\documentclass{amsart}
\usepackage{amsrefs}
\usepackage{url}
\usepackage[colorlinks=true, linkcolor=black, citecolor=black]{hyperref}
\usepackage{amsmath, amssymb, amsthm}
\usepackage[english]{babel}
\usepackage{tikz-cd}

\newtheorem{thm}{Theorem}[section]
\newtheorem{cor}[thm]{Corollary}
\newtheorem{prop}[thm]{Proposition}
\newtheorem{lem}[thm]{Lemma}

\newtheorem{hyp}[thm]{Assumption}
\newtheorem{quest}[thm]{Question}

\title[Random products of birational maps]{Random products of birational maps: equidistribution of preimages of curves}
\author{Arnaud Nerrière}
\date{}
\address{Université Bourgogne Europe, CNRS, IMB UMR 5584, 21000 Dijon, France}
\email{arnaud.nerriere@u-bourgogne.fr}

\begin{document}

\begin{abstract}
    We consider iterated preimages of curves by random products of birational transformations of the plane. Following a recent work of Diller and Roeder, we study 
    the action of the Cremona group on the inverse limit of the spaces of currents in all models of the plane. We show equidistribution for generic finitely supported random walks.
\end{abstract}

\maketitle
\tableofcontents

\section{Introduction}

Let $\mu$ be a finitely supported probability measure on the Cremona group $\mathrm{Bir}(\mathbb{P}^2_\mathbb{C})$ of birational transformations of the plane. We define $\Sigma_\mu$ to be the support of $\mu$ and we set $\Omega:=\Sigma_\mu^\mathbb{N}$. We denote by $f^n_\omega:=f_n \cdots f_1$ the left product of the first $n$ terms of $\omega\in\Omega$ for any $n \geq 1$. Given a curve $\mathcal{C } \subset \mathbb{P}^2$, we are interested in the asymptotic behaviour of the sequence of preimages of $\mathcal{C}$ by $f^n_\omega$. This question is usually understood in the weak sense of currents, that is, we consider the current of integration on $\mathcal{C}$ and we want to know if the sequence of normalized pullbacks $$\frac{1}{\deg(f^n_\omega)}(f^n_\omega)^*\mathcal{C}$$ converges to a positive closed current of bidegree $(1,1)$ on $\mathbb{P}^2$. More generally, we consider the normalized pullbacks of any positive closed $(1,1)$ current on $\mathbb{P}^2$. This is the counterpart in the random setting to the classical equidistribution problem in holomorphic dynamics. 

This question was adressed by Cantat--Dujardin \cite{CD1} and by Filip--Tosatti \cite{FT} for random products of automorphisms on compact Kähler surfaces. In the affine setting, the author \cite{N} proved the existence of Green functions associated to random products of automorphisms of the affine plane. In these two cases, the Green currents (or functions) play an important role for the study of random orbits, in particular in the classification of stationary measures. We consider here the equidistribution problem in the more general framework of products of \emph{iid} birational transformations of $\mathbb{P}^2$.

One major issue comes from the lack of algebraic stability: the degree of $f^n_\omega$ is not equal in general to the product of the degrees of the $f_i$. Equivalently, $f^n_\omega$ may contract a curve to an indeterminacy point of $f_{n+1}$. Unlike for the case of a single birational map \cite{DiF}, we cannot expect to birationally conjugate the dynamics so that it becomes algebraically stable. 

Still, we are able to prove equidistribution of pullbacks of any current for a generic, finitely supported measure $\mu$ on $\mathrm{Bir}(\mathbb{P}^2)$. We denote by $\mathrm{Bir}_d(\mathbb{P}^2)$ the set of birational maps of degree $d$, for any $d\geq 1$. Recall that $\mathrm{Bir}_d(\mathbb{P}^2)$ is a quasiprojective variety \cite[Chapter 5]{La2} and note that $\mathrm{Bir}_1(\mathbb{P}^2)=\mathrm{PGL}_3(\mathbb{C})$. 

\begin{thm}\label{thm1}
    For any $r \geq 2$ and $d_1,\dots, d_r \in \mathbb{N}^\times$ with $\min d_i \geq 2$, there exists a Zariski dense subset $V \subset \mathrm{Bir}_{d_1}(\mathbb{P}^2)\times \cdots \times \mathrm{Bir}_{d_r}(\mathbb{P}^2) $
    such that for any probability measure $\mu$ supported on $\{f_1^{\pm1}, \dots, f_r^{\pm 1}\}$ with $(f_1,\dots,f_r) \in V$, then the following hold.    
    \begin{enumerate}
        \item For $\mu^\mathbb{N}$-almost every $\omega = (f_n)_{n \geq 1}\in \Omega$, there exists a positive closed $(1,1)$ current $T_\omega$ of mass one on $\mathbb{P}^2$ which does not charge curves, such that
        $$\frac{1}{\deg(f^n_\omega)} (f^n_\omega)^*S \longrightarrow T_\omega $$
        for every positive closed $(1,1)$ current $S$ of mass one on $\mathbb{P}^2$.
        \item Moreover, $T_\omega \neq T_{\omega'}$ for $\mu^\mathbb{N} \otimes \mu^\mathbb{N} $-almost every $(\omega,\omega') \in \Omega \times \Omega$.
    \end{enumerate}
\end{thm}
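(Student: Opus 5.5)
The strategy is to lift everything to the Picard–Manin space and the inverse limit of current spaces, following Diller–Roeder. The Cremona group acts by isometries on the infinite-dimensional hyperbolic space $\mathbb{H}^\infty$ inside the Picard–Manin space $\mathcal{Z}(\mathbb{P}^2)$. The first step is to choose the Zariski dense set $V$ so that three properties hold for $(f_1,\dots,f_r)\in V$:
\begin{enumerate}
\item the group $\langle f_1,\dots,f_r\rangle$ is non-elementary, i.e.\ it contains two loxodromic elements with disjoint fixed points on $\partial\mathbb{H}^\infty$;
\item no $f_i^{\pm1}$ contracts a curve onto a point that is eventually mapped into an indeterminacy point of some $f_j^{\pm1}$, except along a set of words of measure zero;
\item indeterminacy points are generic with respect to each other.
\end{enumerate}
The natural candidate is $V=$ the set of tuples for which all indeterminacy and exceptional data avoid countably many proper Zariski closed conditions, indexed by finite words. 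Such a set is Zariski dense because it is the complement of a countable union of proper subvarieties, each shown proper by exhibiting one explicit example (for instance a product of Hénon-type maps composed with generic linear maps). This yields a random walk on $\mathbb{H}^\infty$ to which the theory of random walks on hyperbolic spaces applies (Maher–Tiozzo, Gouëzel). Almost surely, $(f^n_\omega)^*$ applied to the class $e_0$ of a line converges to a boundary point $\xi_\omega\in\partial\mathbb{H}^\infty$, and $\frac{1}{n}\log\deg f^n_\omega\to\lambda>0$. Moreover the exit measure is non-atomic, so $\xi_\omega\neq\xi_{\omega'}$ for almost every pair $(\omega,\omega')$.

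Next, the limit current is constructed in the Diller–Roeder inverse limit $\varprojlim\mathcal{D}(X)$ of positive closed $(1,1)$ currents over all blow-ups $X\to\mathbb{P}^2$. The plan is to write
\[
\frac{1}{\deg f^n_\omega}(f^n_\omega)^*S \;=\; \frac{1}{\deg f^n_\omega}\,(f_1)^*\cdots(f_n)^*S
\]
up to correction terms at the level of $\mathbb{P}^2$. These correction terms are supported on curves contracted by partial compositions and come from the failure of functoriality $(f\circ g)^*\neq g^*f^*$. In the inverse limit, pullback is functorial, so the sequence there is a genuine product. The proof then proceeds in the usual telescoping way. Pick a smooth form $\alpha_\omega$ representing the lifted class, and write
\[
\frac{(f^n_\omega)^*\omega_{FS}}{\deg f^n_\omega} \;=\; \text{class part} \;+\; dd^c g_n,
\]
with potentials $g_n$. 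One then shows that $g_n$ converges in $L^1$. The input is that successive differences are bounded by $C\,\deg f^{n-1}_\omega/\deg f^n_\omega$, up to subexponential factors, which is summable because of the exponential degree growth with positive drift; the subexponential control should come from large deviation estimates for the displacement. The limit $T_\omega$ projects to a current on $\mathbb{P}^2$ whose class is the image of $\xi_\omega$, normalized to mass one.

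Independence from $S$ is handled by the usual $dd^c$ argument. Writing $S=\omega_{FS}+dd^c u$ with $u\le 0$ quasi-psh, one needs $\frac{1}{\deg f^n_\omega}u\circ f^n_\omega\to 0$ in $L^1$. This follows from a volume estimate, namely that sublevel sets of $u$ are small in capacity, together with the fact that the Jacobian of $f^n_\omega$ grows only exponentially in $n$ while the degree grows exponentially at rate $\lambda$, via a Favre–Jonsson/Guedj type argument. The possible mass of $T_\omega$ on curves has to be ruled out: a curve charged by $T_\omega$ would have a class $[C]$ with $\langle[C],\xi_\omega\rangle$ non-generic. Genericity (2) ensures that no curve is eventually periodic under contraction along $\omega$, and the loxodromic nature of the limit point forbids atoms on curves, since the Lelong numbers decay as in Diller–Favre. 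Part (2) follows because distinct classes $\xi_\omega\neq\xi_{\omega'}$ force distinct currents, as the class is recovered from the current in the inverse limit.

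The main obstacle I expect is the lack of algebraic stability. Bounding the correction currents, meaning the mass lost to curves contracted into indeterminacy points, requires showing that along almost every $\omega$ these losses are summable relative to $\deg f^n_\omega$. I would first try to obtain this from the genericity conditions defining $V$, by proving that the probability of a contracted curve hitting an indeterminacy point at time $n$ decays exponentially. If that fails, I would instead work entirely in the Picard–Manin completion, where these losses are automatically encoded, and only push down at the end.
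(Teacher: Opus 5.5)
There is a genuine gap, and it starts with building $T_\omega$ by telescoping.

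\textbf{Why the telescoping fails.} Since $\Sigma_\mu$ contains both $f_i$ and $f_i^{-1}$, almost every $\omega$ has a positive proportion of indices with $f_n=f_{n-1}^{-1}$. At each such index, $f_{n-1}$ contracts its exceptional curves exactly onto the indeterminacy points of $f_n$, and $\deg f^n_\omega=\deg f^{n-2}_\omega$. This has three consequences.
\begin{enumerate}
\item Your condition (2) on $V$ fails for almost every word, whatever $V$ is.
\item The probability of ``hitting an indeterminacy point at time $n$'' is bounded below by $\min_i\mu(f_i)\mu(f_i^{-1})>0$, so it is not exponentially small.
\item On $\mathbb{P}^2$, the correction terms between $(f^n_\omega)^*S$ and $f_1^*\cdots f_n^*S$ carry mass $\prod_{i\le n}\deg f_i-\deg f^n_\omega$. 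This is exponentially larger than $\deg f^n_\omega$, because a positive proportion of letters cancel.
\end{enumerate}
Your increment bound does not rescue this. By submultiplicativity, $\deg f^{n-1}_\omega/\deg f^n_\omega\ge 1/\max_i d_i$, so it is never summable, whatever the drift.

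Working in the inverse limit restores functoriality but not the scheme. The increments $T_{n,\omega}-T_{n-1,\omega}$ have nonzero classes $\theta_n-\theta_{n-1}$, because $f_n^*[L]\notin\mathbb{R}[L]$, so they are not $dd^c$-exact. Moreover, $\theta_\omega$ is in general not a Cartier class, so no smooth form represents it on any model.

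\textbf{What the paper does instead.} It never builds $T_\omega$ as a Cauchy limit. It extracts limit points by compactness (Lemma \ref{lem3}, Proposition \ref{prop77}). All of these lie in $\mathcal{C}_\omega$, the set of positive Weil currents in the class $\theta_\omega$. It then shows that $\mathcal{C}_\omega$ has at most one element. This needs two ideas that are absent from your plan.

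(a) Every $T\in\mathcal{C}_\omega$ is diffuse on every model. Write $T=m_n(f^n_\omega)^*R_n$, where $R_n$ is the normalized push-forward. Then $m_n=\langle\theta_\omega,(f^n_\omega)^*[L]\rangle\to0$ exponentially (Lemma \ref{lem5}, via Gou\"ezel--Karlsson). Meanwhile $\nu((f^n_\omega)^*R_n,q)$ stays bounded at every $q$ outside the countable set of base points, because all base points are proper. Guedj's inequality (Lemma \ref{lem7}), set against $\sum_p\alpha_p(\omega)^2=1$, then forces diffuseness in all models. This use of proper base points, not avoidance of contracted curves, is what genericity is needed for. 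Your appeals to a Diller--Favre decay of Lelong numbers and to a ``non-generic'' pairing $\langle[C],\xi_\omega\rangle$ are not arguments for this step.

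(b) For $\nu$-a.e.\ $\theta$ there is at most one diffuse current in the class $\theta$. The diameter of $\mathcal{C}(\theta)$ shrinks along the skew product $(\omega,\theta)\mapsto(\sigma\omega,\underline{f_1}^*\theta)$. The ingredients are:
\begin{itemize}
\item Proposition \ref{prop7};
\item the volume estimate of Proposition \ref{prop5}, which rests on multiplicativity of degrees along reduced words;
\item Lemma \ref{lem4};
\item Skoda's theorem with an \emph{arbitrarily large} exponent, which is available only after blowing up the finitely many $p$ with $\langle\theta,[E_p]\rangle\ge\epsilon$.
\end{itemize}
Birkhoff's theorem then gives $\mathrm{diam}=0$ almost everywhere.

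\textbf{Independence of $S$.} Point (b) also explains why your route here fails. For a general $S=\omega_{FS}+dd^cu$, the Skoda exponent of $u$ is capped by the Lelong numbers of $S$; when $S$ is a curve, only $\alpha<2$ is allowed, and that is the case in the title. With a fixed exponent, the volume estimate (whose exponent is proportional to $\deg f^n_\omega$) gives only $\|u\circ f^n_\omega\|_{L^1}=O(\deg f^n_\omega)$, not $o(\deg f^n_\omega)$. Comparing an exponentially growing Jacobian with an exponentially growing degree gains nothing. Independence of $S$ has to come from uniqueness in $\mathcal{C}_\omega$.

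Your part (2) is right in spirit. It too needs diffuseness, though, so that $T_{\omega,\mathbb{P}^2}$ determines the Weil current and hence its class.
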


Compare Theorem \ref{thm1}.(2) to \cite[Theorem 6.11]{CD1} and \cite[Theorem 1.3]{N}. This property that limit currents depend non-trivially on the itinerary $\omega$ is an important ingredient for the classification of stationary measures \cite{Roda}.

Remark that we have equidistribution of pullbacks of all currents. One cannot expect this propery to hold for every $\mu$. For example, if $\mu$ is supported on $\text{Aut}(\mathbb{A}^2_\mathbb{C})$, then the current of integration on the line at infinity is invariant.

\subsection{Related results: Equidistribution in random holomorphic dynamics}
The importance of the equidistribution problem in random dynamics comes from the following observation, due to Cantat and Dujardin \cite{CD1}. Consider a random dynamical system $(X,\mu)$ where $X$ is a projective surface and $\Sigma_\mu$ is a finite subset of the automorphism group of $X$. Following Breiman law of large numbers, an important part of the study of the random dynamical system is the classification of $\mu$-stationary measures, that are the Borel probability measures $\nu$ on $X$ that satisfy $\sum \mu(f) f_ *\nu =\nu.$ If a $\mu$-stationary measure $\nu$ is hyperbolic, that is, if its Lyapunov exponents satisfy $\lambda^+>0>\lambda^-$, then the stable manifolds $W^s(\omega,x)$ are almost surely complex one-dimensional submanifolds that are biholomorphic to $\mathbb{C}$. A key ingredient in order to apply the deep results of Brown--Rodriguez Hertz \cite{BRH}, Brown--Eskin--Filip--Rodriguez Hertz \cite{BEFRH} and Roda \cite{Roda} is to show that stable manifolds depend non-trivially on the itinerary $\omega\in \Omega$. To prove this, Cantat and Dujardin first show that the Ahlfors currents associated to the stable manifolds $W^s(\omega,x)$ are equal to the currents obtained by limit of normalized pullbacks of any Kähler form by $f^n_\omega$. Then, they show that these currents depend non-trivially on the itinerary using boundary properties of the induced random walk on the cohomology space $H^{1,1}(X,\mathbb{R})$. 

The author \cite{N} extended these ideas to random dynamical systems $(\mathbb{C}^2,\mu)$ on the affine plane with $\mu$ a finitely supported measure on $\text{Aut}(\mathbb{A}^2_\mathbb{C})$. In this case, the Green functions $$G_\omega:=\lim \frac{1}{\deg(f^n_\omega)} \log^+\Vert f^n_\omega \Vert$$ are well-defined almost surely under the hypothesis that the group generated by $\Sigma_\mu$ contains only loxodromic elements. As above, the Green functions depend non-trivially on the itinerary $\omega$ and this implies rigidity results for stationary measures. The Green functions are related to the equidistribution problem by the following: for $\mu^\mathbb{N}$-almost every $\omega \in \Omega$,  $$\frac{1}{\deg(f^n_\omega)}(f^n_\omega)^*\kappa \longrightarrow  dd^c G_\omega$$
where $\kappa$ denotes any Kähler form on $\mathbb{P}^2$.

\subsection{Related results: Equidistribution in deterministic holomorphic dynamics}
The classical equidistribution problem is of crucial importance in holomorphic dynamics. Brolin's theorem \cite{Br} asserts that for a polynomial map $f: \mathbb{C} \rightarrow \mathbb{C}$ of degree $\deg(f)\geq 2$, the preimages of every point (except at most two) equidistributes towards the equilibrium measure of the Julia set of $f$. Brolin's theorem was generalized to rational maps $f: \mathbb{P}^1 \rightarrow \mathbb{P}^1$ separately by Lyubich \cite{Ly} and Freire--Lopes--Mañe \cite{FLM}. In higher dimensions, pioneering results were obtained by Bedford--Smillie \cite{BS} for Hénon maps and by Fornaess--Sibony \cite{FS} for holomorphic maps of $\mathbb{P}^2$. 

We say that a rational map $f : \mathbb{P}^2 \dashrightarrow \mathbb{P}^2$ is algebraically stable if $\deg(f^n)=\deg(f)^n$ for all $n \geq 1$. Under this hypothesis, Sibony \cite{Si} proved that if $f$ has degree at least two, then, for almost every curve $\mathcal{C} \subset \mathbb{P}^2$, we have equidistribution of the normalized pullbacks of $\mathcal{C}$ by $f^n$ to the so-called Green current of $f$. 

One can also study the action of $f$ by pullback on the cohomology space of any surface $X$ that is obtained by a finite number of blow-ups of $\mathbb{P}^2$. We say that $f$ is algebraically stable on $X$ if $(f^n)^*=(f^*)^n$ on $H^{1,1}(X)$ for all $n\geq 1$ . An important amount of work was made to understand which rational maps admit stable models and to prove equidistribution in this case \cites{DiF, DDG, Birkett}.

Beyond the algebraically stable case, the question of existence of a Green current remains open in general. 
Recently, Diller and Roeder \cite{DR} obtained an equidistribution theorem for toric rational maps that do not admit any stable model. Building on this equidistribution result, they prove the existence of a rational map with transcendental topological entropy \cite{DR2}.

\subsection{Ingredients of the proof}
Following Diller--Roeder \cite{DR}, we define the space of Weil currents $W(\mathbb{P}^2)$ to be the inverse limit of spaces of currents on all surfaces over $\mathbb{P}^2$. See also the work of Xia \cite{Xia}. The space $W(\mathbb{P}^2)$ admits an action of $\mathrm{Bir}(\mathbb{P}^2)$ by pullback and we adress the equidistribution problem in the sense of Weil currents. The analogue space of Weil classes $\overline{W}(\mathbb{P}^2)$ was introduced in the pioneering works of Cantat \cite{Cantat} and Boucksom--Favre--Jonsson \cite{BFJ}. By a theorem of Maher and Tiozzo \cite{MT2}, we have convergence at the level of cohomology: for $\mu^\mathbb{N}$-almost every $\omega \in \Omega$, $$\frac{1}{\deg(f^n_\omega)} (f^n_\omega)^*[L] \longrightarrow \theta_\omega \in \overline{W}(\mathbb{P}^2) $$
where $[L]$ denotes the (Weil) class of a line $L\subset \mathbb{P}^2$. We then have to pass from cohomology to currents. This strategy is classical when one works with a single model $X$ on which the dynamics is algebraically stable. See for example \cites{DDG, DiF}.

Working on all models, we consider the following strategy: consider a Weil current $S \in W(\mathbb{P}^2)$ that is cohomologous to $[L]$. Using compactness properties of the space $W(\mathbb{P}^2)$, it suffices to show that the sequence of normalized pullbacks of $S$ admits at most one limit point. Suppose that all these limit currents do not charge curves in any model. Thus, the limit currents are obtained by taking the trivial extension of their realization on $\mathbb{P}^2$. In this case, the Lelong numbers of these currents can be related to the coefficients of the limit class $\theta_\omega$ in the canonical basis of $\overline{W}(\mathbb{P}^2)$. Then, using weak volume estimates and Skoda's uniform integrability theorem, we can prove that the sequence of normalized pullbacks has at most one limit point.

Unfortunately, we do not know in general if such limit currents do not charge curves. We are able to prove it when $\mu$ is  generic and finitely supported. Note that in this case we have nice properties of the sequence of degrees and we would be able to prove equidistribution by working only on $\mathbb{P}^2$. We prefer to work on all models as it allows us to show that limit currents depend non-trivally on the itinerary.  

We also hope that these ideas may be useful in other contexts. For example, if $f: \mathbb{P}^2 \dashrightarrow \mathbb{P}^2 $ is a rational map with small topological degree $\lambda_1 >d_{top}$, then, by Boucksom--Favre--Jonsson \cite{BFJ}, the convergence at the level of classes is known. Thus, our method gives the following: if the limit of any subsequence of $T_n:=\frac{1}{\lambda_1^n}(f^n)^*S$ do not charge curves in any model, then $(T_n)$ converges. See the discussion at the end of Section \ref{section6}. 

\begin{quest}
   Consider a rational map $f: \mathbb{P}^2 \dashrightarrow \mathbb{P}^2$ with $\lambda_1>d_{top}$. Let $\kappa$ be the Fubini-Study form on $\mathbb{P}^2$, and consider the sequence of normalized pullbacks $$T_n:=\frac{1}{\deg(f^n)} (f^n)^*\kappa.$$
   If $T$ is a limit of a subsequence of $(T_n)$ (in the sense of Weil currents), can we prove that $T$ do not charge curves in any model ?
\end{quest}

\subsection{Organization of the paper}
Section \ref{section2} is devoted to the definition of the spaces of Weil and Cartier currents, that are inverse and direct limits of spaces of currents on all models of $\mathbb{P}^2$. Then, we gather useful results on the behaviour of the random walk on the space of Weil classes, and on the growth of sequence of degrees in Section \ref{section3}. We prove that limit currents do not charge curves for a generic $\mu$ in Section \ref{section4} and Section \ref{section 5}. The proof of Theorem \ref{thm1} occupies Section \ref{section6}.

\subsection{Acknowledgment}
This work was partially funded by the French National Research Agency under the project DynAtrois (ANR-24-CE40-1163), by the EIPHI Graduate School (contract ANR-17-EURE-0002), and by the Région Bourgogne-Franche-Comté. The author would like to thank Marc Abboud, Serge Cantat, Romain Dujardin, Christophe Dupont, Charles Favre, Stéphane Lamy, Andres Quintero Santander and Roland Roeder for interesting discussions related to this paper, and his Ph.D. advisors Johan Taflin and Michele Triestino for their support.

\section{Weil and Cartier currents}\label{section2}
In this section, we define and study the spaces of currents (and classes) in all surfaces over $\mathbb{P}^2$. Our main references are \cite[Chapter 8]{GZ} for basics on pluripotential theory on projective surfaces and \cite{La2} for basics on birational transformations of surfaces. See also \cites{Cantat, DR, BFJ}.

\subsection{Models}
A model $(X,\pi)$ of $\mathbb{P}^2$ is the data of a smooth projective surface $X$ and a birational morphism $\pi : X \rightarrow \mathbb{P}^2$. We say that $(X_1,\pi_1)$ dominates $(X_2, \pi_2)$ if the induced birational map $\pi_2^{-1}\pi_1$ is a morphism, and we say that the two models are equivalent if the induced map is an isomorphism.

\[
\begin{tikzcd}
    X_1   \arrow[rd, "\pi_1", swap] \arrow[rr, "\pi_2^{-1}\pi_1", dashed]  &  &X_2 \arrow[ld, "\pi_2"] \\
    &\mathbb{P}^2&
\end{tikzcd}
\]
We will always identify two equivalent models. A birational morphism $\pi: X \rightarrow \mathbb{P}^2$ can be decomposed as a product of blow-ups, that is, we can write $\pi=\pi_1 \cdots \pi_n$, where each $\pi_i$ is the blow-up of a point in $X_{i-1}$:
$$X=X_n \overset{\pi_n}{\longrightarrow} X_{n-1} \longrightarrow \dots \; X_1 \overset{\pi_1}{\longrightarrow} X_0:=\mathbb{P}^2. $$
If $(X_1,\pi_1)$ and $(X_2,\pi_2)$ are two models, then there exists a third one $(Y,\pi_Y)=(Y,\pi_1 \alpha)=(Y,\pi_2\beta)$ that dominates both: 
\[ 
\begin{tikzcd}
    & \arrow[ld, "\alpha", swap] Y \arrow[dd, "\pi_Y"] \arrow[rd, "\beta"] &\\
    X_1 \arrow[rd,"\pi_1", swap] & & X_2 \arrow[ld, "\pi_2"]\\
    & \mathbb{P}^2. &
\end{tikzcd}
\]

\subsection{Weil currents}
Consider a model $(X,\pi)$ of $\mathbb{P}^2$. We denote by $\mathcal{D}^+(X,\pi)$ the set of positive closed currents of bidegree $(1,1)$ on $X$. Note that this set depends only on $X$, but we keep the reference to the birational morphism $\pi$ for further use. We denote also by $\mathcal{D}(X,\pi)$ the space of currents that can be written as a difference of two elements of $\mathcal{D}^+(X,\pi)$. A sequence $(T_n)$ in $\mathcal{D}(X,\pi)$ converges to a current $T=T^+-T^-$ if and only if we can write $T_n=T_n^+-T_n^-$ with $T_n^+, T_n^- \in \mathcal{D}^+(X,\pi)$ and $T_n^+ \rightarrow T^+$, $T_n^- \rightarrow T^-$ for the weak topology of currents.

If $(X_1,\pi_1)$ dominates $(X_2,\pi_2)$, then we write $\pi_1=\pi_2\phi_{X_1,X_2}$ with a birational morphism $\phi_{X_1,X_2}: X_1\rightarrow X_2$. We can define the pushforward map 
$$(\phi_{X_1,X_2})_*:\mathcal{D}(X_1,\pi_1) \longrightarrow \mathcal{D}(X_2,\pi_2).$$
Remark that $(\phi_{X_1,X_2})_*$ preserves positivity, and if $(X_3,\pi_3)\leq (X_2, \pi_2) \leq (X_1,\pi_1)$, then we have $$(\phi_{X_1,X_3})_*=(\phi_{X_2,X_3})_* (\phi_{X_1,X_2})_*.$$

Thus, the spaces $\mathcal{D}(X,\pi)$ with associated pushforward maps form a projective system and we can define the space of \textbf{Weil currents} by $$W(\mathbb{P}^2):=\lim_{\longleftarrow} \mathcal{D}(X,\pi).$$
More concretely, a Weil current $T\in W(\mathbb{P}^2)$ is a family of currents $T=(T_{X,\pi})$ indexed by all models of $\mathbb{P}^2$, which is compatible with pushforward: if $(X_1,\pi_1)$ dominates $(X_2,\pi_2)$, then we have $$(\phi_{X_1,X_2})_ * T_{X_1,\pi_1}=T_{X_2,\pi_2}.$$
We say that $T_{X,\pi} $ is the \textbf{realization} of $T$ on $(X,\pi)$. We define also $$W^+(\mathbb{P}^2):=\lim_{\longleftarrow} \mathcal{D}^+(X,\pi)$$ to be the convex cone of positive Weil currents. We endow $W(\mathbb{P}^2)$ with the product topology. 

\subsection{Cartier currents}
Consider a birational morphism $\phi:X_1 \rightarrow X_2$  between two models of $\mathbb{P}^2$. We can define the pullback of a positive closed current on $X_2$ of bidegree $(1,1)$ by $\phi$ as follows. We  write $T=\eta + dd^cu$ with $\eta$ a smooth $(1,1)$ form and $u$ a quasi-psh function, and we set $$\phi^* T:=\phi^*\eta +dd^c(u \circ \phi).$$
For any $T=T^+-T^- \in \mathcal{D}(X,\pi),$ we set $\phi^*T:=\phi^*T^+ -\phi^*T^-.$ If $(X_1,\pi_1) $ dominates  $(X_2,\pi_2)$, then we write $\pi_1=\pi_2 \phi_{X_1,X_2}$, and  we have a well-defined pullback map
$$\phi_{X_1,X_2}^*: \mathcal{D}(X_2, \pi_2) \longrightarrow \mathcal{D}(X_1,\pi_1),$$
which preserves positivity. We have the following compatibility of the pullback maps: if $(X_3,\pi_3)\leq (X_2, \pi_2) \leq (X_1,\pi_1)$, then we have $$(\phi_{X_1,X_3})^*=(\phi_{X_1,X_2})^* (\phi_{X_2,X_3})^*.$$

Thus, the spaces $\mathcal{D}(X,\pi)$ with associated pullback maps is a direct system and we can define the space of \textbf{Cartier currents} by 
$$C(\mathbb{P}^2):=\lim_{\longrightarrow} \mathcal{D}(X,\pi).$$
More concretely, a Cartier current is the data of a model $(X,\pi)$ and a current in $\mathcal{D}(X,\pi)$ up to the following equivalence: we identify $T_1 \in \mathcal{D}(X_1,\pi_1)$ and $T_2 \in \mathcal{D}(X_2,\pi_2)$ if we have $\alpha^*T_1=\beta^*T_2$ on any model $Y$ that dominates $X_1$ and $X_2$.
\[ 
\begin{tikzcd}
    & \arrow[ld, "\alpha", swap] Y \arrow[dd, "\pi_Y"] \arrow[rd, "\beta"] &\\
    X_1 \arrow[rd,"\pi_1", swap] & & X_2 \arrow[ld, "\pi_2"]\\
    & \mathbb{P}^2 &
\end{tikzcd}
\]
Similarly, we define $$C^+(\mathbb{P}^2):=\lim_{\longrightarrow} \mathcal{D}^+(X,\pi)$$
to be the convex cone of positive Cartier currents.
We say that a Cartier current $T$ can be realized on a model $(X,\pi)$ if $T$ is equal in $C(\mathbb{P}^2)$ to a current in $\mathcal{D}(X,\pi)$. 

We can embed $C(\mathbb{P}^2)$ as a subspace of $W(\mathbb{P}^2)$: if $T$ is a Cartier current that can be realized on some model $(X_1,\pi_1)$, then we define the realization of $T$ on $(X_2,\pi_2)$ by $$T_{X_2,\pi_2}:=\beta_*\alpha^*T$$
with $\alpha,\beta$ and $Y$ as above.
We can check that it gives a Weil current, and that it does not depend on the choice of $X_1$ and $Y$. We endow $C(\mathbb{P}^2) $ with the subspace topology, and we remark that $C(\mathbb{P}^2)$ is dense in $W(\mathbb{P}^2)$.

\subsection{Strict transform}
Consider a positive closed current  $T_{\mathbb{P}^2}\in \mathcal{D}^+(\mathbb{P}^2)$ and a model $(X,\pi)$ of $\mathbb{P}^2$. We denote by $E$ the exceptional divisor of $\pi$. By Skoda--El Mir theorem \cite[Theorem 7.26]{GZ}, the pullback of $T_{\mathbb{P}^2}$ by the restriction of $\pi$ to $X\setminus E$ can be extended to a positive closed current on $X$. We call it the trivial extension, or the strict transform of $T_{\mathbb{P}^2}$ on $(X,\pi)$. We define the strict transform of a difference of positive closed currents to be the difference of the strict transforms. Finally, we define the \textbf{strict transform} of the current $T_{\mathbb{P}^2} \in \mathcal{D}(\mathbb{P}^2)$ to be the Weil current $\widetilde{T_{\mathbb{P}^2}} \in W(\mathbb{P}^2)$ given by the strict transform of $T_{\mathbb{P}^2}$ in any model. 

If $T=(T_{X,\pi})\in W(\mathbb{P}^2)$ is a Weil current, then by Siu's theorem \cite[Theorem 7.27]{GZ}, the realization of $T$ on $(X,\pi)$ differs from the strict transform of its realization on $\mathbb{P}^2$ by a current of integration  on the exceptional divisor of $(X,\pi)$. Thus, $T$ is a strict transform if and only if $T_{X,\pi}$ does not charge the exceptional divisor in any model $(X,\pi)$.

Note that a current $T_{\mathbb{P}^2}\in \mathcal{D}(\mathbb{P}^2)$ also defines a Cartier current which differs in general from the strict transform $\widetilde{T_{\mathbb{P}^2}}$. For example, if $T_{\mathbb{P}^2}=L$ is the current of integration on a line $L$, and if $(X,\pi)$ is the blow-up of a point in $L$, then we have $$\pi^*L=\widetilde{L}+E$$
where $E$ is the current of integration on the exceptional divisor of $\pi$. Thus, the realization on $(X,\pi)$ of the Cartier current defined by $L$ is not equal to $\widetilde{L}$. 

\subsection{Weil and Cartier classes}
We have a surjective map $\mathcal{D}(X,\pi) \longrightarrow H^{1,1}_\mathbb{R}(X,\pi)$ for any model,
which associates to a closed current $T_{X,\pi}$ its cohomology class $[T_{X,\pi}]$. By the $dd^c$-lemma \cite[Lemma 7.31]{GZ}, two currents $T, S$ are cohomologous if and only if we can write $$T-S=dd^cu$$
where $u$ is a $L^1$ function on $X$, given locally as a difference of plurisubharmonic functions. The pullback and the pushforward maps preserve the spaces of exact currents.
Thus, we can define the space of \textbf{Weil classes} $$\overline{W}(\mathbb{P}^2):=\lim_{\longleftarrow} H^{1,1}_\mathbb{R}(X,\pi),$$ 
and the space of \textbf{Cartier classes} 
$$\overline{C}(\mathbb{P}^2):=\lim_{\longrightarrow} H^{1,1}_\mathbb{R}(X,\pi).$$
We endow $\overline{W}(\mathbb{P}^2)$ with the product topology. We have continuous projections $W(\mathbb{P}^2) \rightarrow \overline{W}(\mathbb{P}^2)$ and $C(\mathbb{P}^2) \rightarrow \overline{C}(\mathbb{P}^2)$ which associate to a Weil (or Cartier) current its cohomology class. 

\subsection{Notation}\label{section2.6}
Consider a $\mathbb{R}$-divisor $D$ in a model $(X,\pi)$. We use the same notation $D$ for the current of integration on $D$, and also for the Cartier current that it defines. We denote by $[D]\in \overline{C}(\mathbb{P}^2)$ its cohomology class. The only important distinction is for the strict transform: we always keep the notation $\widetilde{D}$, since it differs in general from $D$.

\subsection{Canonical basis}

Consider a triple $(X,\pi,p) $ where $(X,\pi)$ is a model of $\mathbb{P}^2$ and $p$ is a point of $X$. Two such triples $(X_1, \pi_1, p_1)$ and $(X_2,\pi_2,p_2)$ are said to be equivalent if $\pi_2^{-1}\pi_1$ is a local isomorphism sending $p_1$ to $p_2$. We denote by $\mathcal{B}(\mathbb{P}^2)$ the quotient space. The order of a point $p \in \mathcal{B}(\mathbb{P}^2)$ is the minimal number of blowups in $(X,\pi)$ such that $p\in X$. See \cite[Section 6]{La2}.

Given $p\in \mathcal{B}(\mathbb{P}^2)$, we take a model $(X,\pi)$ containing $p$, and we denote by $E_p$ the Cartier current defined by the exceptional divisor of the blow-up of $p$. We can check that it does not depend on the choice of $(X,\pi)$.

Let $\pi: X \rightarrow \mathbb{P}^2$ be a model. We decompose $\pi$ as a product of blow-ups of the points $p_1, \dots ,p_n \in \mathcal{B}(\mathbb{P}^2)$. 
Then, we have  $$H^{1,1}_\mathbb{R}(X,\pi)=\text{Vect}_\mathbb{R}([L], [E_{p_1}], \dots ,[E_{p_n}]).$$
Here, $E_{p_i}$ denotes the realization of the Cartier current $E_{p_i}$ on $(X,\pi)$, which is the pullback on $X$ of the exceptional divisor obtained by the blow-up of $p_i$. We obtain the following: $$\overline{C}(\mathbb{P}^2)=\mathbb{R} [L]\oplus \bigoplus_{p \in \mathcal{B}(\mathbb{P}^2)} \mathbb{R}[E_p].$$
We say that $([L],[E_p])_{p \in \mathcal{B}(\mathbb{P}^2)}$ is the canonical basis of $\overline{C}(\mathbb{P}^2)$. We have also 
$$\overline{W}(\mathbb{P}^2)=\mathbb{R} [L]\oplus \prod_{p \in \mathcal{B}(\mathbb{P}^2)} \mathbb{R}[E_p].$$
We write $$c=a_L [L]+\sum_{p\in \mathcal{B}(\mathbb{P}^2)} a_p [E_p]$$
for a Weil class $c\in \overline{W}(\mathbb{P}^2)$. This means that the realization of $c$ on $(X,\pi)$ is given by $$c_{X,\pi}=a_L [L] +\sum_{p \in \mathrm{base}(\pi)} a_p[E_p]$$
where the \textbf{base points} of $\pi$ are the points of $\mathcal{B}(\mathbb{P}^2)$ blown-up by $\pi$. 

Note that Siu's theorem implies a similar result for currents, that is
$$C(\mathbb{P}^2)=\mathcal{D}(\mathbb{P}^2) \oplus \bigoplus_{p \in \mathcal{B}(\mathbb{P}^2)} \mathbb{R}E_p $$
and 
$$W(\mathbb{P}^2)=\mathcal{D}(\mathbb{P}^2) \oplus \prod_{p \in \mathcal{B}(\mathbb{P}^2)} \mathbb{R}E_p. $$

\subsection{Intersection form}
Each cohomology space $H^{1,1}_\mathbb{R}(X,\pi)$ is endowed with an intersection form $\langle , \rangle_{X,\pi}$ which is non-degenerate and of Minkowski type. Moreover, they satisfy the following compatibility condition \cite[Proposition 1.109]{La2}: $$\langle \phi^*c,d\rangle_{X_1,\pi_1}=\langle c,\phi_*d\rangle_{X_2,\pi_2}$$
when $\pi_1=\pi_2\phi$ with $\phi$ a birational morphism. As a special case, we remark that $$\langle \phi^*c, [E_p] \rangle =0$$ when $E_p$ is contracted by $\phi$. We thus have a well-defined intersection form on $\overline{C}(\mathbb{P}^2)$. The canonical basis is an orthonormal basis for $\langle ,\rangle$ and we have $$c^2=a_L^2 - \sum_{p \in \mathcal{B}(\mathbb{P}^2)} a_p^2$$
for any Cartier class $c\in \overline{C}(\mathbb{P}^2)$. 

We say that a class $c \in H^{1,1}(X,\pi)$ is \textbf{nef} if $\langle c, [D] \rangle \geq 0 $ for every effective divisor $D$ on $X$. Similarly, we say that a Weil class is nef it its realization in every model is nef. We have the following push-pull formula.

\begin{lem}\label{lem6}
    If $\pi: X \rightarrow Y$ is a birational morphism between two models of $\mathbb{P}^2$, and $T \in \mathcal{D}(X)$, then we have
    $$\pi^*\pi_*T=T+\sum_{p \in \emph{base}(\pi)} \langle [T] , [E_p]\rangle E_p$$
\end{lem}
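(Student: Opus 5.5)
The plan is to show that the difference $S:=\pi^*\pi_*T-T$ is a combination of the exceptional currents $E_p$, and then to find its coefficients by intersecting with the classes $[E_q]$. Both sides of the formula are linear in $T$, so I will assume throughout that $T\in\mathcal{D}^+(X)$ is positive and closed.

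First I show that $S$ is supported on the exceptional set. Let $\mathrm{Exc}(\pi)\subset X$ be the exceptional set of $\pi$, i.e. the union of the curves contracted by $\pi$. The map $\pi$ restricts to a biholomorphism $X\setminus \mathrm{Exc}(\pi)\to Y\setminus \pi(\mathrm{Exc}(\pi))$, and $\pi(\mathrm{Exc}(\pi))$ is a finite set. To compute $\pi^*\pi_*T$ I write $\pi_*T=\eta+dd^cu$ locally and take $\pi^*\eta+dd^c(u\circ\pi)$. On $X\setminus\mathrm{Exc}(\pi)$ this agrees with $T$, because there $\pi$ is an isomorphism and the pushforward is simply transport by this isomorphism. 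Hence $S$ is a closed real current of bidegree $(1,1)$ and of order zero, being a difference of positive closed currents. Its support lies in the curve $\mathrm{Exc}(\pi)$.

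Next I identify $S$ using the support theorem. A closed current of order zero and bidimension $(1,1)$ supported on a curve is a real linear combination of the integration currents on the irreducible components of that curve; this follows from Federer's support theorem, or from Siu's theorem applied to $T^\pm$. The components of $\mathrm{Exc}(\pi)$ are the strict transforms of the exceptional divisors of the successive blow-ups in a decomposition $\pi=\pi_1\cdots\pi_n$ at the base points $p_1,\dots,p_n$. The total transforms $E_{p_i}$, meaning the realizations on $X$ of the Cartier currents $E_{p_i}$, are related to these components by a unitriangular change of basis. So I can write
\[
S=\sum_{p\in\mathrm{base}(\pi)} b_p E_p, \qquad b_p\in\mathbb{R}.
\]

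It remains to compute the coefficients $b_q$. I pass to cohomology and pair the classes with $[E_q]$ for each $q\in\mathrm{base}(\pi)$. Since $E_q$ is contracted by $\pi$, the projection formula recalled before the lemma gives
\[
\langle \pi^*[\pi_*T],[E_q]\rangle=\langle [\pi_*T],\pi_*[E_q]\rangle=0 .
\]
The $[E_p]$ are orthonormal with $[E_p]^2=-1$, so pairing $[S]=\sum b_p[E_p]$ with $[E_q]$ gives $-b_q$. Therefore
\[
0=\langle[T],[E_q]\rangle+\langle [S],[E_q]\rangle=\langle[T],[E_q]\rangle-b_q ,
\]
which gives $b_q=\langle [T],[E_q]\rangle$. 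This is exactly the stated formula.

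The main obstacle is the identification step: justifying that $S$ is a linear combination of the exceptional curves, and that these are spanned by the $E_p$. The pairing computation is routine once the orthonormality and projection formulas recalled earlier are in place. As a sanity check, for a single blow-up and $T=E$ the left-hand side is $0$, and the right-hand side is $E+\langle E,E\rangle E=E-E=0$.
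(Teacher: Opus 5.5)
Your proposal is correct and follows the paper's proof. The paper likewise shows that $\pi^*\pi_*T-T$ is supported on the exceptional divisor via Siu's theorem, writes it as $\sum\lambda_p E_p$, and finds $\lambda_p=\langle[T],[E_p]\rangle$ by intersecting with $[E_p]$, using $\langle\pi^*c,[E_p]\rangle=0$ and $[E_p]\cdot[E_q]=-\delta_{pq}$. You add details the paper leaves implicit, such as the support theorem and the unitriangular change of basis between the strict transforms of the exceptional curves and the total transforms $E_p$.
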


\begin{proof}
    Siu's theorem implies that the difference $\pi^*\pi_*T-T$ is a current of integration on the exceptional divisor of $\pi$. Thus, we can write $\pi^*\pi_*T -T=\sum \lambda_j E_{p_j}$ where the $p_j$ are the points blown-up by $\pi$. We intersect this equality with the class $[E_{p_j}]$ and we obtain $\lambda_j=\langle [T],[E_{p_j}]\rangle$.
\end{proof}

\subsection{The Picard-Manin space}
We endow the space $\overline{C}(\mathbb{P}^2)$ with the Euclidean norm in the canonical basis, that is,  $$\Vert c \Vert^2:= a_L^2+\sum_{p \in \mathcal{B}(\mathbb{P}^2) } a_ p^2.$$
We define the \textbf{Picard-Manin space} to be the completion of  $(\overline{C}(\mathbb{P}^2), \Vert . \Vert )$ and we denote it by $l^2(\mathcal{B}(\mathbb{P}^2))$. More concretely, a Weil class $$c=a_L[L]+\sum_{p \in \mathcal{B}(\mathbb{P}^2) } a_p [E_p]$$
is in $l^2(\mathcal{B}(\mathbb{P}^2))$ if and only if $$\sum_{p \in \mathcal{B}(\mathbb{P}^2) } a_p^2 <+\infty.$$
The intersection form extends to the Picard-Manin space.

\subsection{Lelong numbers}
One key observation is that we are able to understand the singularities of a $(1,1)$ current on $\mathbb{P}^2$ by looking at the class of its strict transform in the Picard-Manin space. For any $p \in \mathcal{B}(\mathbb{P}^2)$ and $T \in W(\mathbb{P}^2)$, we define the Lelong number of $T$ at $p$ to be $$\nu(T,p):=\nu(T_{X,\pi},p)$$
whenever $p$ is defined on $(X,\pi)$. It is well-defined since a local isomorphism preserves the Lelong number. The following lemma is again a consequence of Siu's theorem \cite[Theorem 7.27]{GZ}. See \cite{Guedj} for a proof.

\begin{lem}\label{lem8}
    If $\phi: X \rightarrow Y$ is the blow-up of a point $p$ in a model $(Y,\pi)$ and $T_{Y}$ is a positive closed current of bidegree $(1,1)$ on $Y$, then we have
$$\phi^*T_{Y}=\widetilde{T_{Y}}+\nu(T_{Y},p)E_p.$$
\end{lem}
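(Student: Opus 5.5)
The plan is to compare $\phi^*T_Y$ and the strict transform $\widetilde{T_Y}$ using Siu's decomposition, and to identify the coefficient of $E_p$ through Lelong numbers. This is local near $p$, since $\phi$ is an isomorphism away from $E_p$. Both $\phi^*T_Y$ and $\widetilde{T_Y}$ agree on $X\setminus E_p$ with $(\phi|_{X\setminus E_p})^*T_Y$. Their difference is therefore a closed current of order zero supported on the irreducible curve $E_p$. By the support theorem, or equivalently by Siu's decomposition applied to $\phi^*T_Y$, this difference equals $c\,E_p$ for some constant $c$. Moreover, $c\ge 0$ is the generic Lelong number of $\phi^*T_Y$ along $E_p$, because $\widetilde{T_Y}$ puts no mass on $E_p$ by construction (Skoda--El Mir trivial extension). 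It remains to show that $c=\nu(T_Y,p)$.

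To compute $c$, I would work in local coordinates $z=(z_1,z_2)$ centered at $p$ and write $T_Y=dd^c u$ with $u$ psh on a ball $B$ around $p$. Then $\phi^*T_Y=dd^c(u\circ\phi)$ on $\phi^{-1}(B)$. Take the chart $(s,t)\mapsto (s,st)$ of the blow-up, in which $E_p=\{s=0\}$. The generic Lelong number of $u\circ\phi$ along $E_p$ is $\liminf_{s\to 0}\,u(s,st)/\log|s|$ for generic $t$. The main step is to show this equals $\nu(u,0)$.

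For the lower bound, the definition of the Lelong number gives $u(z)\le \nu\log|z|+O(1)$, so $u(s,st)\le \nu\log|s|+O(1)$ for bounded $t$. Hence $u\circ\phi-\nu\log|s|$ is bounded above near $E_p$. It follows that $dd^c(u\circ\phi)-\nu[E_p]$ is positive, so $c\ge \nu$.

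For the upper bound, I would use Kiselman's characterization: $\nu(u,0)=\lim_{r\to0}\sup_{|z|=r}u/\log r$. I would then note that $\sup_{|z|= r}u$ is comparable to the supremum of $u\circ\phi$ over a set of the form $\{|s|\approx r\}\times\{|t|\le 1\}$, together with the other chart. Since $u\circ\phi - c\log|s|$ is psh near $E_p$ by Siu's decomposition, this supremum is at most $c\log r+O(1)$, which gives $\nu\ge c$. I expect the matching of the Lelong number at a point with the generic Lelong number along $E_p$ to be the only delicate point. It is the classical fact that blowing up a point turns the Lelong number at that point into the generic Lelong number along the exceptional divisor, and one can cite \cite{Guedj} for it. Everything else is a formal consequence of Siu's theorem \cite[Theorem 7.27]{GZ}.
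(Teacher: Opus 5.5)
Your argument is correct and follows the route the paper indicates. The paper gives no proof beyond invoking Siu's decomposition and citing \cite{Guedj} for the identification of the coefficient of $E_p$, and your two bounds supply exactly that classical identification: $u\le \nu(T_Y,p)\log|z|+O(1)$ gives $c\ge \nu(T_Y,p)$, and the sup characterization of the Lelong number combined with the plurisubharmonicity of $u\circ\phi-c\log|s|$ gives $c\le\nu(T_Y,p)$. The slice formula $\liminf_{s\to0}u(s,st)/\log|s|$ that you mention is not needed, and it is only valid for $t$ outside an exceptional set that need not be countable; your two bounds already determine $c$.
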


Applying Lemma \ref{lem8} to $T_{Y}=\widetilde{T_{\mathbb{P}^2}}$ yields the following.

\begin{lem}\label{lem1}
    Let $T_{\mathbb{P}^2}$ be a positive closed $(1,1)$ current on $\mathbb{P}^2$. Then, for every $p \in \mathcal{B}(\mathbb{P}^2)$, we have $$\nu(\widetilde{T_{\mathbb{P}^2}}, p)=\langle[\widetilde{T_{\mathbb{P}^2}}],[E_p]\rangle.$$
\end{lem}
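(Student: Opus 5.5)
Fix $p\in\mathcal{B}(\mathbb{P}^2)$. The plan is to realize everything on a model where $p$ is a genuine point, apply Lemma \ref{lem8} to one further blow-up, and then read off the Lelong number by intersecting with $[E_p]$, as in the proof of Lemma \ref{lem6}.

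Choose a model $(Y,\pi_Y)$ containing $p$, and let $\phi: X\to Y$ be the blow-up of $p$. Put $T_Y:=\widetilde{T_{\mathbb{P}^2}}_{Y,\pi_Y}$ and $T_X:=\widetilde{T_{\mathbb{P}^2}}_{X,\pi_Y\phi}$ for the realizations of the strict transform. By definition, $\nu(\widetilde{T_{\mathbb{P}^2}},p)=\nu(T_Y,p)$. Applying Lemma \ref{lem8} to $T_Y$ gives
$$\phi^*T_Y=\widetilde{T_Y}+\nu(T_Y,p)E_p,$$
where $\widetilde{T_Y}$ is the trivial extension of $T_Y$ across the exceptional curve of $\phi$. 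The first step is to check that $\widetilde{T_Y}=T_X$. Both currents are trivial extensions, from the complement of the full exceptional divisor of $\pi_Y\phi$, of the same current $(\pi_Y\phi)^*T_{\mathbb{P}^2}$. Indeed, $T_Y$ itself puts no mass on the exceptional divisor of $\pi_Y$, so trivially extending it once more across $E_p$ produces the trivial extension of $T_{\mathbb{P}^2}$ on $X$.

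Next, pass to cohomology on $X$ and intersect with $[E_p]$. Since $\langle \phi^*[T_Y],[E_p]\rangle=0$ (because $E_p$ is contracted by $\phi$) and $[E_p]^2=-1$, we obtain
$$0=\langle [T_X],[E_p]\rangle-\nu(T_Y,p).$$
Hence $\nu(\widetilde{T_{\mathbb{P}^2}},p)=\langle[T_X],[E_p]\rangle$. Here $[E_p]$ means the class on $X$ of the exceptional curve of $\phi$, which coincides with the Cartier class $E_p$ realized on $X$.

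Finally, the right-hand side $\langle[T_X],[E_p]\rangle$ is exactly $\langle[\widetilde{T_{\mathbb{P}^2}}],[E_p]\rangle$ computed in $\overline{W}(\mathbb{P}^2)$. The reason is that pairing a Weil class with the Cartier class $[E_p]$ can be computed on any model where $E_p$ is realized; by the compatibility $\langle\phi^*c,d\rangle=\langle c,\phi_*d\rangle$, the result is the same on any higher model. The main care needed is in the first step, namely identifying the strict transform of $T_Y$ with the realization of $\widetilde{T_{\mathbb{P}^2}}$ on $X$. This follows from uniqueness of trivial extensions, via the Skoda–El Mir theorem.
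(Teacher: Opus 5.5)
Your proposal is correct and is the paper's argument: the paper proves this in one line by applying Lemma \ref{lem8} to the realization of $\widetilde{T_{\mathbb{P}^2}}$ on a model containing $p$, and you then intersect with $[E_p]$ using $[E_p]^2=-1$ and $\langle \phi^*c,[E_p]\rangle=0$. You also make explicit two points the paper leaves implicit: the strict transform of $T_Y$ equals the realization of $\widetilde{T_{\mathbb{P}^2}}$ on $X$, and the pairing with the Cartier class $[E_p]$ can be computed on any model where $E_p$ is realized.
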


The next lemma is due to Guedj \cite{Guedj} and it will be a key ingredient for the proof of Theorem \ref{thm1}. See also \cite[Theorem 7.1]{FJ} for a stronger version.

\begin{lem}[Guedj]\label{lem7}
    Let $T_{\mathbb{P}^2}$ be a positive closed $(1,1)$ current on $\mathbb{P}^2$ which does not charge curves. Then, we have $$\sum_{p \in \mathcal{B}(\mathbb{P}^2)} \nu(\widetilde{T_{\mathbb{P}^2}},p)^2 \leq [T_{\mathbb{P}^2}]^2.$$
\end{lem}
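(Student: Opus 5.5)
The inequality follows from expressing the class of the strict transform $\widetilde{T_{\mathbb{P}^2}}$ in the canonical basis and showing that its self-intersection is nonnegative. Write $a_L := \langle [T_{\mathbb{P}^2}],[L]\rangle$. Then $[T_{\mathbb{P}^2}] = a_L[L]$ in $H^{1,1}(\mathbb{P}^2)$, and $[T_{\mathbb{P}^2}]^2 = a_L^2$. On any model $(X,\pi)$, Lemma \ref{lem6} (or iterating Lemma \ref{lem8}) gives
\[
[\widetilde{T_{\mathbb{P}^2}}]_{X,\pi} = a_L[L] + \sum_{p\in \mathrm{base}(\pi)} a_p [E_p]
\]
for some coefficients $a_p$. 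Since the canonical basis is orthonormal for the Minkowski form with $[E_p]^2=-1$, we have $\langle [\widetilde{T_{\mathbb{P}^2}}],[E_p]\rangle = -a_p$. By Lemma \ref{lem1} this equals $\nu(\widetilde{T_{\mathbb{P}^2}},p)$, so $a_p = -\nu(\widetilde{T_{\mathbb{P}^2}},p)$. Consequently
\[
\bigl([\widetilde{T_{\mathbb{P}^2}}]_{X,\pi}\bigr)^2 = [T_{\mathbb{P}^2}]^2 - \sum_{p\in\mathrm{base}(\pi)} \nu(\widetilde{T_{\mathbb{P}^2}},p)^2 .
\]
Since every finite set of points of $\mathcal{B}(\mathbb{P}^2)$ is contained in the base of some model, it suffices to prove $\bigl([\widetilde{T_{\mathbb{P}^2}}]_{X,\pi}\bigr)^2 \ge 0$ for every model, and then take the supremum over finite subsets.

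To get nonnegativity, I would show that the class $\alpha := [\widetilde{T_{\mathbb{P}^2}}]_{X,\pi}$ is nef on $X$; a nef class on a surface has $\alpha^2\ge 0$, being a limit of ample classes. The nef property is where the hypothesis enters. Let $S := \widetilde{T_{\mathbb{P}^2}}_{X,\pi}$, the trivial extension, and let $C\subset X$ be an irreducible curve; we need $\langle \alpha,[C]\rangle\ge 0$.

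First, $S$ does not charge $C$. If $C$ is not exceptional, then $\pi(C)$ is a curve, and $T_{\mathbb{P}^2}$ does not charge it by hypothesis. If $C$ is exceptional, then $S$ is by construction the trivial extension, which puts no mass on the exceptional divisor. Hence the Siu decomposition of $S$ has no component along $C$, and the generic Lelong number of $S$ along $C$ is zero.

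The standard fact that a positive closed current with zero generic Lelong number along $C$ has nonnegative intersection with $C$ then gives $\langle \alpha,[C]\rangle\ge 0$. To prove it, regularize $S$ via Demailly's theorem into smooth forms $S_\varepsilon$ with $S_\varepsilon \ge -\varepsilon\omega - \nu(S,\cdot)\,(\text{correction})$, and restrict to $C$. Since $\nu(S,x)=0$ at all but countably many points of $C$, restricting $S_\varepsilon$ to $C$ yields $\int_C S_\varepsilon \ge -\varepsilon \int_C\omega + o(1)$. Alternatively, and more simply, write $S=\theta+dd^c u$ with $u$ quasi-psh and $u|_C\not\equiv-\infty$ (which is exactly what zero generic Lelong number gives), so that $\theta|_C + dd^c(u|_C)$ is a positive measure on $C$ of total mass $\langle\alpha,[C]\rangle$.

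The main obstacle is this last step: justifying that $u|_C\not\equiv -\infty$ and that the restricted current is positive with the correct mass. I would handle it with the quasi-psh potential argument, using that a positive closed current not charging $C$ has a potential which is not identically $-\infty$ on $C$. Strictly this needs zero generic Lelong number, which follows from Siu's theorem, since the set of points with Lelong number at least $c>0$ is analytic and $S$ does not contain $C$ in its divisorial part. With nefness established, $\alpha^2\ge 0$ on every model, and letting the model grow yields the stated inequality.
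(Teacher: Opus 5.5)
Your reduction is correct and follows the standard route to this lemma; the paper does not reprove it and only cites Guedj. By Lemma \ref{lem1}, the class $\alpha=[\widetilde{T_{\mathbb{P}^2}}]_{X,\pi}$ equals $a_L[L]-\sum_{p\in\mathrm{base}(\pi)}\nu(\widetilde{T_{\mathbb{P}^2}},p)[E_p]$, so everything reduces to $\alpha^2\ge 0$ on every model, i.e.\ to nefness of $\alpha$. Your check that $S=\widetilde{T_{\mathbb{P}^2}}_{X,\pi}$ charges no curve, and hence has zero generic Lelong number along every curve, is fine.

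\textbf{The gap.} It lies in the argument you finally commit to for $\langle\alpha,[C]\rangle\ge 0$. Zero generic Lelong number along $C$ does \emph{not} imply that a potential is $\not\equiv-\infty$ on $C$. Here is a counterexample on $\mathbb{P}^2$. Let $\phi=\log(|z_2|/\|z\|)\le 0$, so that $\kappa+dd^c\phi$ is the current of integration on the line $\{z_2=0\}$, and put $u=-\log(1-\phi)$. Since $t\mapsto-\log(1-t)$ is convex and increasing with derivative in $(0,1]$ on $(-\infty,0]$, $u$ is $\kappa$-psh. Near the line, $u$ behaves like $-\log(-\log|w|)$ for a local equation $w$ of the line. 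So every Lelong number of $\kappa+dd^c u$ vanishes and this current charges no curve. Yet $u\equiv-\infty$ on $\{z_2=0\}$, and the same holds for any potential of this current with respect to any smooth form, since such potentials differ from $u$ by smooth functions. These currents satisfy the hypothesis of the lemma, so restricting potentials to $C$ cannot work in general.

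\textbf{The fix.} The route you sketched first is the correct one, once stated precisely. By Demailly's regularization theorem there are smooth closed forms $S_k$ cohomologous to $S$ with $S_k\ge-\lambda_k\omega$. Here $\omega$ is a K\"ahler form on $X$ and the $\lambda_k$ are continuous functions decreasing pointwise to $\nu(S,\cdot)$. Hence $\langle\alpha,[C]\rangle=\int_C S_k\ge-\int_C\lambda_k\,\omega$. By monotone convergence the right-hand side tends to $-\int_C\nu(S,\cdot)\,\omega$, and this is $0$. Indeed, by Siu's theorem each set $\{\nu(S,\cdot)\ge c\}$ with $c>0$ is analytic and does not contain $C$, so $\nu(S,\cdot)$ vanishes on $C$ outside a countable set. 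Equivalently, you can cite Demailly's result that a positive closed $(1,1)$ current with no divisorial part in its Siu decomposition has nef class.

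With this replacement the rest of your argument goes through: nef implies $\alpha^2\ge 0$, and you then exhaust $\mathcal{B}(\mathbb{P}^2)$ by finite sets of base points.
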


As a consequence, we obtain that the strict transform of a current on $\mathbb{P}^2$ that does not charge curves is in $l^2(\mathcal{B}(\mathbb{P}^2))$.

\subsection{Hyperbolicity}
The Picard-Manin space $l^2(\mathcal{B}(\mathbb{P}^2))$ is endowed with a intersection form which is non-degenerate and of signature $(1,\infty)$. We thus define  $$\mathbb{H}:=\{c \in l^2(\mathcal{B}(\mathbb{P}^2)) \mid c^2=1 \,  \text{and} \, \langle c,[L]\rangle >0\}. $$
We endow $\mathbb{H}$ with the metric
$$d_\mathbb{H}(c,c'):=\cosh^{-1}(\langle c, c'\rangle).$$
The space $\mathbb{H}$ is an infinite dimensional hyperbolic space. In particular, it is Gromov-hyperbolic. We can identify its Gromov boundary with the set of isotropic lines \cite[Section 3.5.1]{DSU},
$$\partial \mathbb{H}=\{c \in l^2(\mathcal{B}(\mathbb{P}^2)) \mid  c^2=0 \, \text{and} \, \langle c,L\rangle=1\}.$$
We note the following consequence of Cauchy-Schwarz inequality.
\begin{lem}\label{lem2}
Consider two classes $\theta, \theta' \in \partial \mathbb{H}$. We have $\langle \theta, \theta'\rangle \geq0$ with equality if and only if $\theta=\theta'$.
\end{lem}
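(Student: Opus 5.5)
The plan is to apply the reverse Cauchy--Schwarz inequality, which holds for forms of Minkowski type, to isotropic vectors. We decompose each class along the orthogonal splitting $\mathbb{R}[L]\oplus [L]^\perp$. Since $\langle \theta,[L]\rangle=1$ and $[L]^2=1$, we can write
$$\theta=[L]+v,\qquad \theta'=[L]+v',$$
where $v,v'$ lie in the closed subspace $[L]^\perp=\{\sum_p a_p[E_p]\}$. On this subspace the intersection form is negative definite; explicitly, $-\langle\cdot,\cdot\rangle$ is the $l^2$ inner product $\sum_p a_pb_p$. The isotropy condition $\theta^2=0$ reads $1-\Vert v\Vert^2=0$, so $\Vert v\Vert=1$, and likewise $\Vert v'\Vert=1$.

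Next we compute the pairing:
$$\langle\theta,\theta'\rangle=1-(v\cdot v'),$$
where $v\cdot v'$ denotes the $l^2$ inner product. The ordinary Cauchy--Schwarz inequality in the Hilbert space $l^2(\mathcal{B}(\mathbb{P}^2))$ gives $v\cdot v'\le \Vert v\Vert\,\Vert v'\Vert=1$. Hence $\langle\theta,\theta'\rangle\ge 0$.

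For the equality case, suppose $\langle \theta,\theta'\rangle=0$, so $v\cdot v'=1=\Vert v\Vert\,\Vert v'\Vert$. Equality in Cauchy--Schwarz forces $v'=\lambda v$ with $\lambda\ge0$. The normalization $\Vert v\Vert=\Vert v'\Vert=1$ then gives $\lambda=1$, so $v=v'$ and therefore $\theta=\theta'$. Conversely, if $\theta=\theta'$ then $\langle\theta,\theta'\rangle=\theta^2=0$.

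There is no real obstacle here. The only point to check is that the splitting and the inner product make sense for Weil classes in $l^2$, rather than only for Cartier classes. This holds because the intersection form on $l^2(\mathcal{B}(\mathbb{P}^2))$ is by definition the continuous extension of $a_La'_L-\sum_p a_pa'_p$, and this series converges absolutely for $l^2$ sequences.
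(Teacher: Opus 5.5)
Your proof is correct and follows the paper's approach: the paper states this lemma as a direct consequence of the Cauchy--Schwarz inequality, and you carry out exactly that argument. You write $\theta=[L]+v$ with $\Vert v\Vert=1$, obtain $\langle\theta,\theta'\rangle=1-v\cdot v'$, and handle the equality case through the equality case of Cauchy--Schwarz for unit vectors.
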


\subsection{Functorial behaviour}
Let $f: X_1 \dashrightarrow X_2$ be a birational map between two models of $\mathbb{P}^2$. By Zariski's theorem \cite[Theorem 1.59]{La2}, there exist two models $(Y, \pi_1 \circ \alpha)$ and $(Y,\pi_2 \circ \beta)$ such that  $f\circ \alpha =\beta$. We call $\alpha$ a \textbf{resolution} of $f$.
\[
\begin{tikzcd}
    & Y \arrow[ld, "\alpha", swap] \arrow[rd, "\beta"] & \\
    X_1 \arrow[rr, dashed, "f"] & & X_2
\end{tikzcd}
\]
The \textbf{base points} of $f $ are the points blown-up in a minimal resolution of $f$.

Consider a birational map  $f \in \mathrm{Bir}(\mathbb{P}^2)$ and a Cartier current $T\in C(\mathbb{P}^2)$  defined on the model $(X,\pi)$. We denote by $f_{X,\pi}:=\pi^{-1}f\pi$ the induced birational map of $X$.
Consider a resolution $(Y,\pi \circ \alpha)$ of $f_{X,\pi}$,
\[
\begin{tikzcd}
    & Y \arrow[ld, "\alpha", swap] \arrow[rd, "\beta"] & \\
    X \arrow[rr, dashed, "f_{X,\pi}"] & & X.
\end{tikzcd}
\]
We define the pullback of $T$ by $$f^*T:=(\beta^*T_{X,\pi},(Y,\pi \circ \alpha)),$$
that is, $f^*T$ is the Cartier current defined on the model $(Y,\pi \circ \alpha)$ by the current $\beta^*T_{X,\pi}$. Note that the Cartier current $(\beta^*T_{X,\pi},(Y, \pi \circ \beta))$ is equal to $T$ by definition.
Now, if $T$ is a Weil current, we define the realization of $f^*T$ on $(X,\pi)$ by 
$$(f^*T)_{X,\pi}:=\alpha_*T_{Y,\pi \circ \beta}.$$
We can check that $f^*T$ is well-defined, and this definition coincides with the one given above for Cartier currents. Remark that the pullback preserves positivity.

\begin{prop}[Functoriality]
    We have $(f\circ g)^*=g^*f^*$ on $W(\mathbb{P}^2)$ for any $f,g \in \mathrm{Bir}(\mathbb{P}^2)$.
\end{prop}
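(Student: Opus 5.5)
We prove $(f\circ g)^*=g^*f^*$ on Cartier currents first and extend to $W(\mathbb{P}^2)$ by a density and continuity argument. Two facts about birational morphisms of models are used throughout. First, pullbacks compose: $(\psi\phi)^*=\phi^*\psi^*$. This is immediate from writing $T=\eta+dd^cu$, since $(u\circ\psi)\circ\phi=u\circ(\psi\phi)$. Second, $\phi_*\phi^*=\mathrm{id}$ on currents, because $\phi$ is an isomorphism outside a proper analytic subset and currents of bidegree $(1,1)$ on the target are determined there up to Siu decomposition. A preliminary step is to check that the definition of $f^*$ does not depend on the chosen resolution. Any two resolutions are dominated by a third, and on a dominating model $Z\xrightarrow{\gamma}Y$ we have $(\beta\gamma)^*T=\gamma^*\beta^*T$, which is the same Cartier current by the definition of the direct limit.

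\textbf{Cartier case.} Let $T$ be realized on $(X,\pi)$. Choose a resolution $X\xleftarrow{\alpha_f}Y_f\xrightarrow{\beta_f}X$ of $f_{X,\pi}$. Then $f^*T$ is the current $\beta_f^*T$ on the model $(Y_f,\pi\alpha_f)$, and $f_{X,\pi}\alpha_f=\beta_f$. Set $h:=g_{X,\pi}\circ\alpha_f$, viewed as a birational map from a model of $\mathbb{P}^2$ to $(Y_f,\pi\alpha_f)$. Take a model $Z$ with morphisms $a\colon Z\to X$ and $b\colon Z\to Y_f$ such that $\alpha_f\, b=g_{X,\pi}\,a$. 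Such a $Z$ exists by Zariski's theorem applied to $\alpha_f^{-1}g_{X,\pi}$.

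We now compute $g^*(f^*T)$. Since $f^*T$ is realized on $Y_f$, the model $(Y_f,\pi\alpha_f)$ serves as the target. The map $g$ conjugated to $Y_f$ is resolved by some model dominating $Z$ after further blow-ups. Using the independence of the resolution, we may pull back via $Z$ (refining if needed) and obtain the Cartier current $b^*\beta_f^*T=(\beta_f b)^*T$ on $(Z,\pi a)$.

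On the other side, $f_{X,\pi}g_{X,\pi}a=f_{X,\pi}\alpha_f b=\beta_f b$, which is a morphism. Hence $(Z,a,\beta_f b)$ is a resolution of $(f\circ g)_{X,\pi}$, and $(f\circ g)^*T=(\beta_f b)^*T$ on $(Z,\pi a)$. The two sides agree.

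\textbf{Weil case.} We show that $f^*$ is continuous on $W(\mathbb{P}^2)$ for the product topology. By definition, $(f^*T)_{X,\pi}=\alpha_*T_{Y,\pi\beta}$ depends only on one realization of $T$, via a continuous pushforward. Since $C(\mathbb{P}^2)$ is dense in $W(\mathbb{P}^2)$, it remains to approximate an arbitrary Weil current $T$ by the Cartier currents $T^{(X)}$ defined by $T_{X,\pi}$ on finitely many models. Concretely, $T^{(X)}$ has realization $T_{X',\pi'}$ on every model $X'$ dominated by $X$, so $T^{(X)}\to T$ along the directed set of models. Continuity of $f^*$ and $g^*$ then gives $(f\circ g)^*T=\lim (f\circ g)^*T^{(X)}=\lim g^*f^*T^{(X)}=g^*f^*T$.

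One point needs care here. The definition $(f^*T)_{X,\pi}=\alpha_*T_{Y,\pi\beta}$ must agree with the Cartier definition and be compatible with pushforwards; this is the step I expect to be the main obstacle. The paper asserts it, and I would verify it using $\phi_*\phi^*=\mathrm{id}$ together with a common resolution dominating all the resolutions involved, so that everything reduces to composing pushforwards and pullbacks along morphisms. Alternatively, the Weil case can be done directly. Compute the realization on $(X,\pi)$ of $g^*f^*T$ as $a_*\,(\text{realization of }T\text{ on }(Z,\pi\beta_f b))$, where $a\colon Z\to X$ is chosen as above. Checking that this equals the realization of $(f\circ g)^*T$ then reduces to the same resolution diagram.
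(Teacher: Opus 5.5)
Your proposal is correct. The paper gives no argument of its own here, only the reference \cite[Lemma 6.30]{La2}. Your resolution-diagram computation supplies a self-contained argument at the level of currents, and it makes clear that only the formal properties of pullback and pushforward along birational morphisms are involved.

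Within your write-up, the density/continuity detour is unnecessary, and your ``alternative'' is the cleaner proof. Unwinding the definition gives $(f^*T)_{Y,\rho}=T_{Y,f\circ\rho}$ whenever $\rho$ and $f\circ\rho$ are both morphisms $Y\to\mathbb{P}^2$. So take a model $(Z,\pi a)$ dominating $(X,\pi)$ on which $g\pi a$ and $fg\pi a$ are morphisms. Then $(g^*f^*T)_{Z,\pi a}=(f^*T)_{Z,g\pi a}=T_{Z,fg\pi a}=((f\circ g)^*T)_{Z,\pi a}$, and pushing forward by $a$ finishes.

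The point you flag as the main obstacle is routine. First, two resolutions of $f_{X,\pi}$ are dominated by a third one, which makes $\alpha_*T_{Y,\pi\beta}$ independent of the resolution. Second, if $\phi$ exhibits $(X_1,\pi_1)$ as dominating $(X_2,\pi_2)$, a resolution $(Y,\alpha,\beta)$ of $f_{X_1,\pi_1}$ yields the resolution $(Y,\phi\alpha,\phi\beta)$ of $f_{X_2,\pi_2}$; this gives compatibility with pushforward. Third, for a Cartier current realized on $(X,\pi)$ one has $T_{Y,\pi\beta}=\beta^*T_{X,\pi}$, which gives agreement with the Cartier definition.

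Two minor points. The identity $\phi_*\phi^*=\mathrm{id}$ is better justified by noting that the difference is a closed order-zero $(1,1)$-current supported on finitely many points, hence zero; Siu's decomposition is not the relevant fact. Also, the map $h$ you introduce is never used.
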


\begin{proof}
See \cite[Lemma 6.30]{La2}.
\end{proof}

We can define similarly the pushforward by $f$, and we remark that we have $$f_*=(f^{-1})^*.$$
We denote by $f_{\mathbb{P}^2}:\mathcal{D}(\mathbb{P}^2) \rightarrow \mathcal{D}(\mathbb{P}^2)$ the usual pullback of currents on $\mathbb{P}^2$, which is defined by $$f_{\mathbb{P}^2}^*T_{\mathbb{P}^2}:=\alpha_*\beta^*T_{\mathbb{P}^2}$$
when $\alpha$ is a resolution of $f$ and $\beta=f \circ \alpha$.
We observe that the usual pullback $f_{\mathbb{P}^2}^*T_{\mathbb{P}^2}$ is equal to the realization $(f^*T_{\mathbb{P}^2})_{\mathbb{P}^2}$ on $\mathbb{P}^2$ of the pullback of the Cartier current $T_{\mathbb{P}^2}$.

We now consider the pullback at the level of classes $f^*:\overline{W}(\mathbb{P}^2) \rightarrow \overline{W}(\mathbb{P}^2)$. We gather a few properties of $f^*$ in the following proposition.

\begin{prop}\label{prop27}
    For any birational map $f$, the pullback $f^*$ defines an isometry of the Picard-Manin space $(l^2(\mathcal{B}(\mathbb{P}^2)), \langle , \rangle )$, which preserves the nef cone. Moreover, its adjoint is $f_*$, that is, we have $$\langle f^*c,d\rangle=\langle c, f_* d\rangle$$
    for any $c,d \in l^2(\mathcal{B}(\mathbb{P}^2)).$ We have also 
    $$\langle f^*[L],[L]\rangle= \deg(f).$$
\end{prop}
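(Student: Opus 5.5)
The plan is to work model by model, using a resolution of $f$, and then pass to the limit over models. Fix $f\in\mathrm{Bir}(\mathbb{P}^2)$. For a Cartier class $c$ realized on a model $(X,\pi)$, take a resolution $(Y,\pi\circ\alpha)$ of $f_{X,\pi}$ with $f_{X,\pi}\circ\alpha=\beta$. By definition $f^*c$ is the Cartier class of $\beta^*c_{X,\pi}$ on $(Y,\pi\circ\alpha)$. Both $\alpha$ and $\beta$ are birational morphisms, so $(Y,\pi\circ\alpha)$ and $(Y,\pi\circ\beta)$ are models. The key identification is that, through the identification of Cartier classes, $f^*$ maps the basis of $\overline{C}(\mathbb{P}^2)$ attached to $(Y,\pi\circ\beta)$ onto the basis attached to $(Y,\pi\circ\alpha)$. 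Concretely, the class $[L]$ pulled back via $\pi\circ\beta$ becomes $f^*[L]$, and $[E_p]$ for $p$ a base point of $\pi\circ\beta$ becomes the class of $E_{q}$, where $q$ is the corresponding point for $\pi\circ\alpha$.

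\textbf{Isometry on Cartier classes.} The identity map of $H^{1,1}(Y)$ preserves the intersection form on $Y$. By the compatibility $\langle\phi^*c,d\rangle=\langle c,\phi_*d\rangle$ (so $\langle \phi^*c,\phi^*d\rangle=\langle c,d\rangle$), the intersection form on $\overline{C}(\mathbb{P}^2)$ computed on $(Y,\pi\circ\beta)$ agrees with the one computed on $(Y,\pi\circ\alpha)$. Hence $\langle f^*c,f^*d\rangle=\langle \beta^*c,\beta^*d\rangle_Y=\langle c,d\rangle$ for all Cartier classes $c,d$. Since $f^{-1}$ is resolved by swapping $\alpha$ and $\beta$, we get $f_*=(f^{-1})^*=(f^*)^{-1}$ on $\overline{C}$. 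Therefore $f^*$ is a bijective isometry of $(\overline{C},\langle,\rangle)$. The adjoint formula follows: $\langle f^*c,d\rangle=\langle f^*c,f^*f_*d\rangle=\langle c,f_*d\rangle$.

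\textbf{Extension to $l^2$; this is the main obstacle.} The form is indefinite, so an isometry for $\langle,\rangle$ need not be bounded for $\Vert\cdot\Vert$. I would use the decomposition $\overline{C}=\mathbb{R}[L]\oplus[L]^\perp$, on which $\Vert c\Vert^2=2\langle c,[L]\rangle^2-c^2$, and then show that $f^*$ moves only finitely many basis vectors. Namely, $f^*[E_p]=[E_{q(p)}]$ for all $p$ outside the finite set of base points of $f^{-1}$ and their images, where $q$ is the bijection of $\mathcal{B}(\mathbb{P}^2)$ induced by the local isomorphism $f$ away from the indeterminacy and exceptional curves. With this, $f^*$ is a finite-rank perturbation of a permutation of an orthonormal basis, and so it is bounded for $\Vert\cdot\Vert$. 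It therefore extends continuously to $l^2(\mathcal{B}(\mathbb{P}^2))$, where it is still an isometry with adjoint $f_*$. One also checks that this extension agrees with the restriction of the pullback on $\overline{W}$, since both are compatible with the realizations on each model.

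\textbf{Nef cone and degree.} For nef preservation, take $c$ nef and an effective divisor $D$ on a model $X$. Then $\langle f^*c,[D]\rangle=\langle c,f_*[D]\rangle$, and $f_*[D]$ is realized on each model as the pushforward of an effective divisor, hence is effective there. So the pairing is $\ge0$. Finally, $f^*[L]$ is represented by the pullback of a general line, i.e.\ a curve of degree $\deg f$ on $\mathbb{P}^2$, so $\langle f^*[L],[L]\rangle=\deg(f)$ because the coefficient on $[L]$ of its realization on $\mathbb{P}^2$ is the degree.
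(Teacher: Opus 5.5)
Your proof is correct and is the standard argument. You resolve $f$ by a surface $Y$ carrying the two model structures $\pi\circ\alpha$ and $\pi\circ\beta=f\circ\pi\circ\alpha$, compare the intersection forms on $H^{1,1}(Y)$, extend by continuity, and finish with adjunction for the nef cone. The paper gives no argument of its own, only a reference to Lamy's book, so there is no different route to compare against.

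Two points should be fixed.

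First, your opening ``key identification'' is false as stated. The map $f^*$ sends $[E_p]$ to some $[E_q]$ only when $p\notin\mathrm{base}(f^{-1})$, not for every base point of $\pi\circ\beta$. For the standard quadratic involution $\sigma$ with base points $p_1,p_2,p_3$, one has $\sigma^*[E_{p_1}]=[L]-[E_{p_2}]-[E_{p_3}]$. You never use that sentence, so simply drop it.

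Second, in the $l^2$ step you must define the bijection $q$ precisely. Take $Y$ to be the minimal resolution of $f$; then $\beta$ and $\alpha$ give identifications
$\mathcal{B}(\mathbb{P}^2)\setminus\mathrm{base}(f^{-1})\simeq\mathcal{B}(Y)\simeq\mathcal{B}(\mathbb{P}^2)\setminus\mathrm{base}(f)$.
Your description (``induced by the local isomorphism $f$ away from the indeterminacy and exceptional curves'') does not cover the infinitely many proper points on curves contracted by $f^{-1}$. Such points are not base points, and $f^{-1}$ is not a local isomorphism there. Their partners are infinitely near points over $\mathrm{base}(f)$. As written, the finiteness claim is therefore not justified by your description.

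You can bypass this bookkeeping entirely with the identity you already wrote down. By the isometry and adjunction on $\overline{C}(\mathbb{P}^2)$ and Cauchy--Schwarz,
$\Vert f^*c\Vert^2=2\langle c,f_*[L]\rangle^2-c^2\le\bigl(1+2\Vert f_*[L]\Vert^2\bigr)\Vert c\Vert^2$.
Since $f_*[L]$ is a Cartier class, this shows directly that $f^*$ is bounded and extends to $l^2(\mathcal{B}(\mathbb{P}^2))$.
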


\begin{proof}
    See \cite[Corollary 1.113]{La2} and \cite[Lemma 6.32]{La2}.
\end{proof}

We obtain that $f^* $ defines an isometry of the infinite dimensional hyperbolic space $\mathbb{H}$. We denote by $\underline{f}: \partial \mathbb{H} \rightarrow \partial \mathbb{H}$ the induced action on the boundary, which is defined by
$$\underline{f}^*\theta:=\frac{1}{\langle f^*\theta, L \rangle} f^*\theta.$$
We can classify elements of $\mathrm{Bir}(\mathbb{P}^2)$ as elliptic, parabolic or loxodromic by their action on $\partial\mathbb{H}$. See \cite[Section 6.36]{La2}. An isometry is loxodromic if it has two fixed points on the boundary, one repelling and one attracting. We say that a subgroup (or a subsemigroup) of $\mathrm{Bir}(\mathbb{P}^2)$ is \textbf{non-elementary} if it contains two loxodromic elements with disjoint fixed sets.

\subsection{Sequential compactness}
Consider a countable subgroup $\Gamma$ of $\mathrm{Bir}(\mathbb{P}^2)$. We restrict the definition of $W(\mathbb{P}^2)$ to a countable set of models.
\begin{prop}\label{prop28}
    If $\Gamma$ is a countable subgroup of $\mathrm{Bir}(\mathbb{P}^2)$, then there exists a countable set of models $\mathcal{M}_\Gamma $ which is a projective system, and on which the pullback by elements of $\Gamma$ is well-defined. We define $$W_\Gamma(\mathbb{P}^2):=\lim_{\longleftarrow } \mathcal{D}(X,\pi) $$
    where the limit is taken over $(X,\pi) \in \mathcal{M}_\Gamma.$  We define similarly $C_{\Gamma}(\mathbb{P}^2)$ and the associated spaces of classes $\overline{W}_{\Gamma}(\mathbb{P}^2)$, $\overline{C}_\Gamma(\mathbb{P}^2)$.
\end{prop}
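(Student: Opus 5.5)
The idea is to build $\mathcal{M}_\Gamma$ as the smallest family of models that is closed under the operations needed to pull back by elements of $\Gamma$, and to check that this family is countable. For each $g \in \Gamma$, consider the finite set $\mathrm{base}(g)\subset \mathcal{B}(\mathbb{P}^2)$ of base points of a minimal resolution of $g$. For any $h\in\Gamma$, the map $h$ sends base points to base points in the sense of $\mathcal{B}(\mathbb{P}^2)$: outside its own base points, $h$ induces a local isomorphism between infinitely near points. Let $\mathcal{P}_\Gamma\subset\mathcal{B}(\mathbb{P}^2)$ be the smallest subset containing $\bigcup_{g\in\Gamma}\mathrm{base}(g)$ that is stable under the induced action of $\Gamma$ and under passing to the points of lower order they lie over. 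Since $\Gamma$ is countable and each $\mathrm{base}(g)$ is finite, $\mathcal{P}_\Gamma$ is a countable union of countable sets, hence countable.

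Define $\mathcal{M}_\Gamma$ as the set of models $(X,\pi)$ obtained by blowing up a finite subset $S\subset\mathcal{P}_\Gamma$ that is closed under "lying over", meaning that if $p\in S$ is infinitely near to $q$, then $q\in S$. Such a finite closed subset determines the model uniquely up to equivalence. There are countably many finite subsets of a countable set, so $\mathcal{M}_\Gamma$ is countable. Domination corresponds to inclusion of base sets, and the union of two admissible sets is admissible, so any two models in $\mathcal{M}_\Gamma$ are dominated by a third one in $\mathcal{M}_\Gamma$. This makes $\mathcal{M}_\Gamma$ a directed set, and hence a projective and a direct system for pushforwards and pullbacks.

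The main point is stability under pullback. Given $g\in\Gamma$ and $(X,\pi)\in\mathcal{M}_\Gamma$, we need a resolution $(Y,\pi\circ\alpha)$ of $g_{X,\pi}$ with both $(Y,\pi\circ\alpha)$ and $(Y,\pi\circ\beta)$ in $\mathcal{M}_\Gamma$. The base points of $\pi\circ\alpha$ are the base points of $\pi$ together with the base points of $g_{X,\pi}$. I would argue that the latter lie in $\mathrm{base}(g)\cup g^{-1}(\mathrm{base}(\pi))$, using the composition $\pi^{-1} g\pi$ and the description of base points of a composition (Zariski factorization). Both sets are contained in $\mathcal{P}_\Gamma$. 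Similarly, the base points of $\pi\circ\beta$ are the images under $g$ of those of $\pi\circ\alpha$, together with the base points of $g^{-1}$, which again lie in $\mathcal{P}_\Gamma$.

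Once this holds, the definitions of $f^*$ on Weil and Cartier currents from the previous subsection apply verbatim with models restricted to $\mathcal{M}_\Gamma$. The spaces $W_\Gamma$, $C_\Gamma$, $\overline{W}_\Gamma$ and $\overline{C}_\Gamma$ are then defined as inverse and direct limits over the countable directed set $\mathcal{M}_\Gamma$.

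The expected obstacle is the bookkeeping in the last step, namely controlling the base points of $\pi^{-1}g\pi$ in terms of $\mathrm{base}(g)$ and $g$-translates of $\mathrm{base}(\pi)$. I would handle it by building $\mathcal{P}_\Gamma$ generously, closing under all of $\Gamma$ (a group, so inverses are included) and under lying-over, so that every point produced lands inside it automatically.
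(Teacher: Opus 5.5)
Your proposal is correct, but it takes a different route from the paper: you close up at the level of infinitely near points, whereas the paper closes up at the level of models.

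The paper starts from the minimal resolutions of elements of $\Gamma$. It then iteratively adjoins minimal upper bounds of pairs of models and the transported models $(Z,\beta\circ\psi)$. A diagram chase then shows that some resolution of $\gamma_{X,\pi}$, together with its image model, lies in the system. This uses only the existence of minimal resolutions and of minimal upper bounds.

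You instead fix a countable, $\Gamma$-stable, predecessor-closed set $\mathcal{P}_\Gamma\subset\mathcal{B}(\mathbb{P}^2)$ and take all models supported on it. Countability and directedness are then immediate. Stability under pullback reduces to two facts: $\mathrm{base}(g_{X,\pi})\subset\mathrm{base}(g)\cup g^{-1}(\mathrm{base}(\pi)\setminus\mathrm{base}(g^{-1}))$, and $\mathrm{base}(g\circ\pi\circ\alpha)=\mathrm{base}(g^{-1})\sqcup g(\mathrm{base}(\pi\circ\alpha)\setminus\mathrm{base}(g))$. You should state them in this form, with images taken only outside the relevant base sets. Both follow from the standard fact that $g$ induces a bijection $\mathcal{B}(\mathbb{P}^2)\setminus\mathrm{base}(g)\to\mathcal{B}(\mathbb{P}^2)\setminus\mathrm{base}(g^{-1})$ compatible with composition. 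You should quote this fact explicitly rather than leave it as "bookkeeping".

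\textbf{What each approach buys.} The paper's route avoids this bubble-space machinery. Yours yields a saturated system: every model whose base points lie in $\mathcal{P}_\Gamma$ belongs to $\mathcal{M}_\Gamma$. For instance, the one-point blow-ups used in Proposition \ref{prop2} are literally in your system. The paper's minimal closure need not contain them: for the group generated by the standard quadratic involution it consists only of $\mathbb{P}^2$ and the blow-up of the three base points. Realizations on such models then have to be defined by pushing forward from a dominating model.

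Finally, your closure operations are redundant. Each $\mathrm{base}(g)$ is already predecessor-closed. Moreover, $h(\mathrm{base}(g)\setminus\mathrm{base}(h))\subset\mathrm{base}(g\circ h^{-1})$ by the composition rule. Hence $\mathcal{P}_\Gamma=\bigcup_{g\in\Gamma}\mathrm{base}(g)$.
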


\begin{proof}
    We define $\mathcal{M}_0$ to be the set of all minimal resolutions of elements of $\Gamma$. It is a countable set of models, but it is not necessarily a projective system. We now define inductively an increasing sequence of countable sets of models. We first add to $\mathcal{M}_n$ all the minimal upper bounds of any two models in $\mathcal{M}_n$. Moreover, if $\gamma \in \Gamma$, we let $(Y, \alpha)$ be the minimal resolution of $\gamma$. For any $(X_1,\pi_1) \in \mathcal{M}_n$, we consider $(Z,\pi_1 \circ \phi)$ to be the minimal upper bound of $(X_1, \pi_1)$ and $(Y,\alpha)$. Then, we add the model $(Z,\beta \circ \psi)$ obtained in the following diagram

\[ \begin{tikzcd}
    & Z \arrow[ld, "\phi", swap] \arrow[rd, "\psi"] & &\\
    X_1 \arrow[rd, "\pi_1", swap]& & Y \arrow[ld, "\alpha" ] \arrow[rd, "\beta"] &\\
    &\mathbb{P}^2 \arrow[rr, "\gamma", dashed] & & \mathbb{P}^2.   
\end{tikzcd}
\]
This defines $\mathcal{M}_{n+1}$ from $\mathcal{M}_n$ and we define  
$$\mathcal{M}_\Gamma:=\bigcup_{n \in \mathbb{N}} \mathcal{M}_n$$
which is a countable set of models and a projective system.
We define $$W_\Gamma(\mathbb{P}^2)=\lim_{\longleftarrow} \mathcal{D}(X,\pi) $$
where $(X,\pi)$ is a model in $\mathcal{M}_\Gamma$.

We now check that the pullback by elements of $\Gamma$ is well-defined on $W_\Gamma(\mathbb{P}^2)$. Let $T \in W_\Gamma(\mathbb{P}^2) $ and  $(X,\pi)$ be a model in $\mathcal{M}_\Gamma$. We define $(\gamma^*T)_{X,\pi}$ using the following diagram:
\[
\begin{tikzcd}
    & &Z'' \arrow[ld, "u", swap] \arrow[rd, "v"] & & \\
    & Z \arrow[ld, "\phi", swap ] \arrow[rd, "\psi"]&  & Z' \arrow[ld, "\delta", swap] \arrow[rd, "\lambda"] & \\
    X \arrow[rd, "\pi", swap] & &  Y \arrow[rd, "\beta", swap] \arrow[ld,"\alpha"]& & X \arrow[ld, "\pi"] \\
    &\mathbb{P}^2 \arrow[rr, dashed, "\gamma"] &   & \mathbb{P}^2& \\
\end{tikzcd}
\]
The model $(Y,\alpha)$ is in $\mathcal{M}_\Gamma$ as a minimal resolution of $\gamma$. Remark that $(Y,\beta)$ is also in $\mathcal{M}_\Gamma$ as a minimal resolution of $\gamma^{-1}\in \Gamma$. Then, $(Z,\pi \circ \phi)\in \mathcal{M}_\Gamma$ is an upper bound of $(X,\pi)$ and $(Y,\alpha)$. Similarly, $(Z', \pi \circ \lambda)\in \mathcal{M}_\Gamma$ is an upper bound of $(Y,\beta) $ and $(X,\pi)$. Thus, $(Z, \beta \circ \psi)$ belongs to $\mathcal{M}_\Gamma$. Finally, $(Z'', \pi \circ \lambda \circ v)\in \mathcal{M}_\Gamma$ is an upper bound of $(Z,\beta \circ \psi)$ and $(Z', \pi \circ \lambda)$. Thus, $ \phi \circ u$ is a resolution of the induced map $\gamma_{X,\pi}=\pi^{-1}\gamma \pi$ and we can define $$(\gamma^*T)_{X,\pi}:=(\phi \circ u)_* T_{Z'', \pi \circ \lambda \circ v}.$$
We conclude that the pullback of any element of $W_\Gamma(\mathbb{P}^2)$ by $\gamma$ is well-defined.
\end{proof}

Consider a model $(X,\pi)$ in $\mathcal{M}_\Gamma$. We fix a Kähler form $\kappa_{X,\pi}$ on $X$. The mass of a current $T_{X,\pi} \in \mathcal{D}(X,\pi)$, is defined by $$\Vert T_{X,\pi} \Vert_{X,\pi} := \int_X T_{X,\pi} \wedge \kappa_{X,\pi}.$$
Note that we have $$\Vert T_{X,\pi} \Vert_{X,\pi}=\langle [T], [\kappa_{X,\pi}]\rangle$$
if $T_{X,\pi}$ is the realization on $(X,\pi)$ of a Weil current $T \in W_\Gamma(\mathbb{P}^2)$. Recall that a sequence of a positive closed $(1,1)$ currents on $(X,\pi)$ with bounded mass is relatively compact. We obtain the following result.  

\begin{lem}\label{lem3}
    If $(T_n)_{n \geq 1}\in W_{\Gamma}^+(\mathbb{P}^2)$ is a sequence of positive closed Weil currents such that $(\Vert T_{n 
    }\Vert_{X,\pi} )  $ 
    is bounded on each model $(X,\pi) \in \mathcal{M}_\Gamma$, then $(T_n)$ admits a convergent subsequence for the product topology.
\end{lem}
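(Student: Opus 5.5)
The plan is a diagonal extraction. It uses the fact that $\mathcal{M}_\Gamma$ is countable (Proposition \ref{prop28}), together with weak compactness of positive closed currents of bounded mass on each fixed model.

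First, enumerate the models of $\mathcal{M}_\Gamma$ as $(X_1,\pi_1),(X_2,\pi_2),\dots$. On $(X_1,\pi_1)$ the realizations $(T_n)_{X_1,\pi_1}$ are positive closed $(1,1)$ currents with bounded mass $\Vert (T_n)_{X_1,\pi_1}\Vert_{X_1,\pi_1}$. By the classical compactness theorem for positive currents of bounded mass on a compact Kähler manifold (Banach--Alaoglu), we extract a subsequence along which they converge weakly to a positive closed current $S_1$ on $X_1$. Inductively, from the subsequence chosen at step $k-1$ we extract a further subsequence along which the realizations on $X_k$ converge to some positive closed $S_k\in\mathcal{D}^+(X_k,\pi_k)$. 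The diagonal subsequence $(T_{n_j})$ then has convergent realizations on every model of $\mathcal{M}_\Gamma$.

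Next, we check that the limits $(S_k)$ form a Weil current, i.e.\ that they are compatible with pushforward. If $(X_k,\pi_k)$ dominates $(X_l,\pi_l)$, then $(\phi_{X_k,X_l})_*(T_{n_j})_{X_k}=(T_{n_j})_{X_l}$ for every $j$. Pushforward by a holomorphic map is weakly continuous on currents: indeed $\langle \phi_*T,\alpha\rangle=\langle T,\phi^*\alpha\rangle$, and $\phi^*\alpha$ is a smooth test form. Passing to the limit gives $(\phi_{X_k,X_l})_*S_k=S_l$. Thus $S=(S_k)$ lies in $W^+_\Gamma(\mathbb{P}^2)$, and by definition of the product topology $T_{n_j}\to S$.

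The only point needing care is the compactness step on a single model. Since $\kappa_{X,\pi}$ is Kähler, the mass controls the total variation of a positive $(1,1)$ current, so bounded mass gives relative compactness in the weak topology. Weak limits of positive closed currents remain positive and closed. Separability of the space of test forms lets us extract genuine subsequences rather than subnets, and this is what makes the diagonal argument legitimate.
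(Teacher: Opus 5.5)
Your proposal is correct and follows the paper's approach: compactness of positive closed currents of bounded mass on each model, combined with the countability of $\mathcal{M}_\Gamma$ from Proposition \ref{prop28} and a diagonal extraction. The paper leaves the argument implicit (it only recalls the single-model compactness). Your check that the limits are compatible with pushforward, using weak continuity of $\phi_*$, is the step it leaves unstated, and it is correct.
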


In the following, we will always denote by $W(\mathbb{P}^2)$ the space $W_{\Gamma}(\mathbb{P}^2)$ when we consider the dynamics of a finitely generated subgroup.

\section{Random walk on the Picard-Manin space}\label{section3}
In this section, we state results of Maher--Tiozzo \cite{MT2} and Gouëzel--Karlsson \cite{GK} concerning the asymptotic behaviour of the random walk on the Picard-Manin space. Consider a finitely supported probability measure $\mu$ on $\mathrm{Bir}(\mathbb{P}^2)$. We denote by $\Sigma_\mu$ its support and by $\Gamma_\mu$ the subsemigroup of $\mathrm{Bir}(\mathbb{P}^2)$ generated by $\Sigma_\mu$. We assume that $\Gamma_\mu$ is non-elementary. We set $\Omega:=\Sigma_\mu^\mathbb{N}$ and for $\omega=(f_n)_{n \geq 1} \in \Omega$, we consider the left product  $$f^n_\omega:=f_n \cdots f_1$$
for any $n \geq 1$.
We define also $f^0_\omega:=\mathrm{id}$.
The action by pullback induces a random walk on the Picard-Manin space. We choose $[L]$ as a base point for the random walk, that is, we consider $(f^n_\omega)^*[L]$. By functoriality, it defines a right walk on the infinite dimensional hyperbolic space $\mathbb{H}$:$$(f^n_\omega)^*[L]=(f_1)^* \cdots (f_n)^*[L].$$

\subsection{Random walks on Gromov hyperbolic spaces}
Random walks on proper Gromov hyperbolic spaces are well understood, and under reasonable assumptions on the law, we obtain classical properties such as transience, convergence to the boundary at linear speed and identification of the Poisson boundary with the Gromow boundary. Here, we cannot use these results as we are working with the infinite dimensional hyperbolic space $\mathbb{H}$ which is not proper. We state a theorem of Maher and Tiozzo which combines results from \cites{MT1, MT2} and that concerns random walks on non-proper Gromov hyperbolic spaces. 

Let $(X,d)$ be a Gromov hyperbolic space, and $\mu$ be a finitely supported probability measure on a subgroup $\Gamma $ of the isometry group of $X$. We denote by $\Sigma_\mu$ the support of $\mu$ and we set $\Omega:=\Sigma_\mu^\mathbb{N}$. For any $\omega=(f_n)_{n\geq 1} \in \Omega$, we define  $r^0_\omega :=id$ and $$ r^n_\omega:=f_1 \cdots f_{n}$$
when $n\geq 1$. Let $x_0 \in X$ be any base point. We consider the right random walk based on $x_0$, defined by $r^n_\omega(x_0)$.
Isometries of $X$ are classified as elliptic, parabolic or loxodromic \cite{DSU}. Loxodromic elements act with two fixed points on $\partial X$, one attracting, one repelling. We say that $\mu$ is non-elementary if the semigroup generated by $\Sigma_\mu$ contains two loxodromic elements with disjoint fixed point sets. We say that a loxodromic element $f \in \Gamma$ is weakly properly discontinuous (WPD) if for any $x \in X$ and $K\geq 0$, there exists $N\geq 1$, such that the set
$$\{ \gamma \in \Gamma \mid d(x, \gamma (x)) \leq K \, , \, d(f^N(x), \gamma f^N(x)) \leq K \}  $$
is finite. Intuitively, $f$ is WPD if its action is proper along its axis. We say that $\mu$ is WPD if the semigroup $\Gamma_\mu$ generated by $\Sigma_\mu$ contains a WPD element.

\begin{thm}[Maher--Tiozzo]\label{thm4}
    Let $\mu$ be a finitely supported, non-elementary probability measure on a subgroup $\Gamma$ of isometries of a Gromov hyperbolic space $(X,d)$. Fix a base point $x_0 \in X$.
    \begin{enumerate}
        \item For $\mu^\mathbb{N}$-almost every $\omega\in \Omega$, there exists a point $\xi (\omega) \in \partial X$ such that $$\lim_{n \rightarrow +\infty} r^n_\omega x_0 = \xi(\omega).$$
        \item There exists $l>0$ such that for $\mu^\mathbb{N}$-almost every $\omega\in \Omega$, $$\lim_{n \rightarrow +\infty} \frac{1}{n}d(r^n_\omega x_0 ,x_0) =l.$$
        \item We endow the Gromov boundary $\partial X$ with the hitting measure $\nu $ defined by 
        $$\nu(B):=\mu^\mathbb{N}(\omega  \mid \xi(\omega) \in B)$$
        for any Borel subset $B \subset \partial X$. The hitting measure is non-atomic, and if we assume moreover that $\mu$ is WPD, then $(\partial X, \nu)$ is a model for the Poisson boundary of the right random walk $(\Gamma,\mu)$, that is, the map $$\phi \mapsto \left( \gamma  \mapsto\int_{\partial X } \phi(\gamma (b)) \, d\nu(b)   \right) $$ 
        defines an isomorphism between $L^\infty(\partial X, \nu)$ and the space $H^\infty(\Gamma,\mu)$ of bounded $\mu$-harmonic functions.
    \end{enumerate}
\end{thm}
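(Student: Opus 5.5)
The statement is a compilation of results from Maher--Tiozzo \cite{MT1, MT2}, and I would prove it by following their strategy. Properness of $X$ is never used; only Gromov hyperbolicity, the non-elementary hypothesis, and (for the third item) the WPD hypothesis enter. Throughout, I write $\mu$ both for the measure and, via $\omega \mapsto r^n_\omega$, for the associated right random walk. I would treat the three items in order, since each uses the previous one.

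\textbf{Step 1: convergence to the boundary.} I would replace the (non-compact) Gromov boundary by the horofunction compactification of $X$. It is compact for the topology of pointwise convergence of Busemann-type functions, even when $X$ is not proper. By compactness, I would take a $\mu$-stationary probability measure $\nu_h$ on the horofunction boundary.

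\begin{itemize}
\item The non-elementary hypothesis gives two independent loxodromics $g,h$ in the semigroup. Ping-pong with them should show that $\nu_h$ gives zero mass to the horofunctions with bounded (finite) local minimum. It should also show that $\nu_h$ does not charge any single point of $\partial X$ under the natural map from non-trivial horofunctions to $\partial X$.
\item I would then run the Furstenberg martingale argument. For $\nu_h$-generic behaviour, the measures $(r^n_\omega)_*\nu_h$ converge weakly almost surely to a limit $\lambda_\omega$.
\item Non-atomicity of $\nu_h$ forces $\lambda_\omega$ to be a Dirac mass at some $\xi(\omega)\in\partial X$.
\item A shadow argument then gives $r^n_\omega x_0\to\xi(\omega)$: the random walk eventually enters shadows of small $\nu_h$-measure around $\xi(\omega)$, with shadows taken along geodesics from $x_0$.
\end{itemize}

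Non-atomicity of the hitting measure $\nu$ is obtained along the way. It is the unique stationary measure on $\partial X$ and is the pushforward of $\omega\mapsto\xi(\omega)$.

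\textbf{Step 2: positive drift.} The sequence $d(r^n_\omega x_0,x_0)$ is subadditive along the shift, and $\mu$ has finite support, hence a finite first moment. Kingman's subadditive ergodic theorem therefore gives an almost sure limit $l\ge 0$. To show $l>0$, I would prove a linear progress estimate: there are $c>0$ and $K<1$ with
\[
\mu^{\mathbb N}\bigl(d(r^n_\omega x_0,x_0)\le cn\bigr)\le K^n.
\]
This follows from the shadow estimates of Step 1. Because $\nu$ is non-atomic, the probability that the walk stays in a fixed ball, or backtracks through a shadow, decays exponentially. Hence Gromov products $\langle r^n x_0, r^{n+m}x_0\rangle_{x_0}$ grow while distances grow linearly.

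\textbf{Step 3: Poisson boundary under WPD.} I would apply Kaimanovich's strip criterion, using a reflected walk driven by $\check\mu$. This gives a pair of boundary points $(\xi_-,\xi_+)$, distinct almost surely by non-atomicity. To each such pair I would assign a strip $S(\xi_-,\xi_+)\subset\Gamma$, equivariant in $\gamma$: the set of $\gamma$ such that $\gamma x_0$ lies within a uniform distance of a quasi-geodesic joining $\xi_-$ to $\xi_+$.

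The criterion requires that $\#\bigl(S\cap B_\Gamma(n)\bigr)$ grow subexponentially (here polynomially) in $n$, with $B_\Gamma(n)$ the word-metric ball. In a proper space this is automatic; here it is the main obstacle, because point stabilizers may be infinite. This is exactly where the WPD element $f$ enters.

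\begin{itemize}
\item Using positive drift and Step 1, the random walk almost surely fellow-travels translates of the axis of $f$ for long stretches, with positive frequency.
\item The WPD condition then bounds the number of group elements moving both $x$ and $f^N x$ by at most $K$.
\item Consequently the strip has only linearly many elements of $X$-displacement at most $n$.
\item Combining this with the linear comparison between word length and displacement (finite support plus Step 2) yields subexponential growth.
\end{itemize}

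Kaimanovich's criterion then identifies $(\partial X,\nu)$ with the Poisson boundary. The map $\phi\mapsto\bigl(\gamma\mapsto\int\phi(\gamma b)\,d\nu(b)\bigr)$ is then the standard isomorphism $L^\infty(\partial X,\nu)\cong H^\infty(\Gamma,\mu)$.
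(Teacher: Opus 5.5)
The paper gives no argument of its own here: it quotes Theorems 1.1 and 1.2 of \cite{MT1} for (1) and (2), and Theorem 1.8 of \cite{MT2} for (3). Your outline reconstructs the Maher--Tiozzo arguments, and Steps 1 and 2 are acceptable as sketches. One caveat: Maher--Tiozzo assume $X$ separable, since that is what makes the horofunction compactification metrizable in your martingale step. This is harmless in the application, because one can replace $\mathbb{H}$ by its intersection with the closed linear span of the countable orbit of $[L]$.

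\textbf{The gap is in Step 3.} Your strip is the set of $\gamma$ such that $\gamma x_0$ lies within bounded distance of a quasi-geodesic from $\xi_-$ to $\xi_+$. This set is invariant under right multiplication by $\mathrm{Stab}_\Gamma(x_0)$. WPD says nothing about point stabilizers; it only controls elements that coarsely fix two far-apart points $x$ and $f^N x$. So your third bullet (linearly many strip elements of displacement at most $n$) does not follow, and it is false in general. For example, take $\Gamma=A*B$ with $A\simeq B\simeq F_2$ acting on its Bass--Serre tree. Edge stabilizers are trivial, so the action is acylindrical and every loxodromic element is WPD. But if $x_0$ is the vertex fixed by $A$, your strip contains $\gamma A$ for each of its elements $\gamma$. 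It therefore meets the word ball of radius $n$ in exponentially many elements, and Kaimanovich's criterion fails for this strip.

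\textbf{The fix} is to build the strip from the WPD segment itself.
\begin{enumerate}
\item Let $\ell(\xi_-,\xi_+)$ be the union of all $(1,C\delta)$-quasi-geodesics from $\xi_-$ to $\xi_+$, for a suitable universal $C$; this choice is automatically equivariant.
\item Fix a tracking constant $R$ depending on $\delta$, $f$ and $x_0$ but not on $N$. This determines a constant $K$ depending only on $R$ and $\delta$. Only then choose $N$ from the WPD property for $x_0$ and $K$.
\item Let $S(\xi_-,\xi_+)$ be the set of $\gamma$ such that $\gamma x_0$ and $\gamma f^N x_0$ both lie within $R$ of $\ell(\xi_-,\xi_+)$, with $\gamma f^N x_0$ on the $\xi_+$ side.
\end{enumerate}
Now suppose $\gamma_1,\gamma_2\in S$ and $\gamma_1 x_0,\gamma_2 x_0$ lie within $R$ of the same point of $\ell$. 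Then $\gamma_2^{-1}\gamma_1$ moves both $x_0$ and $f^N x_0$ by at most $K$, so it lies in a fixed finite set. Covering the part of $\ell$ at distance at most $n$ from $x_0$ by $O(n)$ balls gives $O(n)$ strip elements of displacement at most $n$; the same holds for right translates $Sg$. The word ball of radius $n$ lies in the displacement ball of radius $C'n$, with $C'=\max_{g\in\Sigma_\mu}d(x_0,gx_0)$, so you get the required subexponential growth. Your first bullet (the walk fellow-travels translates of the axis of $f$ with positive frequency) is exactly what makes $S(\xi_-,\xi_+)$ non-empty for almost every pair. With this strip, the rest of your Step 3 goes through.
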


\begin{proof}
    The first part comes from \cite[Theorem 1.1]{MT1}, and the second from \cite[Theorem 1.2]{MT1}. 
    See \cite[Theorem 1.8]{MT2} for the third part.  
\end{proof}

\subsection{Degree growth}
We apply Theorem \ref{thm4} to the random walk on the Picard-Manin space.

\begin{thm}[Maher--Tiozzo]\label{thm2}
Let $\mu$ be a finitely supported, non-elementary probability measure on $\mathrm{Bir}(\mathbb{P}^2)$.
    \begin{enumerate}
        \item For $\mu^\mathbb{N}$-almost every $\omega \in \Omega$, there exists a nef class $\theta_\omega \in \partial\mathbb{H}$ such that 
        $$\lim_{n \rightarrow +\infty} \frac{1}{\deg(f^n_\omega)}(f^n_\omega)^*[L] = \theta_\omega$$
        in $l^2(\mathcal{B}(\mathbb{P}^2))$.
        \item There exists $A>0$ such that, for $\mu^\mathbb{N}$-almost every $\omega \in \Omega$, $$\lim_{n \rightarrow+\infty}\frac{1}{n}\log \deg(f^n_\omega) = A.$$
        \item Denote by $\nu$ the hitting measure on $\partial \mathbb{H}$, defined by $$\nu(A):=\mu^\mathbb{N}(\{ \omega \mid \theta_\omega \in A\}).$$
        The hitting measure is non-atomic, and if we assume moreover that $\mu$ is WPD, then $(\partial \mathbb{H}, \nu)$ is a model for the Poisson boundary of $(\mathrm{Bir}(\mathbb{P}^2),\mu)$.
    \end{enumerate}
\end{thm}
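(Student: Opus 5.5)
The plan is to derive Theorem \ref{thm2} from Theorem \ref{thm4}, applied to the isometric action of $\Gamma_\mu$ on the infinite dimensional hyperbolic space $\mathbb{H}$ with base point $x_0=[L]$. By Proposition \ref{prop27} and functoriality, the pullbacks $f^*$ act on $\mathbb{H}$ by isometries. Since $(f^n_\omega)^*=f_1^*\cdots f_n^*$, the sequence $(f^n_\omega)^*[L]$ is exactly the right random walk $r^n_\omega x_0$ for the measure $\mu$ transported to $\mathrm{Isom}(\mathbb{H})$ by $f\mapsto f^*$. Non-elementarity of $\mu$ on $\mathrm{Bir}(\mathbb{P}^2)$ was defined through loxodromic elements with disjoint fixed sets on $\partial\mathbb{H}$, so it transfers verbatim. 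The WPD condition is likewise phrased for the action on $\mathbb{H}$.

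For (2), I will use $d_\mathbb{H}([L],(f^n_\omega)^*[L])=\cosh^{-1}\langle (f^n_\omega)^*[L],[L]\rangle=\cosh^{-1}\deg(f^n_\omega)$, which follows from Proposition \ref{prop27}. Since $\cosh^{-1}(x)=\log x+O(1)$ as $x\to\infty$, item (2) of Theorem \ref{thm4} gives $\frac1n\log\deg(f^n_\omega)\to l=:A>0$.

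For (1), Theorem \ref{thm4}(1) gives convergence of $(f^n_\omega)^*[L]$ to a boundary point $\xi(\omega)\in\partial\mathbb{H}$ in the Gromov sense. I then need to translate this into norm convergence of $c_n/\langle c_n,[L]\rangle$ to an isotropic class $\theta_\omega$ with $\langle\theta_\omega,[L]\rangle=1$, where $c_n=(f^n_\omega)^*[L]$ and $\langle c_n,[L]\rangle=\deg(f^n_\omega)$. I expect this to be the main technical step, because of the identification of $\partial\mathbb{H}$ with isotropic lines \cite[Section 3.5.1]{DSU}. In the hyperboloid model, a sequence $c_n\in\mathbb{H}$ with $d(c_n,x_0)\to\infty$ converges in the Gromov sense iff the projective normalizations $c_n/\langle c_n,x_0\rangle$ converge in norm. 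Concretely, write $c_n=\deg(f^n_\omega)([L]+v_n)$ with $v_n\perp[L]$. Then $\|v_n\|^2=1-\deg(f^n_\omega)^{-2}$, so the $v_n$ lie near the unit sphere. The Gromov product $(c_n|c_m)_{x_0}$ tends to infinity exactly when $\|v_n-v_m\|\to0$; this comes from computing $\langle c_n,c_m\rangle$ in terms of $\deg\cdot\deg\cdot(1-\langle v_n,v_m\rangle)$. This makes the normalized sequence Cauchy in $l^2$, and its limit $\theta_\omega$ satisfies $\theta_\omega^2=0$ and $\langle\theta_\omega,[L]\rangle=1$.

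Nefness is a closedness argument. Each $c_n$ is nef because $f^*$ preserves the nef cone (Proposition \ref{prop27}) and $[L]$ is nef. The nef condition on each model is a finite set of inequalities $\langle\cdot,[D]\rangle\ge0$ against classes of effective divisors, and these are continuous in $l^2$. Hence $\theta_\omega$ is nef.

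For (3), the map $\xi\mapsto\theta$ is the above identification of the Gromov boundary with normalized isotropic classes, which is a bijection. Under it, the hitting measure of Theorem \ref{thm4}(3) pushes forward to $\nu$. Non-atomicity and the Poisson boundary statement under WPD therefore transfer directly from Theorem \ref{thm4}(3).
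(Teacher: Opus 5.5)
Your proposal is correct and follows the paper's proof. You apply Theorem \ref{thm4} to the right walk $f_1^*\cdots f_n^*[L]$ on $\mathbb{H}$, use $d_\mathbb{H}([L],(f^n_\omega)^*[L])=\cosh^{-1}\deg(f^n_\omega)$ for (2), use the equivalence between Gromov convergence and $l^2$-convergence of the normalized classes for (1) (which you justify via the Gromov product, whereas the paper only asserts it), and conclude nefness by a limiting argument. One small correction: the nef cone of a model is in general cut out by infinitely many inequalities $\langle\cdot,[D]\rangle\geq 0$, for instance after blowing up nine very general points; since each of these is a closed condition in $l^2$, your limiting argument goes through unchanged.
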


\begin{proof}
    The convergence of a sequence $(\theta_n) \in l^2(\mathcal{B}(\mathbb{P}^2))^\mathbb{N}$ normalized by $\langle \theta_n , [L]\rangle =1$ to a class $\theta $ in the Gromov boundary $\partial \mathbb{H}$ is equivalent to the convergence in $l^2$-norm.
    Thus, the first part follows from Theorem \ref{thm4}.(1) using that $$\langle (f^n_\omega)^*[L] , [L] \rangle= \deg(f^n_\omega).$$ Moreover, $\theta_\omega$ is nef as a limit of nef classes.
    The degrees are related to the drift of the walk by the following:
    \begin{align*}
        d_\mathbb{H}((f^n_\omega)^*[L],[L]) &= \cosh^{-1} (\langle f^n_\omega)^*[L], [L]\rangle) \\
        &=\cosh^{-1}(\deg(f^n_\omega))\\
        &\simeq \log \deg(f^n_\omega).
    \end{align*}
    and Theorem \ref{thm4}.(2) gives the second part.
    See also \cite[Theorem 1.2]{MT2}.  
\end{proof}

We write $$\theta_\omega=[L]-\sum_{p \in \mathcal{B}(\mathbb{P}^2)} \alpha_{p}(\omega) [E_p]$$
with $\alpha_p(\omega)\geq 0$ and $\sum \alpha_p(\omega)^2=1$.
Note that a loxodromic birational map is WPD if and only it is not conjugated to a monomial map \cite[Section 20.3]{La2}. The following lemma will be useful for the proof of Theorem \ref{thm1}.

\begin{lem}\label{lem4}
    Let $\mu$ be a finitely supported, non-elementary probability measure on $\mathrm{Bir}(\mathbb{P}^2)$. For $\mu^\mathbb{N}\otimes \nu$-almost every $(\omega,\theta) \in \Omega \times \partial \mathbb{H}$, there exists $C>0$ such that
    $$\langle f_n^*\cdots f_1^* \theta, [L] \rangle \geq C\deg(f_1\cdots f_n)$$
    for all $n \geq 1$.
\end{lem}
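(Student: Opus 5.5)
We reduce the statement to a Gromov product estimate in $\mathbb{H}$. Write $g_n:=f_1\cdots f_n$, so that $f_n^*\cdots f_1^*=g_n^*$ and, by Proposition \ref{prop27}, $\langle g_n^*[L],[L]\rangle=\deg(g_n)=\deg(f_1\cdots f_n)$. Since $g_n^*$ is an isometry of $l^2(\mathcal{B}(\mathbb{P}^2))$ and $f_*=(f^{-1})^*$ is its adjoint,
$$\langle g_n^*\theta,[L]\rangle=\langle \theta,(g_n)_*[L]\rangle=\langle\theta,(g_n^{-1})^*[L]\rangle .$$
Put $x_n:=(g_n^{-1})^*[L]/\deg(g_n)$. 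Then $\langle x_n,[L]\rangle=1$, and $\deg(g_n^{-1})=\deg(g_n)$ because $f^*$ and $f_*$ are adjoint. It therefore suffices to show that $\langle\theta,x_n\rangle$ stays bounded away from $0$ for all $n$.

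Now $g_n^{-1}=f_n^{-1}\cdots f_1^{-1}$, and $(g_n^{-1})^*=(f_1^{-1})^*\cdots(f_n^{-1})^*$ by functoriality. So $(g_n^{-1})^*[L]$ is exactly the right random walk of Theorem \ref{thm4} for the reflected measure $\check\mu$, the image of $\mu$ under $f\mapsto f^{-1}$. This measure is still finitely supported and non-elementary, since inverses of loxodromic elements are loxodromic with the same fixed points. Theorem \ref{thm2} applied to $\check\mu$ gives, for $\mu^\mathbb{N}$-almost every $\omega$, a limit $x_n\to\check\theta_\omega\in\partial\mathbb{H}$ in $l^2$. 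Its distribution is the hitting measure $\check\nu$, which is non-atomic.

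For fixed $\omega$, the set of $\theta$ with $\theta=\check\theta_\omega$ has $\nu$-measure zero because $\nu$ is non-atomic. By Fubini, for $\mu^\mathbb{N}\otimes\nu$-almost every $(\omega,\theta)$ we have $\theta\neq\check\theta_\omega$. Lemma \ref{lem2} then gives $\langle\theta,\check\theta_\omega\rangle>0$. The intersection form is continuous on $l^2$, so $\langle\theta,x_n\rangle\to\langle\theta,\check\theta_\omega\rangle>0$. Hence $\langle\theta,x_n\rangle\geq \frac12\langle\theta,\check\theta_\omega\rangle$ for all $n\geq N$.

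For the finitely many $n<N$ we need each $\langle\theta,x_n\rangle>0$. Here $x_n$ lies in $\mathbb{H}$ up to scaling, being the image of $[L]$ under an isometry: it has positive square and positive pairing with $[L]$. The nonzero isotropic class $\theta$ pairs strictly positively with any such class, by the reverse Cauchy--Schwarz inequality in signature $(1,\infty)$. Taking $C$ to be the minimum over these finitely many values and $\frac12\langle\theta,\check\theta_\omega\rangle$ gives the statement.

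The main point to check is the identification of $(g_n^{-1})^*[L]$ with a walk to which Maher--Tiozzo applies, including the conventions for left versus right products. The required measurability, namely that $\omega\mapsto\check\theta_\omega$ is measurable so that Fubini applies, is standard.
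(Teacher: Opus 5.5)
Your proof is correct and is essentially the paper's argument. You rewrite the pairing by adjointness as $\deg(f_1\cdots f_n)\,\langle\theta,\frac{1}{\deg(f_1\cdots f_n)}(f_1\cdots f_n)_*[L]\rangle$, apply Theorem \ref{thm2} to the reflected measure $\check\mu$, and use the non-atomicity of $\nu$ together with Lemma \ref{lem2} to get a positive limit. Your extra step for the finitely many $n<N$, via the reverse Cauchy--Schwarz inequality, supplies a detail the paper leaves implicit, since it only concludes ``for $n$ large enough''.
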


\begin{proof}
    Consider $(\omega, \theta)\in \Omega \times \partial\mathbb{H}.$ By Proposition \ref{prop27}, we have
    \begin{align*}
        \langle f_n^*\cdots f_1^* \theta, [L] \rangle &= \langle \theta, (f_{1})_* \cdots (f_{n})_* [L]\rangle \\
        &=\deg(f_1\cdots f_n) \left\langle \theta, \frac{1}{\deg(f_1 \cdots f_n)} (f_1 \cdots f_n)_*[L]\right\rangle.
    \end{align*}
    Define the reflected law $\check\mu$ by $\check\mu(g):=\mu(g^{-1})$ for any $g\in G$. Note that $\check\mu$ is non-elementary. We apply Theorem \ref{thm2} to the reflected law. We obtain the following: for $\check \mu^\mathbb{N}$-almost every $\omega \in \Omega$, there exists $\theta_\omega' \in \partial \mathbb{H}$ such that
    $$\frac{1}{\deg(f_1\cdots f_n)}(f_1\cdots f_n)_* [L] \longrightarrow \theta_\omega'.$$
    Now, by Theorem \ref{thm2}.(3), for any $\theta'_\omega \in \partial \mathbb{H}$, the subset $\{\theta=\theta_\omega'\}\subset \partial  \mathbb{H}$ has $\nu$-measure zero, and thus
    $$\mu^\mathbb{N}\otimes\nu (\{(\omega,\theta)\in \Omega \times \partial \mathbb{H} \mid \theta_\omega' = \theta \})=0.$$
    Then, by Lemma \ref{lem2}, we have $\langle \theta, \theta_\omega'\rangle >0$ for $\mu^\mathbb{N}\otimes\nu$-almost every $(\omega,\theta)$ and there exists $C=C_{\omega,\theta}>0$, such that $$\left\langle \theta, \frac{1}{\deg(f_1 \cdots f_n)} (f_1 \cdots f_n)_*[L]\right\rangle \geq C $$
    for $n$ large enough.
\end{proof}

\subsection{A result of Gouëzel and Karlsson}
The following theorem, due to Gouëzel and Karlsson \cite{GK} is a refinement of Kingman's subadditive theorem.

\begin{thm}[Gouëzel--Karlsson]\label{thm5}
We denote by $\sigma$ the shift on $\Omega$ and we consider a subadditive cocycle $a(n,.)$ over $(\Omega, \mu^\mathbb{N}, \sigma)$ with $a(1,.) \in L^1(\Omega, \mu^\mathbb{N})$. We define the asymptotic average $$A:=\inf \frac{1}{n } \int a(n,\omega) \, d\mu^\mathbb{N}(\omega).$$
For $\mu^\mathbb{N}$-almost every $\omega \in\Omega$, there exist an increasing sequence of integers $(n_i)$, and a sequence of positive real numbers $\delta_l \rightarrow 0$, such that, for every $i$ and every $l \leq n_i$, 
$$-l\delta_l \leq a(n_i,\omega)-a(n_i-l,\sigma^l\omega)-Al \leq l\delta_l.$$
\end{thm}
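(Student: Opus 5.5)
This is the theorem of Gouëzel and Karlsson, and in the paper it is only quoted from \cite{GK}. If I had to prove it, I would split the two-sided estimate into an easy upper half and a hard lower half. Throughout, write $a_n:=a(n,\omega)$.

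\emph{Upper half.} By Kingman's subadditive theorem, $a(l,\omega)/l\to A$ for $\mu^\mathbb{N}$-almost every $\omega$. Subadditivity gives
$$a(n,\omega)\leq a(l,\omega)+a(n-l,\sigma^l\omega).$$
Hence $a(n,\omega)-a(n-l,\sigma^l\omega)-Al\leq a(l,\omega)-Al\leq l\delta^+_l$, where $\delta^+_l:=\sup_{k\geq l}\max(0,a(k,\omega)/k-A)\to 0$. This holds for every $n\geq l$, not only along a subsequence, so the right-hand inequality is automatic.

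\emph{Lower half: fixed tolerance.} I need times $n$ at which $a(n,\omega)-a(n-l,\sigma^l\omega)\geq (A-\varepsilon)l$ for all $l\leq n$, at least once $l$ exceeds some threshold $K(\varepsilon,\omega)$. Call $n$ \emph{$(\varepsilon,K)$-good} if this holds for all $K\leq l\leq n$.

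The first goal is to show that, for fixed $\varepsilon$, the set of $(\varepsilon,K)$-good times has upper density at least $1-\eta(\varepsilon,K)$, where $\eta\to0$ as $K\to\infty$.

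To get this, I would run a Riesz ``rising sun'' covering argument along the orbit. If $n$ is bad, there is an interval $[n-l,n]$ with $l\geq K$ on which the increment $a(n,\omega)-a(n-l,\sigma^l\omega)$ is less than $(A-\varepsilon)l$. I would extract a disjoint subfamily of such intervals covering a definite proportion of the bad times in $[0,N]$. Chaining subadditivity along the gaps and the selected intervals, $a(N,\omega)$ is at most $(A-\varepsilon)$ times the covered length plus $a(\cdot)$ of the gaps. The gaps are controlled by the Kingman averages, which is legitimate once $K$ is large. If the bad times had density bounded below, this would force $\limsup a(N,\omega)/N<A$, contradicting Kingman.

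One must take care that the gap pieces are evaluated at shifted points $\sigma^j\omega$, where Kingman only holds on average. I would handle this by replacing pointwise Kingman with Birkhoff's theorem applied to the indicator of $\{\omega: a(k,\omega)\leq (A+\varepsilon)k\ \text{for all}\ k\geq K\}$, whose measure tends to $1$ as $K\to\infty$.

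\emph{Lower half: diagonalization.} Take sequences $\varepsilon_k\downarrow0$ and $K_k\uparrow\infty$ with $\sum_k\eta(\varepsilon_k,K_k)<1$. Then, for a fixed finite $m$, the good sets for $k=1,\dots,m$ have upper-density intersection that is nonempty, in fact infinite. To make the choice uniform in all scales simultaneously, I would arrange the thresholds so that the densities are computed along a common subsequence of $N$'s. This yields an increasing sequence $n_i$ that is $(\varepsilon_k,K_k)$-good for all $k\leq i$.

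For $l<K_1$ there are only finitely many values of $l$. There I would use the crude bound $a(n,\omega)-a(n-l,\sigma^l\omega)\geq -\sum_{j<l}|a(1,\sigma^j\omega)|$-type estimates, which come from subadditivity in the other direction applied to $a(n-l,\sigma^l\omega)$. Then I would define $\delta_l$ by taking the maximum of $\delta_l^+$, $\varepsilon_k$ for $K_k\leq l<K_{k+1}$, and the finitely many small-$l$ constants.

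\emph{Main obstacle.} I expect the hard step to be the density estimate in the covering argument, specifically making the gap bound uniform via Birkhoff rather than pointwise Kingman. A secondary difficulty is ensuring that a single sequence $(n_i)$ works for every $\varepsilon_k$ at once, so that $\delta_l\to0$ genuinely.
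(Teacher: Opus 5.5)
The paper gives no proof of this statement; it quotes Gou\"ezel--Karlsson, so your sketch has to stand on its own. Your upper half is fine, but the lower half has a genuine gap: the covering argument is run on the wrong family of intervals. The quantity $a(n,\omega)-a(n-l,\sigma^l\omega)$ is not the increment of the cocycle over $[n-l,n]$. It is what is lost when the \emph{initial} block $[0,l)$ is removed from $[0,n)$. So a bad time gives no interval on which the cocycle is small: subadditivity only yields $a(n,\omega)-a(n-l,\sigma^l\omega)\le a(l,\omega)$, which is the useless direction. Moreover, the defects attached to different bad times of the same $\omega$ all sit on blocks starting at $0$, so they cannot be made disjoint and chained. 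A bad time can be used only once, through $a(N,\omega)\le a(n,\omega)+a(N-n,\sigma^n\omega)$. This gives $a(N,\omega)\le AN-\varepsilon l+o(N)$, which contradicts Kingman only when $l$ is of order $N$. The mesoscopic scales $K\le l\ll n$, which are the whole difficulty, are untouched. Hence your density bound for the good times of a fixed $\omega$ does not follow from the argument you describe.

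What does chain is badness with a fixed terminal time and a varying starting point. If $(\sigma^j\omega,N-j)$ is bad with witness $l$, then $a(N-j,\sigma^j\omega)<(A-\varepsilon)l+a(N-j-l,\sigma^{j+l}\omega)$, and you continue from $(\sigma^{j+l}\omega,N-j-l)$, which has the same terminal time $N$. Your rising-sun and Birkhoff bookkeeping can be run for this family, and it shows that for a.e.\ $\omega$ and large $N$ only a small proportion of $j\le N$ are bad. That is a statement about the points $\sigma^j\omega$, not about one orbit. The missing step is the transfer through shift-invariance of $\mu^{\mathbb N}$. If $B_n$ is the set of $\omega$ for which $n$ is bad, then $\frac1N\sum_{n\le N}\mu^{\mathbb N}(B_n)$ equals the expected proportion of bad $j$. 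Fatou's lemma applied to $\frac1N\sum_{n\le N}\mathbf 1_{B_n^c}$ then gives good times of positive upper density for $\omega$ in a set of measure close to $1$. Working with measures is also what makes your diagonalization possible. There the union bound $\sum_k\eta_k<1$ over all scales is taken for each $n$ before Fatou is applied. By contrast, pathwise sets of upper density $\ge 1-\eta_k$ realized along unrelated subsequences of $N$ can have empty intersection, and you never supply the ``common subsequence''. Finally, your small-$l$ step is wrong as stated: subadditivity never bounds $a(n,\omega)-a(n-l,\sigma^l\omega)$ from below. For instance, a first matrix that kills the expanding direction of the subsequent ones makes this difference as negative as one likes while $a(1,\omega)=0$. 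Instead, use goodness at scale $K_1$ together with $a(n-l,\sigma^l\omega)\le a(K_1-l,\sigma^l\omega)+a(n-K_1,\sigma^{K_1}\omega)$. This gives $a(n,\omega)-a(n-l,\sigma^l\omega)\ge (A-\varepsilon_1)K_1-a(K_1-l,\sigma^l\omega)$ for $l<K_1$, a bound independent of $n$.
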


Applying Theorem \ref{thm5} to the subadditive cocycle $a(n,\omega):=\log \deg(f^n_\omega)$ yields the following: for $\mu^\mathbb{N}$-almost every $\omega\in\Omega$, there exist $n_i\rightarrow \infty$ and $\delta_l \rightarrow 0$ such that for every $i$ and every $l \leq n_i$, 
$$\frac{\deg(f^{n_i-l}_{\sigma^l(\omega)})}{\deg(f^{n_i}_\omega)} \leq \exp(-(A-\delta_l)l).$$
Remark that $A$ is equal to the drift of the walk given by Theorem \ref{thm2}.(2). The following lemma will be useful for the proof of Theorem \ref{thm1}.
\begin{lem}\label{lem5}
    Let $\mu$ be a finitely supported, non-elementary probability measure on $\mathrm{Bir}(\mathbb{P}^2)$. For $\mu^\mathbb{N}$-almost every $\omega \in \Omega$, there exist $C>0$ and $\rho\in (0,1)$ such that
    $$\langle (f^n_\omega)_*\theta_\omega, [L]\rangle \leq C \rho^n.$$
\end{lem}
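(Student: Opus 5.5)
The idea is to express $\langle (f^n_\omega)_*\theta_\omega,[L]\rangle$ as a limit of ratios of degrees, and then bound these ratios with the consequence of Theorem \ref{thm5} (Gouëzel--Karlsson) stated just before the lemma. Fix $\omega$ in the full-measure set where Theorem \ref{thm2}.(1) holds and where the Gouëzel--Karlsson conclusion holds with integers $n_i\to\infty$ and numbers $\delta_l\to 0$.

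\emph{Step 1: a cocycle identity.} For $m\geq n$ we have $f^m_\omega=f^{m-n}_{\sigma^n\omega}\circ f^n_\omega$. Functoriality gives $(f^m_\omega)^*=(f^n_\omega)^*(f^{m-n}_{\sigma^n\omega})^*$. Since $(f^n_\omega)_*=((f^n_\omega)^{-1})^*$ is inverse to $(f^n_\omega)^*$, we get
$$(f^n_\omega)_*(f^m_\omega)^*[L]=(f^{m-n}_{\sigma^n\omega})^*[L].$$
Pairing with $[L]$ and using Proposition \ref{prop27} then gives
$$\Bigl\langle (f^n_\omega)_*\tfrac{(f^m_\omega)^*[L]}{\deg(f^m_\omega)},[L]\Bigr\rangle=\frac{\deg(f^{m-n}_{\sigma^n\omega})}{\deg(f^m_\omega)}.$$

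\emph{Step 2: passing to the limit in $m$.} By Theorem \ref{thm2}.(1), $(f^m_\omega)^*[L]/\deg(f^m_\omega)\to\theta_\omega$ in $l^2(\mathcal{B}(\mathbb{P}^2))$. By Proposition \ref{prop27} and the adjoint relation, we can write
$$\langle (f^n_\omega)_*c,[L]\rangle=\langle c,(f^n_\omega)^*[L]\rangle.$$
Since $(f^n_\omega)^*[L]$ is a fixed element of the Picard--Manin space (it is even a Cartier class), the map $c\mapsto\langle (f^n_\omega)_*c,[L]\rangle$ is continuous for the $l^2$-norm. This avoids any discussion of the boundedness of $(f^n_\omega)_*$ itself. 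Therefore
$$\langle (f^n_\omega)_*\theta_\omega,[L]\rangle=\lim_{m\to\infty}\frac{\deg(f^{m-n}_{\sigma^n\omega})}{\deg(f^m_\omega)}.$$
The limit exists, so we may compute it along the subsequence $m=n_i$.

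\emph{Step 3: the exponential bound.} Take $l=n\leq n_i$ in the Gouëzel--Karlsson estimate. It gives
$$\frac{\deg(f^{n_i-n}_{\sigma^n\omega})}{\deg(f^{n_i}_\omega)}\leq e^{-(A-\delta_n)n}.$$
Letting $i\to\infty$ yields $\langle (f^n_\omega)_*\theta_\omega,[L]\rangle\leq e^{-(A-\delta_n)n}$ for every $n$. Here $A>0$ by Theorem \ref{thm2}.(2). Choose $n_0$ such that $\delta_n\leq A/2$ for all $n\geq n_0$, and set $\rho:=e^{-A/2}\in(0,1)$. Then the bound $\rho^n$ holds for all $n\geq n_0$. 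The finitely many indices $n<n_0$ are absorbed by taking
$$C:=\max\Bigl(1,\ \max_{n<n_0}\langle (f^n_\omega)_*\theta_\omega,[L]\rangle\,\rho^{-n}\Bigr),$$
and both $C$ and $\rho$ depend on $\omega$, as the statement allows.

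The main point requiring care is Step 2: justifying that the pairing with $[L]$ commutes with the $l^2$-limit defining $\theta_\omega$. The adjoint formula in Proposition \ref{prop27} reduces this to continuity of $c\mapsto\langle c,(f^n_\omega)^*[L]\rangle$. Everything else is bookkeeping with the cocycle identity and Theorem \ref{thm5}.
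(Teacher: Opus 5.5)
Your proof is correct and follows the paper's argument. The paper writes the quantity as $\deg(f^n_\omega)\langle\theta_\omega,\theta_n\rangle$ and computes $\langle\theta_m,\theta_n\rangle=\deg(f^{m-n}_{\sigma^n\omega})/(\deg(f^m_\omega)\deg(f^n_\omega))$ via the isometry property, which is equivalent to your identity $(f^n_\omega)_*(f^m_\omega)^*=(f^{m-n}_{\sigma^n\omega})^*$; it then passes to the limit along the Gouëzel--Karlsson times $n_i$, and your explicit choice of $\rho=e^{-A/2}$ and $C$ just fills in the final step the paper leaves implicit.
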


\begin{proof}
    By Proposition \ref{prop27}, we have $ \langle (f^n_\omega)_*\theta_\omega, [L]\rangle = \langle \theta_\omega, (f^n_\omega)^*[L]\rangle $. We define $$\theta_n:=\frac{1}{\deg(f^n_\omega)}(f^n_\omega)^*[L]$$
    so that we have $ \langle (f^n_\omega)_*\theta_\omega, [L]\rangle =\deg(f^n_\omega)\langle \theta_\omega, \theta_n \rangle$.
    If $m >n$, then we can write
    \begin{align*}
        \langle \theta_m, \theta_n \rangle &= \frac{1}{\deg(f^m_\omega) \deg(f^n_\omega)} \langle (f^m_\omega)^*[L], (f^n_\omega)^*[L] \rangle \\
        &=\frac{1}{\deg(f^m_\omega) \deg(f^n_\omega)}\langle (f^{m-n}_{\sigma^n(\omega)})^*[L], [L] \rangle \\
        &=\frac{\deg(f^{m-n}_{\sigma^n(\omega)})}{\deg(f^m_\omega) \deg(f^n_\omega)}.
    \end{align*}
    Along the increasing sequence $(n_i) $ given by Theorem \ref{thm5}, we have
    $$\langle \theta_{n_i}, \theta_n \rangle  \leq \frac{1}{\deg(f^{n}_\omega)}  \exp(-(A-\delta_n)n).$$
    As $\theta_{n_i} $ converges to $ \theta_\omega$, we obtain $$\deg(f^n_\omega) \langle \theta_\omega, \theta_n \rangle \leq \exp(-(A-\delta_n)n)$$
    and this concludes the proof.   
\end{proof}

\section{Boundary currents}\label{section4}
Consider a finitely supported, non-elementary and WPD probability measure $\mu$ on $\mathrm{Bir}(\mathbb{P}^2)$. Let $S_{\mathbb{P}^2}\in \mathcal{D}(\mathbb{P}^2)$ be a positive closed $(1,1)$ current of mass one on $\mathbb{P}^2$. For any $\omega \in \Omega$, we consider the sequence of Cartier currents 
$$T_{n,\omega}:=\frac{1}{\deg(f^n_\omega)} (f^n_\omega)^*S_{\mathbb{P}^2}.$$
\begin{prop}\label{prop77}
    The sequence $(\Vert T_{n,\omega}\Vert_{X,\pi})$ is bounded for any model $(X,\pi)$.
\end{prop}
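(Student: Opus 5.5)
The mass of a positive Weil current on a model is computed cohomologically. Since $T_{n,\omega}$ is positive, I will use the identity $\Vert T_{n,\omega}\Vert_{X,\pi} = \langle [T_{n,\omega}], [\kappa_{X,\pi}]\rangle$ recorded before Lemma \ref{lem3}. The class $[T_{n,\omega}]$ depends only on $[S_{\mathbb{P}^2}] = [L]$, because $S_{\mathbb{P}^2}$ has mass one. Hence
$$[T_{n,\omega}] = \theta_n := \frac{1}{\deg(f^n_\omega)}(f^n_\omega)^*[L],$$
and the problem reduces to bounding $\langle \theta_n, [\kappa_{X,\pi}]\rangle$ uniformly in $n$ for each fixed model $(X,\pi)$.

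For the bound, fix $(X,\pi)$ with base points $p_1,\dots,p_k$. The Kähler class $[\kappa_{X,\pi}]$ lies in $H^{1,1}_\mathbb{R}(X,\pi)$, so it is a Cartier class with finite expansion $b_L[L] + \sum_{j=1}^k b_j [E_{p_j}]$. By orthonormality of the canonical basis, writing $\theta_n = [L] - \sum_p a_p(n)[E_p]$ (the $[L]$-coefficient is $\langle \theta_n,[L]\rangle = 1$ by Proposition \ref{prop27}), we get
$$\langle \theta_n,[\kappa_{X,\pi}]\rangle = b_L + \sum_{j} b_j a_{p_j}(n).$$
It therefore suffices to bound each coefficient $a_p(n)$ uniformly in $n$.

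The coefficients are controlled by the hyperbolic geometry. Since $(f^n_\omega)^*$ is an isometry of the intersection form, $\theta_n^2 = 1/\deg(f^n_\omega)^2 \ge 0$, so
$$\sum_p a_p(n)^2 \le 1.$$
This gives $|a_p(n)| \le 1$ for every $p$, and hence
$$\Vert T_{n,\omega}\Vert_{X,\pi} \le |b_L| + \sum_j |b_j|,$$
which is independent of $n$ and of $\omega$. As a cross-check, one could instead use that $\theta_n$ is nef and $[\kappa]$ is nef, giving positivity; but the Cauchy--Schwarz-type bound above already yields the uniform upper bound.

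The main point to check carefully is the first step, namely that the mass of the realization on $X$ of the Cartier current $T_{n,\omega}$ equals the intersection of its Weil class with $[\kappa_{X,\pi}]$. This requires that taking realizations commutes with taking classes, which holds because pushforwards and pullbacks preserve exactness. Apart from this, the argument is a routine computation.
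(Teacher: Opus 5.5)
Your proof is correct and shares the paper's skeleton: the mass is the pairing of $\frac{1}{\deg(f^n_\omega)}(f^n_\omega)^*[L]$ with $[\kappa_{X,\pi}]$, expanded in the canonical basis. The difference lies in how the cross terms are controlled.

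The paper uses nefness. Both $(f^n_\omega)^*[L]$ and $[\kappa_{X,\pi}]$ are nef, by the preservation of the nef cone in Proposition~\ref{prop27}. So in the expansions $\deg(f^n_\omega)[L]-\sum a_p[E_p]$ and $a_L[L]-\sum b_p[E_p]$ all exceptional coefficients are nonnegative. The cross terms $-\sum a_pb_p$ are therefore nonpositive and can be dropped, which gives the sharper bound $\Vert T_{n,\omega}\Vert_{X,\pi}\le a_L$, the $[L]$-coefficient of $[\kappa_{X,\pi}]$.

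You ignore signs and use the isometry property instead. Since $\theta_n^2=\deg(f^n_\omega)^{-2}\ge 0$, you get $\sum_p a_p(n)^2\le 1$. So $\theta_n$ stays in a fixed ball of $l^2(\mathcal{B}(\mathbb{P}^2))$, and its pairing with any fixed Cartier class is uniformly bounded.

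Your bound $|b_L|+\sum_j|b_j|$ is cruder than $a_L$, but it needs no positivity of the test class. The paper's route needs nefness of both classes but not the isometry property of $(f^n_\omega)^*$.

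One small correction to your closing remark: the nef route is not just a positivity cross-check. The sign of the cross terms it provides is exactly what produces the upper bound in the paper's proof.
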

\begin{proof}
We have
    \begin{align*}
    \Vert T_{n,\omega} \Vert_{X,\pi} &= \frac{1}{\deg(f^n_\omega)}\langle (f^n_\omega)^*[S_{\mathbb{P}^2}] , [\kappa_{X,\pi}]\rangle \\
    & = \frac{1}{\deg(f^n_\omega)} \left\langle \deg(f^n_\omega)[L] - \sum a_p [E_p],\,  a_L [L] -\sum b_p [E_p] \right\rangle
\end{align*}
with $a_L,a_p, b_p \geq 0$ as the classes $(f^n_\omega)^*[L]$ and $[\kappa_{X,\pi}]$ are nef. Thus, we obtain 
$$\Vert T_{n,\omega} \Vert_{X,\pi} \leq a_L$$
and this concludes the proof.
\end{proof}

We denote by $\mathcal{E}_\omega$ the set of all limits of subsequences of $(T_{n,\omega})_{n \geq 1}$. We denote also by $\mathcal{C}_\omega$ the set of positive closed Weil currents that are cohomologous to $\theta_\omega$. By Theorem \ref{thm2}, we have $\mathcal{E_\omega} \subset \mathcal{C}_\omega$. We work with a countable set of models given by Proposition \ref{prop28} and we obtain that $\mathcal{E}_\omega$ is non empty for $\mu^\mathbb{N}$-almost every $\omega \in \Omega$ by Lemma \ref{lem3} and Proposition \ref{prop77}. Thus, to show that $(T_{n,\omega})_{n \geq 1}$ converges, it suffices to show that $\mathcal{E}_\omega$ contains at most one current.

It appears that it is easier to work with limit currents that are diffuse, that is, they do not charge curves in any model. In particular, such currents are strict transforms of their realization on $\mathbb{P}^2$. Denote by $\mathcal{C}^0_\omega \subset \mathcal{C}_\omega$ the subset of currents that are cohomologous to $\theta_\omega$ and that are diffuse. A crucial observation is the following: by Lemma \ref{lem1}, two currents in $\mathcal{C}^0_\omega$ have the same Lelong numbers at every point, in every model. We may hope that $\mathcal{E}_\omega \subset \mathcal{C}^0_\omega$, that is, limit currents do not charge curves, but we are not able to prove it in general. In this section, we show that this is the case under the following hypothesis: all the base points of $f^n_\omega$ are almost surely on $\mathbb{P}^2$ for all $n\geq 1.$ We will show in the next section that this assumption is satisfied for a generic $\mu$.

\subsection{Base points}
Recall that the base points of a birational map are the points blown-up in a minimal resolution. These points are in $\mathcal{B}(\mathbb{P}^2)$ and we have the following dichotomy: a base point $p$ is either defined on $\mathbb{P}^2$ and we say that $p$ is \textbf{proper}, or $p$ has order at least one. Throughout this section, we work under the following assumption.

\begin{hyp}\label{hyp1}
    We suppose that $\mu$ satisfies the following: for $\mu^\mathbb{N}$-almost every $\omega=(f_n)_{n\geq 1}$ and every $n\geq 1$, all the base points of $f^n_\omega$ are proper.
\end{hyp}

In particular, we can write $$\theta_\omega=[L]-\sum_{p \in \mathbb{P}^2} \alpha_p(\omega) [E_p]$$
with $\alpha_p(\omega) \geq 0$ and $\sum \alpha_p(\omega)^2 =1.$

\subsection{Limit currents do not charge curves on the plane}

The above assumption allows us to show that the limit currents in $\mathcal{E}_\omega$ do not charge curves in any model. We first prove it in the case of $\mathbb{P}^2$. Actually, we prove the stronger result that any current in $\mathcal{C}_\omega$ do not charge curves in any model.

\begin{prop}\label{prop1}
    We work under Assumption \ref{hyp1}. For $\mu^\mathbb{N}$-almost every $\omega \in \Omega$, for every $T\in \mathcal{C}_\omega$, the realization of $T$ on $\mathbb{P}^2$ does not charge any curve. 
\end{prop}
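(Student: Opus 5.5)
The plan is to argue by contradiction: a curve charged by the realization on $\mathbb{P}^2$ should force a lower bound on $\langle (f^n_\omega)_*\theta_\omega,[L]\rangle$, which contradicts the exponential decay of Lemma \ref{lem5}. Take $\omega$ in the full-measure set where Theorem \ref{thm2}, Lemma \ref{lem5} and Assumption \ref{hyp1} hold. Let $T\in\mathcal{C}_\omega$ and suppose $T_{\mathbb{P}^2}$ charges an irreducible curve $\mathcal{C}$ of degree $e$. By Siu's theorem, $T_{\mathbb{P}^2}=c\,\mathcal{C}+R_{\mathbb{P}^2}$ with $c>0$ and $R_{\mathbb{P}^2}$ positive. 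On each model, $T_{X,\pi}$ is the strict transform of $T_{\mathbb{P}^2}$ plus a positive combination of exceptional divisors. Hence $T-c\,\widetilde{\mathcal{C}}$ is a positive Weil current, and $R:=T-c\widetilde{\mathcal{C}}\in W^+(\mathbb{P}^2)$ is cohomologous to $\theta_\omega-c[\widetilde{\mathcal{C}}]$. Note that $[\widetilde{\mathcal{C}}]=e[L]-\sum_p m_p(\mathcal{C})[E_p]$ is a Cartier class with integer coefficients.

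The first key step is to push forward by $f^n_\omega$. The pushforward $(f^n_\omega)_*=((f^n_\omega)^{-1})^*$ preserves positive Weil currents, and intersecting a positive Weil class with the nef class $[L]$ gives a nonnegative number (compute on $\mathbb{P}^2$, where it is a mass). Therefore
$$\langle (f^n_\omega)_*\theta_\omega,[L]\rangle = c\,\langle (f^n_\omega)_*[\widetilde{\mathcal{C}}],[L]\rangle+\langle (f^n_\omega)_*[R],[L]\rangle\geq c\,\langle (f^n_\omega)_*[\widetilde{\mathcal{C}}],[L]\rangle .$$
By Lemma \ref{lem5}, the left-hand side is at most $C\rho^n$. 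On the other hand, $(f^n_\omega)_*[\widetilde{\mathcal{C}}]$ is the class of the strict transform of a curve, namely the image of $\mathcal{C}$ in a model. So $\langle (f^n_\omega)_*[\widetilde{\mathcal{C}}],[L]\rangle$ is a nonnegative integer. It equals the degree of $f^n_\omega(\mathcal{C})$ when $\mathcal{C}$ is not contracted to a point of $\mathbb{P}^2$, and it is $0$ otherwise. For $n$ large this integer must therefore vanish, i.e. $f^n_\omega$ contracts $\mathcal{C}$ for all large $n$. When $[\widetilde{\mathcal{C}}]^2=0$ there is a cleaner route: the isometry $(f^n_\omega)_*$ keeps the class nonzero and isotropic, and a nonzero isotropic class has nonzero pairing with $[L]$, which already gives a contradiction. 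In general, however, we need the contraction argument.

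The main obstacle is to rule out that $\mathcal{C}$ is contracted by $f^n_\omega$ for every large $n$. A curve contracted by $f^n_\omega$ is sent to a base point of $(f^n_\omega)^{-1}$. After that, applying $f_{n+1}$ either keeps it in an exceptional position or, if the point is an indeterminacy point of $f_{n+1}$, blows it back up to a curve of positive degree. I would try to show that, almost surely, the image of $\widetilde{\mathcal{C}}$ cannot remain forever of the form $\sum \lambda_q[E_q]-\ldots$ with $[L]$-coefficient $0$. To do this I would compare with the degree bookkeeping in Assumption \ref{hyp1}: the point is that contraction of an actual curve followed by persistence through all later steps would produce non-proper base points of some $f^m_\omega$ (infinitely near points over the contracted image), violating the assumption. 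Concretely, I would write $(f^n_\omega)_*[\widetilde{\mathcal{C}}]$ in the canonical basis, use that it is the class of an irreducible curve in some model, and show that its $E_q$-support lies over proper points. Then, using that the next map $f_{n+1}$ has only proper base points, I would argue that the contracted point is eventually hit as an indeterminacy point, producing positive degree infinitely often. This last step is where I expect the real work, and possibly where a further genericity input from the next section is needed.
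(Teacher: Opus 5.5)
There is a genuine gap, in your final step. Everything before it is correct: the splitting $T=c\,\widetilde{\mathcal{C}}+R$ with $R\in W^+(\mathbb{P}^2)$, the positivity of $(f^n_\omega)_*$, and the bound $c\,\langle (f^n_\omega)_*[\widetilde{\mathcal{C}}],[L]\rangle\leq C\rho^n$, which forces $f^n_\omega$ to contract $\mathcal{C}$ for all large $n$.

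What you then try to prove, that the contracted curve is ``blown back up'' infinitely often, is false in exactly the setting where the proposition is used. Extra genericity makes it worse, not better. Indeed
\[
\deg(wu)=\langle w^*[L],u_*[L]\rangle=\deg(w)\deg(u)-\sum_p m_p(w)\,m_p(u^{-1}),
\]
so multiplicativity of degrees on reduced words (Theorem \ref{thm3}) says that $w$ and $u^{-1}$ share no base point. Suppose $u$ contracts $\mathcal{C}$ to a proper base point $q$ of $u^{-1}$. Then $w$ is regular at $q$, so $wu$ still contracts $\mathcal{C}$. The reduced word of $f^n_\omega$ has length tending to infinity, so its right-hand end $u$ eventually stabilizes. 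Hence every curve contracted by $u$ is contracted by $f^n_\omega$ for all large $n$. For such a curve $\langle (f^n_\omega)_*[\widetilde{\mathcal{C}}],[L]\rangle$ is eventually $0$, your inequality degenerates to $0\leq C\rho^n$, and nothing in Assumption \ref{hyp1} restores positive degree.

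The missing idea is to play the smallness of $m_n:=\langle (f^n_\omega)_*\theta_\omega,[L]\rangle$ against the charge that $S_n:=(f^n_\omega)_*T$ puts on the \emph{exceptional} curve, not on an image curve of positive degree. Suppose $f^n_\omega$ contracts $\mathcal{C}$ to a base point $q$ of $(f^n_\omega)^{-1}$. Assume these base points are proper; the paper also needs this for $\sigma_n$. Then the realization of $S_n$ on the blow-up of $q$ charges $E_q$ with coefficient at least $c$. By Lemmas \ref{lem6} and \ref{lem8}, that coefficient equals $\nu(S_{n,\mathbb{P}^2},q)-\langle [S_n],[E_q]\rangle$. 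Here $\langle [S_n],[E_q]\rangle\geq 0$ because $(f^n_\omega)_*\theta_\omega$ is nef (Proposition \ref{prop27}), and $\nu(S_{n,\mathbb{P}^2},q)\leq m_n$ since Lelong numbers are bounded by the mass. So $c\leq m_n\leq C\rho^n$ by Lemma \ref{lem5}, a contradiction. The same inequality $c\leq m_n$ also covers your non-contracted case.

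The paper runs this mechanism through Lelong numbers. It writes $T=m_n(f^n_\omega)^*R_n$ with $R_n:=S_n/m_n$. Nefness and Lemma \ref{lem6} then bound $\nu((f^n_\omega)^*R_n,q)$ uniformly in $n$ for $q$ outside the countable set $\mathcal{B}_\omega$. This gives $\nu(T_{\mathbb{P}^2},q)=0$ off a countable set, which rules out charged curves whether or not they are contracted.
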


\begin{proof}
    Consider $T\in \mathcal{C}_\omega$. Define $\mathcal{B}_\omega$ to be the union of base points of $f^n_\omega$ for $n\geq 1$. By Assumption \ref{hyp1}, $\mathcal{B}_\omega$ is almost surely a countable subset of $\mathbb{P}^2$. We will show that the Lelong number of $T_{\mathbb{P}^2}$ is equal to zero for any $q \in \mathbb{P}^2 \setminus \mathcal{B}_\omega$. We denote by $S_n:=(f^n_\omega)_*T$ the pushforward of $T$ by $f^n_\omega$, and by $m_n:=\Vert S_{n,\mathbb{P}^2}\Vert $ the mass of its realization on $\mathbb{P}^2$. We then consider the normalized sequence 
    $$R_n:=\frac{1}{m_n}S_n$$
    so that we have $T=m_n (f^n_\omega)^*R_n $ and the sequence $(R_{n,\mathbb{P}^2})$ is relatively compact. We have $$\nu(T,q)=m_n \nu((f^n_\omega)^*R_n,q).$$
    The next two lemmas show that $m_n\rightarrow 0$ and $\nu((f^n_\omega)^*R_n,q)$ is bounded for any $q \in \mathbb{P}^2 \setminus \mathcal{B}_\omega$.
    
    \begin{lem}\label{lem31}
        The sequence $(m_n)$ converges to zero for $\mu^\mathbb{N}$-almost every $\omega \in \Omega$.
    \end{lem}

    \begin{proof}
        Recall that we have
       $ m_n=\langle [S_{n,\mathbb{P}^2}],[L]\rangle.$
    Consider a resolution $(X_n,\pi_n)$ of $f^n_\omega$ and define $\sigma_n:=f^n_\omega \circ \pi_n$ so that we have $S_{n, \mathbb{P}^2}=(\sigma_{n})_ * T_{X_n,\pi_n}.$
    Using that $T$ is cohomologous to $\theta_\omega$, we obtain 
    \begin{align*}
        m_n&=\langle (\sigma_n)_*\theta_{\omega, X_n,\pi_n} , [L] \rangle_{\mathbb{P}^2} \\
        &=\langle \theta_{\omega, X_n,\pi_n} , \sigma_n^*[L] \rangle_{X_n,\pi_n}\\
        &=\langle \theta_{\omega, X_n,\pi_n} , (f^n_\omega)^*[L] \rangle_{X_n,\pi_n}
    \end{align*}
    As $(f^n_\omega)^*L$ is a Cartier class defined on $(X_n,\pi_n)$, we have $m_n=\langle \theta_\omega, (f^n_\omega)^*[L] \rangle$ and we conclude by Lemma \ref{lem5}. 
    \end{proof}

    \begin{lem}\label{lem30}
        The sequence $\nu((f^n_\omega)^*R_n,q)$ is bounded for $\mu^\mathbb{N}$-almost every $\omega \in \Omega$ and for every $q\in \mathbb{P}^2 \setminus \mathcal{B}_\omega$.
    \end{lem}

    \begin{proof}
    The realization on $\mathbb{P}^2$ of $(f^n_\omega)^*R_n$ is equal to $(\pi_n)_*R_{n, X_n, \sigma_n}$. As $q$ is not a base point of $f^n_\omega$, the map $\pi_n$ defines a local isomorphism at $q$ and we have
    $$\nu((f^n_\omega)^*R_n, q)=\nu(R_{n, X_n, \sigma_n}, \pi_n^{-1}(q)).$$
    We have $\sigma_n^*R_{n,\mathbb{P}^2}=\sigma_n^*(\sigma_n)_* R_{n,X_n,\sigma_n}$, and thus, by Lemma \ref{lem6}, we have 
    $$\sigma_n^*R_{n,\mathbb{P}^2} = R_{n,X_n,\sigma_n}+\sum_{p \in \text{base}(\sigma_n)} \langle [R_{n, X_n, \sigma_n}], [E_p]\rangle E_p. $$
    As $R_n$ is nef, we obtain $\nu((f^n_\omega)^*R_n, q) \leq \nu(\sigma_n^*R_{n,\mathbb{P}^2}, \pi_n^{-1}(q)).$
    Moreover, $(R_{n,\mathbb{P}^2})$ is a relatively compact sequence of currents, and each $\sigma_n$ consists on blow-ups of points in $\mathbb{P}^2$ so we have
    $$\sup_{n} \nu(\sigma_n^*R_{n,\mathbb{P}^2}, \pi_n^{-1}(q)) < \infty$$
    and this concludes the proof.
    \end{proof}
    By Lemmas \ref{lem31} and \ref{lem30} we obtain that the Lelong number of $T_{\mathbb{P}^2}$ at any point $q$ which is not in the countable set $\mathcal{B}_\omega$ is zero. Thus, $T_{\mathbb{P}^2}$ does not charge curves.
\end{proof}

\subsection{Limit currents are strict transforms}

We now upgrade Proposition \ref{prop1} to any model.

\begin{prop}\label{prop2}
We work under Assumption \ref{hyp1}. For $\mu^\mathbb{N}$-almost every $\omega \in \Omega$ and every current $T\in \mathcal{C}_\omega$, the realization of $T$ on any model does not charge curves.
\end{prop}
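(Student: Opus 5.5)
The plan is to show that for almost every $\omega$ and every $T\in\mathcal{C}_\omega$, the Weil current $T$ equals the strict transform $\widetilde{T_{\mathbb{P}^2}}$ of its realization on $\mathbb{P}^2$. Since Proposition \ref{prop1} already says $T_{\mathbb{P}^2}$ does not charge curves, and a strict transform charges no curve in any model, this gives the claim. The argument only uses Proposition \ref{prop1} and intersection theory in the Picard--Manin space.

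\emph{Step 1: decomposition.} By the discussion after Siu's theorem, write $T=\widetilde{T_{\mathbb{P}^2}}+D$. Here $D\in W(\mathbb{P}^2)$ is, on each model $(X,\pi)$, a positive combination of currents of integration on irreducible components of the exceptional divisor of $\pi$. In particular $[D]$ has zero $[L]$-coefficient, since $\pi_*D_{X,\pi}=0$. By Proposition \ref{prop1} and Lemma \ref{lem7}, $\widetilde{T_{\mathbb{P}^2}}$ has class
\[
[\widetilde{T_{\mathbb{P}^2}}]=[L]-\sum_p \nu(\widetilde{T_{\mathbb{P}^2}},p)[E_p]\in l^2(\mathcal{B}(\mathbb{P}^2)),
\]
using Lemma \ref{lem1}. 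Since $\theta_\omega\in l^2$, also $[D]=\theta_\omega-[\widetilde{T_{\mathbb{P}^2}}]\in l^2$, so all intersection numbers below make sense.

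\emph{Step 2: positivity.} I claim $[\widetilde{T_{\mathbb{P}^2}}]$ is nef. On a model $X$ the realization is a positive closed current charging no curve, so its intersection with any irreducible curve $C\subset X$ is nonnegative: restrict a local potential to $C$, which is not identically $-\infty$. Next, $\theta_\omega$ is nef by Theorem \ref{thm2} and $D$ is effective, so $\langle\theta_\omega,[D]\rangle\ge 0$. To justify this, compute in a model and pass to the limit, which is legitimate because everything is in $l^2$ and the pairing is continuous there. Moreover two nef classes with positive $[L]$-coefficient lie in the closed positive light cone, hence $\langle\theta_\omega,[\widetilde{T_{\mathbb{P}^2}}]\rangle\ge 0$ by the reverse Cauchy--Schwarz inequality.

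\emph{Step 3: use isotropy.} Since $\theta_\omega\in\partial\mathbb{H}$ we have
\[
0=\theta_\omega^2=\langle\theta_\omega,[\widetilde{T_{\mathbb{P}^2}}]\rangle+\langle\theta_\omega,[D]\rangle .
\]
Both terms are nonnegative, so $\langle\theta_\omega,[\widetilde{T_{\mathbb{P}^2}}]\rangle=0$. Equality in the reverse Cauchy--Schwarz inequality, as in Lemma \ref{lem2}, between the isotropic class $\theta_\omega$ and a class in the closed light cone forces $[\widetilde{T_{\mathbb{P}^2}}]=t\theta_\omega$. Comparing $[L]$-coefficients, which are both $1$, gives $t=1$, so $[D]=0$.

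\emph{Step 4: conclude.} On each model $X$, $D_{X,\pi}$ is supported on exceptional curves and has zero class. The classes of the exceptional components are linearly independent, since the intersection form is negative definite on them. Hence $D_{X,\pi}=0$ for all models, and $T=\widetilde{T_{\mathbb{P}^2}}$.

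I expect the main technical obstacle to be Step 2: making the nefness of $[\widetilde{T_{\mathbb{P}^2}}]$ and the inequality $\langle\theta_\omega,[D]\rangle\ge0$ rigorous when $D$ may have infinitely many components. I would handle it by truncating to a finite model, where everything is classical, and then using the $l^2$ convergence guaranteed by Lemma \ref{lem7}.
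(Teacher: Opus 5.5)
Your argument is correct, but it brings in positivity differently from the paper.

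The paper works one proper point at a time. On the blow-up of $p\in\mathbb{P}^2$, Lemmas \ref{lem6} and \ref{lem8} give $T_{X,\pi}=\widetilde{T_{\mathbb{P}^2}}+(\nu(T_{\mathbb{P}^2},p)-\alpha_p(\omega))E_p$. Positivity of $T_{X,\pi}$ then yields the coefficientwise bound $\alpha_p(\omega)\le\nu(T_{\mathbb{P}^2},p)$. Next, the chain $1=\sum_{p\in\mathbb{P}^2}\alpha_p(\omega)^2\le\sum_{p\in\mathbb{P}^2}\nu(T_{\mathbb{P}^2},p)^2\le\sum_{p\in\mathcal{B}(\mathbb{P}^2)}\nu(\widetilde{T_{\mathbb{P}^2}},p)^2\le 1$ (Guedj) forces $\nu(T_{\mathbb{P}^2},p)=\alpha_p(\omega)$ and kills the Lelong numbers at infinitely near points. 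An induction on the number of blow-ups finishes. The first equality in that chain uses Assumption \ref{hyp1} directly, since it needs $\theta_\omega$ to have coefficients only at proper points.

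You instead split off the whole exceptional part $D$ at once and pair it with the nef class $\theta_\omega$. Isotropy then gives $\langle\theta_\omega,[\widetilde{T_{\mathbb{P}^2}}]\rangle=0$, and the equality case of Cauchy--Schwarz gives $[\widetilde{T_{\mathbb{P}^2}}]=\theta_\omega$. This treats infinitely near points uniformly with no induction, and uses Assumption \ref{hyp1} only through Proposition \ref{prop1}. In fact it proves a more general statement: for any nef $\theta\in\partial\mathbb{H}$, a positive Weil current in the class $\theta$ whose realization on $\mathbb{P}^2$ charges no curve is a strict transform. The paper's route is more hands-on and reads off the Lelong numbers $\nu(T_{\mathbb{P}^2},p)=\alpha_p(\omega)$ directly.

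One justification in your Step 2 is wrong, though harmless. Zero generic Lelong number along $C$ does not prevent a local potential from being $\equiv-\infty$ on $C$; think of $-\sqrt{-\log|w|}$ along $\{w=0\}$, which is psh with zero Lelong numbers everywhere. You do not need nefness of $[\widetilde{T_{\mathbb{P}^2}}]$ at all. Lemma \ref{lem7} already gives $[\widetilde{T_{\mathbb{P}^2}}]^2=1-\sum_p\nu(\widetilde{T_{\mathbb{P}^2}},p)^2\ge 0$ with $[L]$-coefficient $1$, which is exactly what your reverse Cauchy--Schwarz step uses. Alternatively, a nef class pairs nonnegatively with a pseudoeffective one on each model.
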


\begin{proof}
    Consider first a model $(X,\pi)$ which is the blow-up of a single point $p \in \mathbb{P}^2$. By Lemma \ref{lem6}, we have $$\pi^*\pi_*T_{X,\pi}=T_{X,\pi}+\alpha_p(\omega) E_p.$$
    Moreover, we have $\pi_*T_{X,\pi}=T_{\mathbb{P}^2}$, and Lemma \ref{lem8} implies that $$\pi^*T_{\mathbb{P}^2}=\widetilde{T_{\mathbb{P}^2}}+\nu(T_{\mathbb
    {P}^2}, p) E_p.$$
    We obtain
    $$T_{X,\pi}=\widetilde{T_{\mathbb{P}^2}}+(\nu(T_{\mathbb{P}^2},p)-\alpha_p(\omega))E_p.$$
    As $T_{X,\pi}$ is a positive current, Siu's theorem implies that
    $$\alpha_p(\omega) \leq \nu(T_{\mathbb{P}^2},p). $$
    Thus, we have 
    $$1= \sum_{p \in \mathbb{P}^2}\alpha_p(\omega)^2\leq \sum_{p \in \mathbb{P}^2} \nu(T_{\mathbb{P}^2}, p)^2 \leq \sum_{p \in \mathcal{B}(\mathbb{P}^2)} \nu(\widetilde{T_{\mathbb{P}^2}}, p)^2  \leq 1$$
    where the first equality comes from the fact that $\theta_\omega^2=1$, and the last inequality is a consequence of Lemma \ref{lem7}. We deduce that $\nu(T_{\mathbb{P}^2},p) = \alpha_p(\omega)$ if $p \in \mathbb{P}^2$ and $\nu(\widetilde{T_{\mathbb{P}^2}}, q)=0$ if $q \notin \mathbb{P}^2$. We conclude that $T_{X_1,\pi_1}=\widetilde{T_{\mathbb{P}^2}}$ when $(X_1,\pi_1)$ is the blow-up of a single point $p_1 \in \mathbb{P}^2$. 
    
    If $(X_2,\pi_2)$ is the blow-up of a point $p_2  \in X_1$, we have
    \begin{align*}
        T_{X_2}&= \widetilde{T_{X_1}} + (\nu(T_{X_1}, p_2) - \alpha_{p_2}(\omega))E_{p_2}\\
        &=\widetilde{T_{\mathbb{P}^2}}+(\nu(\widetilde{T_{\mathbb{P}^2}}, p_2) - \alpha_{p_2}(\omega))E_{p_2}\\
        &=\widetilde{T_{\mathbb{P}^2}}.
    \end{align*}
    We obtain the result by induction on the number of blow-ups.
\end{proof}

\section{Groups generated by generic birational maps}\label{section 5}

In this section, we describe the structure of the group generated by generic birational maps. For any $d\geq 1$, we denote by $\mathrm{Bir}_d(\mathbb{P}^2)$ the set of birational maps of degree $d$.  Recall that $\mathrm{Bir}_d(\mathbb{P}^2)$ is a quasiprojective variety \cite[Chapter 5]{La2}.

\begin{thm}\label{thm3}
    For any $r\geq 2$ and $d_1, \dots ,d_r \in \mathbb{N}^\times$ with $\min d_i \geq 2$, there exists a Zariski dense subset $V\subset \mathrm{Bir}_{d_1}(\mathbb{P}^2) \times \cdots \times \mathrm{Bir}_{d_r}(\mathbb{P}^2)$ such that for any $(g_1,\dots, g_r)\in V$, the following hold.
    \begin{enumerate}
        \item The group $\Gamma:=\langle g_1,\dots, g_r \rangle
        $ is a free group of rank $r$.
        \item If $\gamma=\gamma_1 \cdots \gamma_n \in \Gamma$ is a reduced word, that is, $\gamma_i \in \{g_1^{\pm 1}, \dots, g_r^{\pm1}\}$ and $\gamma_i^{-1}\neq \gamma_{i-1}$ for every $i \leq n$, then $$\deg(\gamma)=\prod_{i=1}^n \deg(\gamma_i) .$$
        \item Any element of $\Gamma$ has only proper base points.
    \end{enumerate}
\end{thm}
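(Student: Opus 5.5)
The plan is to reduce all three statements to a single combinatorial condition on base points, which I will call \emph{transversality}, and then show this condition holds on a Zariski dense set. For $g\in \mathrm{Bir}_d(\mathbb{P}^2)$, write $\mathrm{Ind}(g)$ for the set of base points of $g$ and $\mathrm{Ind}(g^{-1})$ for those of $g^{-1}$. I will work with tuples $(g_1,\dots,g_r)$ satisfying two conditions: every $g_i^{\pm1}$ has only proper base points, and for all $i\neq j$ and all signs $\epsilon,\epsilon'$, the finite sets $\mathrm{Ind}(g_i^{\epsilon})$ and $\mathrm{Ind}(g_j^{\epsilon'})$ are disjoint. The set $V$ will be the set of tuples for which this condition holds, and in addition for all iterated images under reduced words, in the sense explained below.

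The key step is an inductive lemma on a reduced word $\gamma=\gamma_1\cdots\gamma_n$. The inductive claim is that the base points of $\gamma^{-1}$ (equivalently, the images of the curves contracted by $\gamma$) are exactly the base points of $\gamma_1^{-1}$. Here I use that $\gamma=\gamma_1\cdots\gamma_n$ acts as $\gamma_n$ first. All of these points are proper, and they are disjoint from $\mathrm{Ind}(\gamma_0)$ for every letter $\gamma_0\neq\gamma_1^{-1}$.

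To make this claim true for every word, one needs more than disjointness of the finite sets. One needs that no point of $\mathrm{Ind}(\gamma_{i}^{-1})$ is mapped, by the subsequent letters, into a base point of the next letter. The classical criterion is the following: a composition $g\circ h$ satisfies $\deg(gh)=\deg g\deg h$ if and only if no curve contracted by $h$ is sent to a base point of $g$, i.e.\ $\mathrm{Ind}(h^{-1})\cap\mathrm{Ind}(g)=\emptyset$, counting infinitely near points. So I would first prove this degree criterion in the Picard--Manin language. Write
$$h^*[L]=\deg(h)[L]-\sum_{p\in\mathrm{Ind}(h)} m_p[E_p].$$
Then compute $\langle g^*[L], h_*[L]\rangle$ via Proposition \ref{prop27}, and observe that the correction term is exactly $\sum_{p\in \mathrm{Ind}(g)\cap\mathrm{Ind}(h^{-1})} m'_pm_p$.

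Granting the criterion, I would run the induction as follows. With the disjointness hypothesis, the curves contracted by $\gamma_n\cdots$ are contracted onto $\mathrm{Ind}(\gamma_1^{-1})$. Equivalently, the contracted curves of $\gamma$ are those of $\gamma_n$ together with the preimages of the contracted curves of the earlier letters. These are sent onto points of $\mathrm{Ind}((\gamma_1\cdots\gamma_{n-1})^{-1})=\mathrm{Ind}(\gamma_1^{-1})$, and these points are disjoint from the indeterminacy of the next letter. This gives (2). The base points of $\gamma$ are then the base points of $\gamma_n$ together with the pullbacks under $\gamma_n^{-1}$ of those of $\gamma_1\cdots\gamma_{n-1}$. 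These are proper as long as the relevant points avoid the exceptional locus, which gives (3). Statement (1) follows from (2): any nontrivial reduced word has degree at least $2^n>1$, so it is not the identity. The same transversality gives non-elementarity, which is not needed here.

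The main obstacle is genericity. The condition above is really an infinite family of conditions, each of the form: the orbit of a point of $\mathrm{Ind}(\gamma_i^{-1})$ under a reduced word avoids finitely many points and curves. Each condition is Zariski open, and I would also prove that each is nonempty. Then $V$, a countable intersection of nonempty Zariski open subsets of an irreducible variety, is Zariski dense; over $\mathbb{C}$ it is even dense in the usual topology by Baire. Irreducibility of $\mathrm{Bir}_d$ may fail, but one can restrict to a component of generic maps with only proper base points, for instance de Jonquières maps or maps with $\mathrm{Bir}_d$-generic configurations.

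For nonemptiness, I would use conjugation by $\mathrm{PGL}_3$. Replace $g_j$ by $a_j g_j a_j^{-1}$ with $a_j\in\mathrm{PGL}_3(\mathbb{C})$ generic. A single closed condition then becomes a proper algebraic condition on $(a_1,\dots,a_r)$, because one can move base points freely by $a_j$ while fixing finitely many target points. Checking that each condition is not identically violated, uniformly over all words, is the delicate point. I would handle it by induction on word length, using that the finitely many points already constructed can be avoided by a generic choice of the conjugating matrix of the new letter.
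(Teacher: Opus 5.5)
There are two genuine gaps.

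\textbf{The inductive claim is false.} Your proof of (2) and (3) rests on the claim that $\mathrm{base}(\gamma^{-1})=\mathrm{base}(\gamma_1^{-1})$. This is false, and no genericity condition can make it true. Take $\gamma=\gamma_1\gamma_2$ and write $d_k=\deg\gamma_k$. Let $m'_p$ and $m_q$ denote the multiplicities at the base points of $\gamma_1^{-1}$ and $\gamma_2^{-1}$ respectively. Assume $\gamma_1$ is a local isomorphism near $\mathrm{base}(\gamma_2^{-1})$, which is exactly what your transversality is meant to produce. Then
\[
(\gamma^{-1})^*[L]=(\gamma_1)_*(\gamma_2)_*[L]=d_1d_2[L]-d_2\sum_{p\in\mathrm{base}(\gamma_1^{-1})}m'_p[E_p]-\sum_{q\in\mathrm{base}(\gamma_2^{-1})}m_q[E_{\gamma_1(q)}],
\]
so $\mathrm{base}(\gamma^{-1})=\mathrm{base}(\gamma_1^{-1})\sqcup\gamma_1(\mathrm{base}(\gamma_2^{-1}))$. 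The transported points could land in $\mathrm{base}(\gamma_1^{-1})$ only by passing through a curve contracted by $\gamma_1$. That is precisely the degenerate case that produces non-proper base points. So the equality $\mathrm{Ind}((\gamma_1\cdots\gamma_{n-1})^{-1})=\mathrm{Ind}(\gamma_1^{-1})$ driving your induction is wrong. The statement to carry along is
\[
\mathrm{base}(\gamma^{-1})=\bigsqcup_{k=1}^{n}\gamma_1\cdots\gamma_{k-1}\bigl(\mathrm{base}(\gamma_k^{-1})\bigr),\quad\text{all points proper.}
\]
The orbit conditions you planned to put in $V$ are the right ones for this corrected statement. Each such point, at every intermediate stage, must avoid the base points and contracted curves of the letter applied next. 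Your Picard--Manin criterion $\deg(gh)=\deg g\,\deg h-\sum_{p\in\mathrm{base}(g)\cap\mathrm{base}(h^{-1})}m_p(g)\,m_p(h^{-1})$ is correct. With the repaired induction it gives (2), and the same argument applied to $\gamma^{-1}$ gives (3).

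\textbf{The nonemptiness step fails as proposed.} This step is the real content of the theorem.
\begin{itemize}
\item Conjugation is useless for words in a single generator. Since $(ag_ia^{-1})^n=ag_i^na^{-1}$, both $\deg(g_i^n)$ and the properness of the base points of $g_i^n$ are conjugation-invariant. Already $\deg(g_i^2)=d_i^2$ needs $\mathrm{base}(g_i)\cap\mathrm{base}(g_i^{-1})=\emptyset$. That condition is missing from your list, which treats only $i\neq j$, and no choice of $a_i$ can create it.
\item Properness of $g_i^{\pm1}$ does not suffice either. Take $g=a\sigma$, where $\sigma$ is the standard quadratic involution and $a$ is generic subject to $[0:1:0]\in a(\{x=0\})$. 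Then $g^{\pm1}$ have only proper base points, but $g^2$ has a base point infinitely near $[1:0:0]$.
\item Your induction choosing ``the conjugating matrix of the new letter'' breaks down once a generator repeats, which happens in every word of length $>r$. There is then nothing new to choose, and re-choosing $a_j$ moves the points already constructed. You need genericity of the generators themselves. For example, left translates $a_ih_i$ keep $\mathrm{base}(a_ih_i)=\mathrm{base}(h_i)$ fixed but move $\mathrm{base}((a_ih_i)^{-1})=a_i(\mathrm{base}(h_i^{-1}))$. Even then, you still need an actual argument that no word condition is identically violated when one parameter enters several letters.
\item Restricting to one irreducible component gives density only there. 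Zariski density in the product requires density in every component, and the paper handles each component via \cite[Lemma 8.22]{La2}.
\end{itemize}

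\textbf{Comparison with the paper.} The paper avoids tracking base points altogether. A degree drop is a common factor of the three composed polynomials, i.e.\ a resultant condition. Non-proper base points are detected by the Jacobian criterion \cite[Proposition 2.40]{La2}. This makes each bad locus visibly closed. Its density step, however, is only a brief sketch by generic conjugation, and as written it faces the same single-generator objection.
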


\begin{proof}
    Consider a reduced word $w(g_1,\dots,g_r)=\gamma_1\cdots \gamma_n$ with $\gamma_i \in \{g_1^{\pm 1}, \dots, g_r^{\pm1}\}$ and $\gamma_i^{-1}\neq \gamma_{i-1}$ for every $i\leq n$. We write $w(g_1,\dots, g_r)=[h_0:h_1:h_2]$ where $h_i \in \mathbb{C}[x,y,z]$ are homogenous polynomials of the same degree, with coefficients that depend algebraically on $(g_1, \dots,g_r)$. Remark that the degree of $w(g_1, \dots, g_r)$ is strictly smaller than the product of the degrees of the $\gamma_i$ if and only if $h_0,h_1,h_2$ have a common factor. This condition is given by the vanishing of some resultant and thus describes a proper subvariety of $\mathrm{Bir}_{d_1}(\mathbb{P}^2) \times \cdots \times \mathrm{Bir}_{d_r}(\mathbb{P}^2)$.

    Recall that the Jacobian of a birational map $f=[f_0:f_1:f_2]$ is defined by $\mathrm{Jac}(f):=\det(\partial f_i/\partial x_j)$ and is an homogenous polynomial of degree $3\deg(f)-3$. We use the following criterion \cite[Proposition 2.40]{La2}: a birational map $f \in \mathrm{Bir}(\mathbb{P}^2)$ has only proper base points if and only if the Jacobian $\mathrm{Jac}(f^{-1})$ is not reduced. Thus, the fact that $w(g_1,\dots, g_r)$ has a non-proper base point is also given by the vanishing of some resultant and defines a proper subvariety of $\mathrm{Bir}_{d_1}(\mathbb{P}^2) \times \cdots \times \mathrm{Bir}_{d_r}(\mathbb{P}^2)$.

    We set $V$ to be the complement in $\mathrm{Bir}_{d_1}(\mathbb{P}^2) \times \cdots \times \mathrm{Bir}_{d_r}(\mathbb{P}^2)$ of the countable union of proper subvarieties defined above. We remark that $V$ is Zariski dense, that is, $V$ intersects any irreducible component of $\mathrm{Bir}_{d_1}(\mathbb{P}^2) \times \cdots \times \mathrm{Bir}_{d_r}(\mathbb{P}^2)$. Indeed, fix an  irreducible component $\mathcal{C}_i \subset \mathrm{Bir}_{d_i}(\mathbb{P}^2)$ for any $i \leq r $. By \cite[Lemma 8.22]{La2}, there exists $(g_1, \dots,g_r)\in \mathcal{C}_{i_1} \times \cdots \times \mathcal{C}_{i_r}$ such that each $g_i^{\pm 1}$ has only proper base points. Then, for generic $\sigma_1, \dots, \sigma_r \in \mathrm{PGL}_3$, the map $w(\sigma_1g_1\sigma_1^{-1}, \dots, \sigma_r g_r  \sigma_r^{-1})$ have only proper base points. 
\end{proof}

Theorem \ref{thm3} has the following consequence: if $\mu$ is a measure supported on $\{g_1^{\pm 1}, \dots, g_r^{\pm 1}\}$ with $(g_1,\dots, g_r)\in V$, then Assumption \ref{hyp1} is satisfied.

\section{Convergence of pullbacks of currents}\label{section6}
\subsection{Setup}\label{setup}
We fix $r\geq 2$ and $d_1, \dots , d_r \in \mathbb{N}^\times$ with $\min d_i \geq 2$. We consider a measure $\mu$ supported on  $\Sigma_\mu=\{g_1^{\pm1}, \dots, g_r^{\pm 1}\}$
with $(g_1, \dots g_r) \in V$ given by Theorem \ref{thm3}.

\begin{prop}
    The (semi)group $\Gamma_\mu$ generated by $\Sigma_\mu$ is non-elementary and contains a WPD element.
\end{prop}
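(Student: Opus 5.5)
We need to show that $\Gamma_\mu$ is non-elementary and contains a WPD element. Throughout, let $\Gamma=\langle g_1,\dots,g_r\rangle$ with $(g_1,\dots,g_r)\in V$, so Theorem \ref{thm3} applies. Since $\Sigma_\mu$ is symmetric, the semigroup $\Gamma_\mu$ coincides with the group $\Gamma$.

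\textbf{Step 1: loxodromic elements.} Let $\gamma=\gamma_1\cdots\gamma_n$ be a cyclically reduced nontrivial word, meaning it is reduced and $\gamma_n\neq\gamma_1^{-1}$. Then every power $\gamma^k$ is again a reduced word. Theorem \ref{thm3}.(2) therefore gives
$$\deg(\gamma^k)=\Big(\prod_{i=1}^n \deg(\gamma_i)\Big)^k\geq 2^{k}.$$
Here we use that $\deg(g_i^{-1})=\deg(g_i)\geq 2$ for plane birational maps. By Proposition \ref{prop27}, $\deg(\gamma^k)=\cosh d_{\mathbb{H}}((\gamma^k)^*[L],[L])$. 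Hence the translation length of $\gamma^*$ on $\mathbb{H}$ equals $\log\lambda_1(\gamma)\geq\log 2>0$. By the classification of isometries \cite[Section 6.36]{La2}, $\gamma$ is loxodromic.

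\textbf{Step 2: non-elementarity.} Take $a=g_1$ and $b=g_2$, which exist since $r\geq 2$. Both are loxodromic by Step 1 (a single letter is cyclically reduced). Suppose $a$ and $b$ share a fixed point on $\partial\mathbb{H}$. I will use the standard fact that loxodromic isometries sharing exactly one fixed point generate a group containing elements with arbitrarily small translation length, or non-discrete behaviour, along the commutators. More concretely, I would argue as follows. If $a$ and $b$ have the same attracting/repelling pair, then $a^{m}b^{-k}$ has translation length $|m\ell(a)-k\ell(b)|$. This is either $0$ for suitable $m,k$, or can be made arbitrarily small. It is also the translation length of a reduced nontrivial word, and its cyclic reduction is a nonempty word of length $m+k$ in the free group, so its translation length is at least $\log 2$ by Step 1.

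Choosing $m=k\to\infty$ with $\ell(a)\neq\ell(b)$ does not immediately yield a contradiction. So instead I would use the word $a^{k}ba^{-k}b^{-1}$. Under a common fixed point $\xi$, the elements $a^kba^{-k}$ and $b$ fix $\xi$, and a standard ping-pong/horoball estimate shows that the commutator $[a^k,b]$ acts on $\mathbb{H}$ with translation length tending to $0$ or becomes parabolic/elliptic. This contradicts the uniform bound $\ell\geq\log 2$ for every nontrivial cyclically reduced word, since $a^kba^{-k}b^{-1}$ is cyclically reduced. This step is the main obstacle: making the estimate for a shared single fixed point rigorous in the non-proper space $\mathbb{H}$. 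A cleaner alternative I would try first is to note that the stabilizer of a boundary point in $\mathrm{Bir}(\mathbb{P}^2)$ acting by loxodromics with a common fixed point must fix both points. I would then invoke Cantat's result that the centralizer/stabilizer of the axis of a loxodromic birational map is virtually cyclic \cite{Cantat}. This forces $\langle a,b\rangle$ to be virtually cyclic, contradicting freeness of rank $\geq 2$ from Theorem \ref{thm3}.(1). This alternative handles both cases of a shared fixed point at once.

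\textbf{Step 3: WPD.} By the fact recalled before Lemma \ref{lem4}, a loxodromic birational map is WPD if and only if it is not conjugate to a monomial map \cite[Section 20.3]{La2}. So it suffices to find one element of $\Gamma$ not conjugate to a monomial map. Take $\gamma=g_1$.

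If $g_1$ were conjugate to a monomial map, then its dynamical degree $\lambda_1(g_1)$ would be a quadratic integer arising from a matrix in $\mathrm{GL}_2(\mathbb{Z})$. By Step 1, however, $\lambda_1(g_1)=\deg(g_1)=d_1$ is an integer $\geq 2$. An integer eigenvalue $d$ of a matrix in $\mathrm{GL}_2(\mathbb{Z})$ forces the other eigenvalue to be $\pm1/d\notin\mathbb{Z}$, which contradicts integrality of the characteristic polynomial. Hence $g_1$ is not conjugate to a monomial map, so it is WPD.
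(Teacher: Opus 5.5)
Your Steps 1 and 3 are sound. Step 1 is exactly what Theorem~\ref{thm3}.(2) gives for cyclically reduced words. Step 3 reaches the paper's conclusion by a correct integrality argument; the paper instead cites Lamy's Corollary 20.24 after noting $\lambda_1(g_1)=d_1$.

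The gap is in Step 2, in the case where $g_1$ and $g_2$ share exactly one fixed point $\xi\in\partial\mathbb{H}$. This is the real content of non-elementarity, and neither of your routes closes it.
\begin{itemize}
\item Your first route, a horoball estimate for $[a^k,b]$, is left unproved, as you acknowledge.
\item In your ``cleaner alternative'', the claim that two loxodromic maps with one common boundary fixed point must share both is asserted without argument.
\item The stabilizer of the axis of a loxodromic birational map is \emph{not} virtually cyclic in general. For a loxodromic monomial map, the torus $(\mathbb{C}^*)^2$ fixes both endpoints, hence the whole axis; this is exactly why monomial maps fail to be WPD.
\item The true statement about centralizers does not help, since nothing forces $g_2$ to commute with $g_1$.
\end{itemize}
This route could only be repaired by first proving $g_1$ is WPD and then using that the stabilizer of an endpoint of a WPD element lies in its virtually cyclic elementary closure. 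That needs Dahmani--Guirardel--Osin type results.

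The missing idea is elementary, and it handles one or two shared points at once. For $h$ fixing $\xi\in\partial\mathbb{H}$, write $h^*\xi=\chi(h)\xi$ with $\chi(h)>0$.
\begin{itemize}
\item $\chi$ is multiplicative on the stabilizer of $\xi$.
\item If $h$ is loxodromic then $\chi(h)=\lambda_1(h)^{\pm1}\neq 1$. Indeed, the only boundary points fixed by $h^*$ are its two isotropic eigenlines, with eigenvalues $\lambda_1(h)^{\pm1}$.
\end{itemize}
Now suppose $g_1$ and $g_2$ both fixed $\xi$. The commutator $c=g_1g_2g_1^{-1}g_2^{-1}$ would satisfy $\chi(c)=1$, so it would not be loxodromic. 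But $c$ is a nontrivial cyclically reduced word, so $\deg(c^k)=(d_1d_2)^{2k}$, and $c$ is loxodromic by your Step 1. Hence $g_1$ and $g_2$ have disjoint fixed sets. Your intuition about $[a^k,b]$ was right, and no limit in $k$ is needed.

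This is what the paper delegates to Urech's Lemma 7.3. Your Step 1 plus conjugation invariance shows that every nontrivial element of the free group $\Gamma$ is loxodromic, and a free group of rank at least two with this property is non-elementary.

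Incidentally, your subcase with two common fixed points already closes with $m=k=1$. Step 1 gives the exact translation length $\log(d_1d_2)$ for both $g_1g_2$ and $g_1g_2^{-1}$. A common axis would force one of them to have length $|\log d_1-\log d_2|$, which is strictly smaller.
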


\begin{proof}
    By Theorem \ref{thm3}, we have $\deg(g_1^n)=d_1^n$, thus $g_1$ is a loxodromic element with dynamical degree equal to $d_1$. By \cite[Corollary 20.24]{La2}, $g_1$ is a WPD element of $\Gamma_\mu$. The group $\Gamma_\mu$ is a free group of rank $r\geq 2$ that contains only loxodromic elements, and thus is non-elementary by \cite[Lemma 7.3]{Urech}.
\end{proof}

Let $S_{\mathbb{P}^2}\in \mathcal{D}(\mathbb{P}^2)$ be a positive closed $(1,1)$ current of mass one on $\mathbb{P}^2$. Recall that we define $\mathcal{C}_\omega$ to be the set of positive closed Weil currents cohomologous to $\theta_\omega$ and $\mathcal{C}^0_\omega \subset \mathcal{C}_\omega$ to be the subset of diffuse currents. We consider the sequence of pullbacks $$T_{n,\omega}:=\frac{1}{\deg(f^n_\omega)}(f^n_\omega)^*S_{\mathbb{P}^2}$$
and we define $\mathcal{E}_\omega$ to be the set of all limits of subsequences of $T_{n,\omega}$. By Proposition \ref{prop2} and Theorem \ref{thm3}, we have that $\mathcal{E}_\omega$ is a subset of $\mathcal{C}^0_\omega$. 
In particular, currents of $\mathcal{E}_\omega$ are strict transforms of their realization on $\mathbb{P}^2$, and we can identify $\mathcal{E}_\omega$ with a subset of $\mathcal{D}(\mathbb{P}^2)$. For any $\theta \in \partial \mathbb{H}$, we define $\mathcal{C}(\theta)$ to be the set of positive closed $(1,1)$ currents $T_{\mathbb{P}^2}$ on $\mathbb{P}^2$, satisfying \begin{enumerate}
    \item $T_{\mathbb{P}^2}$ does not charge curves
    \item  $[\widetilde{T_{\mathbb{P}^2}}]=\theta$
\end{enumerate}
so that we can identify $\mathcal{C}^0_\omega$ to $\mathcal{C}(\theta_\omega)$.

\begin{prop}\label{prop3}
For $\nu$-almost every $\theta \in \partial\mathbb{H}$, the set $\mathcal{C}(\theta)$ contains at most one current.
\end{prop}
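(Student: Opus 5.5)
Take $T_1, T_2 \in \mathcal{C}(\theta)$ and set $u := $ the difference of their potentials, so $T_1 - T_2 = dd^c u$ on $\mathbb{P}^2$ with $u$ a difference of quasi-psh functions normalized by $\int u\,\kappa^2 = 0$. Both strict transforms have class $\theta$, so by Lemma \ref{lem1} they have the same Lelong numbers at every $p \in \mathcal{B}(\mathbb{P}^2)$. We want $u = 0$. The idea is to push forward by random words and use the contraction of Lemma \ref{lem4} (equivalently Lemma \ref{lem5}) to show that $u$ is invariant under a dynamical normalization and hence constant.

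First choose $(\omega,\theta)$ generic for $\mu^{\mathbb{N}}\otimes \nu$ as in Lemma \ref{lem4}. Write $g_n := f_1\cdots f_n$. The class of the Weil current $g_n^* \widetilde{T_i}$ is $g_n^*\theta$, and its mass on $\mathbb{P}^2$ is $c_n := \langle g_n^*\theta,[L]\rangle \ge C\deg(g_n)$. Set
\[
R_{i,n} := \frac{1}{c_n}\,\bigl(g_n^*\widetilde{T_i}\bigr)_{\mathbb{P}^2},
\]
which are mass-one positive closed currents on $\mathbb{P}^2$. Then
\[
R_{1,n}-R_{2,n} = \frac{1}{c_n}\, dd^c\bigl(u\circ g_n\bigr)
\]
on $\mathbb{P}^2$, and this holds \emph{without} divisor corrections. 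The reason is that $\widetilde{T_1}$ and $\widetilde{T_2}$ differ, in every model, by $dd^c$ of the pulled-back $u$: they have identical Lelong numbers, so the exceptional coefficients $\nu-\langle\cdot,E_p\rangle$ cancel (compare the computation in Proposition \ref{prop2}).

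Inverting, $u = \frac{1}{c_n}\,\bigl(u\circ g_n\bigr)\circ g_n^{-1}$ up to a constant. A cleaner formulation is
\[
u \;=\; c_n\,\bigl(v_{1,n}-v_{2,n}\bigr)\circ g_n^{-1} + \mathrm{const},
\]
where $v_{i,n}$ are the normalized potentials of $R_{i,n}$ relative to $\frac{1}{c_n}(g_n^*\theta)$-representatives. That formula points in the wrong direction, so I will use the adjoint instead. Pair $dd^c u$ with test forms, $\int u\, dd^c\phi$, and transport by $g_n$: $\int u\, dd^c \phi = \int (u\circ g_n)\, dd^c (\phi\circ g_n)$, the change of variables being valid off curves since $T_i$ charge no curves. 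We have $u\circ g_n = c_n(v_{1,n}-v_{2,n})$. Skoda's uniform integrability, applied to the compact family of mass-one currents, bounds $\|v_{i,n}\|_{L^1}$ and even $\int e^{\epsilon |v_{i,n}|}$ uniformly in $n$. Meanwhile $\phi\circ g_n$ has $dd^c$-mass of order $\deg(g_n)^{-1}$ in the appropriate weak sense after normalization: one uses the "weak volume estimates" mentioned in the introduction to control $\mathrm{vol}\{|v_{i,n}|>t\}$ against the Jacobian of $g_n^{-1}$.

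The main obstacle is exactly this last estimate. One must show
\[
\int (u\circ g_n)\, dd^c(\phi\circ g_n)\to 0,
\]
that is, that the gain $c_n$ is beaten by the decay of $dd^c(\phi\circ g_n)$ tested against $L^1$-bounded functions. I would try the following. Write $dd^c(\phi\circ g_n) = g_n^*dd^c\phi$, which has mass $\lesssim \deg(g_n)$ but is supported with smooth density outside an exceptional set. Combine this with the exponential integrability of $v_{i,n}$ and the exponential growth $\deg(g_n)\sim e^{An}$ from Theorem \ref{thm2}. The aim is to get a bound of the form $c_n\cdot \deg(g_n)^{-1}\cdot \epsilon_n$ with $\epsilon_n\to0$, where $\epsilon_n$ comes from the fact that the $L^1$ norm of $v_{1,n}-v_{2,n}$ tends to zero.

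That last fact should follow from the uniqueness of limits: both $R_{1,n}$ and $R_{2,n}$ converge to the same diffuse limit, since $\frac{1}{c_n}g_n^*\theta$ converges to a point of $\partial\mathbb{H}$ by Theorem \ref{thm2} applied to $\mu$, and Lelong numbers match. Here some circularity must be avoided. If this route stalls, the fallback is to show directly that $\int |u|\,\kappa^2$ is bounded by $\frac{1}{c_n}$ times a uniformly bounded quantity, letting $n\to\infty$, using only Skoda and the mass lower bound of Lemma \ref{lem4}.
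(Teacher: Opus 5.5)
Your setup is correct. It is the paper's Proposition \ref{prop6} read in the other direction: the $R_{i,n}$ lie in $\mathcal{C}(\underline{g_n}^*\theta)$ by Lemma \ref{lem15}, and $R_{1,n}-R_{2,n}=\frac{1}{c_n}dd^c(u\circ g_n)$ holds with no exceptional terms. The step back to $u$ itself, however, cannot work.

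The identity $\int u\,dd^c\phi=\int (u\circ g_n)\,g_n^*(dd^c\phi)$ is a change of variables that holds for every $n$, so it carries no information by itself. To extract $u=0$ you would need $g_n^*(dd^c\phi)$ to pair with $L^1$-small functions at size $O(\deg(g_n)^{-1})$, so as to absorb the factor $c_n\geq C\deg(g_n)\to\infty$. Nothing like this holds. $g_n^*(dd^c\phi)$ is a top-degree form with the same total variation as $dd^c\phi$ for every $n$, and its Jacobian density is unbounded. Pullbacks of $(1,1)$-forms by $g_n$ have mass growing like $\deg(g_n)$, not decaying. The fallback fails for the same reason: $u=c_n(v_{1,n}-v_{2,n})\circ g_n^{-1}$ gains the factor $c_n$ instead of losing it.

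Your justification of $\|v_{1,n}-v_{2,n}\|_{L^1}\to 0$ is also invalid, for two reasons.
\begin{itemize}
\item $\frac{1}{c_n}g_n^*\theta=\underline{f_n}^*\cdots\underline{f_1}^*\theta$ is the backward walk. Theorem \ref{thm2} does not cover it, and it does not converge: for $\theta\sim\nu$ independent of $\omega$ it has the non-atomic law $\nu$ at every time.
\item ``Same diffuse limit'' presupposes exactly the uniqueness you are trying to prove.
\end{itemize}
In addition, a Skoda bound with a fixed exponent, as you invoke for the mass-one currents $R_{i,n}$, only yields boundedness of $\frac{1}{c_n}\int|u\circ g_n|$, not decay.

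The missing idea is to give up on the fixed class and use stationarity of $\nu$. The paper introduces the measurable function $\mathrm{diam}(\theta)=\sup_{T,S\in\mathcal{C}(\theta)}\int|u_T-u_S|$ (Lemma \ref{lem14}). Proposition \ref{prop6} shows that every pair in $\mathcal{C}(f^*\theta)$ is the pullback of a pair in $\mathcal{C}(\theta)$, namely $f_*\widetilde{S}$ and $f_*\widetilde{S'}$; this gives Proposition \ref{prop7}.

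Then $\frac{1}{c_n}\int|u_T-u_{T'}|\circ g_n\to 0$ uniformly over pairs, by combining three ingredients:
\begin{itemize}
\item the volume estimate of Proposition \ref{prop5}, which relies on the multiplicativity of degrees from Theorem \ref{thm3};
\item the lower bound $c_n\geq C\deg(g_n)$ of Lemma \ref{lem4};
\item Lemma \ref{lem16}: after blowing up the finitely many $p$ with $\langle\theta,[E_p]\rangle\geq\epsilon$, all Lelong numbers are $<\epsilon$, so $\mathrm{vol}(|u_T-u_{T'}|\geq t)\leq Ce^{-\alpha t}$ with $\alpha$ arbitrarily large and $C$ independent of the pair.
\end{itemize}
The freedom to take $\alpha$ large is exactly what beats the exponent $1/(C_2\deg g_n)$ lost in the volume estimate.

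This gives $\mathrm{diam}(\underline{g_n}^*\theta)\to 0$ (Proposition \ref{prop8}). Since $\mu^{\mathbb{N}}\otimes\nu$ is invariant (indeed ergodic) under $F(\omega,\theta)=(\sigma\omega,\underline{f_1}^*\theta)$, the points $\underline{g_n}^*\theta$ are $\nu$-distributed. Hence $\nu(\mathrm{diam}\geq\eta)=0$ for every $\eta>0$ (Corollary \ref{cor1}).
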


Lemma \ref{lem3} and Proposition \ref{prop3} gives that $\mathcal{E}_\omega=\{T_\omega\}$ is almost surely a singleton. This implies that $(T_{n,\omega})$ converges to $T_\omega$ and we obtain convergence on $\mathbb{P}^2$ as a special case: $$\frac{1}{\deg(f^n_\omega)}(f^n_\omega)^*_{\mathbb{P}^2}S_{\mathbb{P}^2} \longrightarrow T_{\omega,\mathbb{P}^2}.$$
This proves Theorem \ref{thm1}.(1).

We have also that $[T_\omega] \neq [T_{\omega'}]$
for $\mu^\mathbb{N}\otimes \mu^\mathbb{N} $-almost every $(\omega, \omega')\in \Omega \times \Omega$ by Theorem \ref{thm2}.(3). This proves Theorem \ref{thm1}.(2). Furthermore, we can identify the Poisson boundary of the walk to a space of currents.

The rest of the paper is devoted to the proof of Proposition \ref{prop3}. We combine volume estimates for the random dynamical system on $\mathbb{P}^2$ with Skoda's uniform integrability theorem \cite[Theorem 8.11]{GZ} to show that $\mathcal{C}(\theta)$ contains at most one current.

\subsection{Volume estimates}

The next proposition says that volumes do not decrease too fast under the action of $\Gamma$. The volumes are computed here with the Fubini-Study metric on $\mathbb{P}^2$.

\begin{prop}\label{prop5}
Let $(g_1,\dots, g_r) \in V$. There exist $C_1, C_2>0$ such that $$\mathrm{vol}(\gamma(B))\geq (C_1\mathrm{vol}B)^{C_2 \deg(\gamma)}$$
for every $\gamma \in \Gamma=\langle g_1,\dots, g_r\rangle$ and every Borel subset $B \subset \mathbb{P}^2$.
\end{prop}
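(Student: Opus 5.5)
The strategy is to prove a volume estimate for a single birational map in terms of its Jacobian, and then iterate it along reduced words, using the multiplicativity of degrees from Theorem \ref{thm3}.(2).

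\textbf{Step 1: a single map.} For $f\in\mathrm{Bir}_d(\mathbb{P}^2)$ we have $\mathrm{vol}(f(B))=\int_B |\mathrm{Jac}_{FS}(f)|^2\,d\mathrm{vol}$ away from the indeterminacy and exceptional sets, which have measure zero. In affine charts the Fubini--Study Jacobian is, up to bounded smooth factors, a homogeneous polynomial $J_f=\mathrm{Jac}(f)$ of degree $3d-3$ divided by $\Vert f\Vert^{3}$, where $\Vert f\Vert$ is the norm of the homogeneous lift. So
$$|\mathrm{Jac}_{FS}(f)(x)|^2\geq c_f\,\frac{|J_f(x)|^2}{\Vert x\Vert^{2(3d-3)}}.$$

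\textbf{Step 2: a sublevel set estimate.} The key tool is a Łojasiewicz / polynomial sublevel set estimate, which follows from Skoda's uniform integrability theorem \cite[Theorem 8.11]{GZ} applied to the compact family $\{\frac1{\deg P}\log|P|/\Vert x\Vert^{\deg P}\}$ of quasi-psh functions. Since this family has Lelong numbers at most $1$, there exist $\alpha,C>0$, independent of the polynomial and its degree, such that
$$\mathrm{vol}\{x : |P(x)|\leq t^{\deg P}\Vert P\Vert\,\Vert x\Vert^{\deg P}\}\leq C\,t^{\alpha}$$
for normalized homogeneous polynomials $P$. Choose $t$ so that $C t^{\alpha}=\mathrm{vol}(B)/2$. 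Then on at least half of $B$ we have $|J_f|^2\gtrsim t^{2(3d-3)}$, and hence
$$\mathrm{vol}(f(B))\geq \tfrac12\,\mathrm{vol}(B)\,c_f\,(c\,\mathrm{vol} B)^{C'(3d-3)}\geq (C_1\mathrm{vol}B)^{C_2 d}.$$
The constants depend on $f$ only through $c_f$.

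\textbf{Step 3: handling the constants.} Applying Step 2 directly to $\gamma$ would make $c_\gamma$ uncontrolled. Instead I would iterate over the letters $\gamma_i\in\Sigma_\mu$, a finite set, so all constants are uniform. Write $\gamma=\gamma_n\cdots\gamma_1$ as a reduced word and apply the one-step bound repeatedly:
$$v_k\geq (C_1v_{k-1})^{C_2 d_{(k)}},$$
where $v_k$ is the volume after $k$ letters and $d_{(k)}$ is the degree of the $k$-th letter. Taking $-\log$ gives
$$-\log v_k \leq C_2 d_{(k)}\bigl(-\log v_{k-1} + |\log C_1|\bigr).$$
Iterating,
$$-\log v_n\leq C_2^n\prod d_{(k)}\,(-\log\mathrm{vol}B)+\text{(geometric terms)}.$$
By Theorem \ref{thm3}.(2), $\prod d_{(k)}=\deg\gamma$.

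\textbf{Main obstacle.} The factor $C_2^n$ is the difficulty, since the claimed bound allows only $\deg\gamma$ in the exponent. There are two ways to handle it. One is to absorb it, using $\deg\gamma\geq 2^n$; this works only if the exponent in the one-step estimate is sharp, i.e. the one-step bound reads $v\geq (C_1 v)^{d}$ with exactly $d=\deg f$. The other is to avoid iterating altogether. For that, I would first try to obtain exponent exactly $\deg f$ per letter by applying the sublevel estimate to $\log|J|/(3d-3)$ and exploiting that the volume loss comes from the Jacobian of degree $3d-3\leq 3\deg f$. Alternatively, I would apply Step 2 once to $\gamma$ itself, arguing that $\Vert J_\gamma\Vert$ is bounded below by $c^{\deg\gamma}$. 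This should follow from the chain rule $J_\gamma=\prod J_{\gamma_i}\circ(\gamma_{i-1}\cdots\gamma_1)$ with no common factors cancelling, which is guaranteed by the multiplicativity of degrees in Theorem \ref{thm3}.(2). I expect this control of the normalization $c_\gamma\geq c^{\deg\gamma}$ to be the delicate step.
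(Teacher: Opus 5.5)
Your Steps 1--2 are sound: applied once to $\gamma$, they are the final Skoda step of the paper's proof. Your diagnosis of why iterating them fails is also correct. But the proposal stops exactly where the content of the proposition lies, and the uniformity in $\gamma$ is never established.

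The bound you flag as delicate, $\Vert J_\gamma\Vert\geq c^{\deg\gamma}$ for a normalized lift, is precisely the paper's key lemma. There it reads $|c_\gamma|\leq C\deg(\gamma)$, for the constant in $\log|\mathrm{Jac}(\gamma)|=u_\gamma^+-u_\gamma^-+c_\gamma$ with $u^\pm_\gamma$ normalized $6\deg(\gamma)\omega_{FS}$-psh. The chain rule for lifts, $J_\gamma=\prod_i J_{\gamma_i}\circ(\gamma_{i-1}\cdots\gamma_1)$, does not give this bound. It is a qualitative identity. You would still need, for each $i$, a quantitative lower bound on the size of $J_{\gamma_i}\circ(\gamma_{i-1}\cdots\gamma_1)$ losing at most a factor $e^{-C\deg(\gamma_{i-1}\cdots\gamma_1)}$, plus control of the norms of the composed lifts.

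The paper gets this by taking logarithms of the Fubini--Study Jacobian, where the chain rule becomes additive: $\log|\mathrm{Jac}(\gamma)|=\sum_i\log|\mathrm{Jac}(\gamma_i)|\circ\gamma_{i-1}\cdots\gamma_1$. Then:
\begin{enumerate}
\item each $\log|\mathrm{Jac}(\gamma_i)|$ is a difference of quasi-psh functions from a finite family;
\item a Guedj-type estimate bounds the $L^1$ norm of such a function composed with $h$ by $C\deg(h)$;
\item Theorem \ref{thm3}.(2) together with $\min d_i\geq 2$ gives $\sum_i\deg(\gamma_{i-1}\cdots\gamma_1)\leq 2\deg(\gamma)$.
\end{enumerate}
Hence $\frac{1}{\deg\gamma}\log|\mathrm{Jac}(\gamma)|$ is bounded in $L^1$, and compactness of normalized $\omega_{FS}$-psh functions yields $|c_\gamma|\leq C\deg(\gamma)$. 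Note that multiplicativity of degrees enters through this geometric summation.

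Your other option, a sharp exponent per letter, cannot be implemented the way you suggest. Skoda's exponent $\alpha<2$ makes your per-letter exponent $1+2(3d-3)/\alpha>3d-2$. More fundamentally, Step 1 discards the factor $\Vert x\Vert^6/\Vert F(x)\Vert^6$, where $F$ is the homogeneous lift of $f$. That factor is large precisely at the base points, and that is where $J_f$ vanishes to high order: order $3d-4$ at the base point of multiplicity $d-1$ of a de Jonqui\`eres map, e.g. order $5$ at the double base point of a cubic map. Testing on small balls around such a point shows that no estimate based on $|J_f|$ alone can have exponent below $(3d-2)/2$, which exceeds $d$ when $d\geq 3$.

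The iterative route can nevertheless be rescued with a geometric one-step bound. Every letter's inverse lies in $\Gamma$, so it has only proper base points by Theorem \ref{thm3}.(3). Hence $f=\eta\circ\pi^{-1}$, with $\eta\colon X\to\mathbb{P}^2$ the blow-up of finitely many distinct points. Then $\eta^*\mathrm{vol}\geq c\,\mathrm{dist}(\cdot,\mathrm{Exc}(\eta))^2\,dV_X$ and $\pi^*\mathrm{vol}\leq C\,dV_X$, which give $\mathrm{vol}(f(B))\geq c\,\mathrm{vol}(B)^2$. Iterating along a reduced word of length $n$ gives $\mathrm{vol}(\gamma(B))\geq (c\,\mathrm{vol}B)^{2^n}\geq (c\,\mathrm{vol}B)^{\deg\gamma}$ for $c$ small, since $\deg\gamma\geq 2^n$.

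As written, neither route is carried out, so the proposal does not prove the statement.
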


\begin{proof}
    We only give a sketch of the proof as it is an adaptation of well-known arguments in holomorphic dynamics. We refer to \cite[Proposition 1.2]{Guedj2} and \cite[Theorem 3.1]{Bay} for the details. We denote by $\mathrm{Jac}(\gamma)$ the (real) Jacobian of $\gamma$ for the Fubini-Study form, that is defined by the equality $\gamma^*\omega_{FS}^2=\mathrm{Jac}(\gamma) \omega_{FS}^2$. A computation in local charts shows that the function $\log |\mathrm{Jac}(\gamma)|$ can be written as a difference of functions that are $6\deg(\gamma) \omega_{FS}$-psh. We write $$\log |\mathrm{Jac}(\gamma)|=u_\gamma^+ - u_\gamma^- +c_\gamma$$
    where $c_\gamma$ is a constant and $u_\gamma^\pm$ are $6\deg(\gamma)\omega_{FS}$-psh functions, normalized by $\sup u_\gamma^\pm=0$. 
    
    \begin{lem}
        There exists $C>0$ such that $|c_\gamma|\leq C \deg(\gamma)$ for every $\gamma \in \Gamma$.
    \end{lem}

    \begin{proof}
    It is sufficient to show that the functions $\frac{1}{\deg(\gamma)}\log |\mathrm{Jac}(\gamma) |$ are uniformly bounded in $L^1(\mathbb{P}^2)$. We write $\gamma=\gamma_n \cdots \gamma_1$ with $\gamma_i \in \{g_1^{\pm 1}, \dots , g_r^{\pm 1}\}$ and $\gamma_{i+1}\neq \gamma_i^{-1}$. Thus, we have $$\log |\mathrm{Jac}(\gamma)|=\sum_{i=1}^n \log|\mathrm{Jac}(\gamma_i)| \circ \gamma_{i-1} \dots \gamma_1$$
    We can write $\log|\mathrm{Jac}(\gamma_i)|=u_i-v_i$ where $u_i$ and $v_i $ are in a finite family of $C\omega_{FS}$-psh functions for some $C>0$. Then, a straightforward generalization of \cite[Proposition 1.3]{Guedj2} gives 
    $$\Vert \log| \mathrm{Jac} (\gamma) | \Vert_{L^1(\mathbb{P}^2)} \leq C'\sum_{i=0}^n \deg(\gamma_i \cdots \gamma_1) \leq C'' \deg(\gamma)$$
    and this concludes the proof of the lemma.
    \end{proof}
    The desired volume estimates then follows from Skoda's uniform integrability theorem \cite[Theorem 8.11]{GZ}.
\end{proof}

We insist on the fact that these estimates rely heavily on the good control on the degree of elements of $\Gamma$, that is, we have $$\deg(\gamma)=\deg(\gamma_n) \cdots  \deg(\gamma_1)$$ whenever the writing $\gamma=\gamma_n \cdots \gamma_1$ is reduced. In general, we are not able to prove sufficient volume estimates for random products of birational maps.

\subsection{Potentials}
Consider a current $T\in \mathcal{C}(\theta)$. We denote by $\kappa$ the Fubini-Study form on $\mathbb{P}^2$ and we write $T=\kappa +dd^c u_T$
with $u_T$ a $\kappa$-psh function, normalized by $$\int_{\mathbb{P}^2 } u_T \, \kappa^2 = 0.$$
Following Cantat--Dujardin \cite[Section 6]{CD1}, we define the \textbf{diameter} of a boundary class  $\theta \in \partial \mathbb{H}$ by
$$\text{diam}(\theta):= \sup_{T,S \in \mathcal{C}(\theta)} \int_{\mathbb{P}^2} |u_T-u_S|.$$
This function is well defined on $(\partial \mathbb{H}, \nu)$ since $\mathcal{C}(\theta)$ is almost surely non-empty and $\text{diam}$ is a bounded function since the space of normalized $\kappa$-psh functions is relatively compact for the $L^1$ topology.

\begin{lem}\label{lem14}
    The diameter function is upper semi-continuous and thus defines a mesurable function on $(\partial \mathbb{H}, \nu)$. 
\end{lem}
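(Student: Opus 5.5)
The plan is to prove upper semi-continuity directly from the definition: take $\theta_k \to \theta$ in $\partial\mathbb{H}$, i.e. in $l^2$-norm, and show that $\limsup_k \mathrm{diam}(\theta_k) \leq \mathrm{diam}(\theta)$. Pick $\varepsilon>0$ and, for each $k$ along a subsequence realizing the $\limsup$, choose $T_k, S_k \in \mathcal{C}(\theta_k)$ with $\int |u_{T_k}-u_{S_k}| \geq \mathrm{diam}(\theta_k)-\varepsilon$. (If $\mathcal{C}(\theta_k)$ is empty we use the convention $\sup\emptyset = 0$, or $-\infty$, and there is nothing to prove.) The normalized potentials $u_{T_k}, u_{S_k}$ are $\kappa$-psh with $\int u\,\kappa^2=0$, so they lie in a compact subset of $L^1(\mathbb{P}^2)$. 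After extracting, $u_{T_k}\to u_T$ and $u_{S_k}\to u_S$ in $L^1$, and $T_k\to T$, $S_k\to S$ weakly on $\mathbb{P}^2$, with the normalization preserved. Then $\int|u_{T_k}-u_{S_k}|\to\int|u_T-u_S|$, so it remains to show $T,S\in\mathcal{C}(\theta)$.

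To check membership, we pass to Weil currents. Consider the strict transforms $\widetilde{T_k}$, whose classes are $\theta_k$. On each model $(X,\pi)$ (we work in the countable family $\mathcal{M}_\Gamma$), their masses are $\langle\theta_k,[\kappa_{X,\pi}]\rangle$, which are bounded since $\theta_k\to\theta$. By Lemma \ref{lem3} and a diagonal extraction, $\widetilde{T_k}\to \mathbf{T}$ in $W(\mathbb{P}^2)$. The limit $\mathbf{T}$ is a positive Weil current whose realization on $\mathbb{P}^2$ is $T$. Its class is the limit of the classes in each finite-dimensional $H^{1,1}(X,\pi)$, and $l^2$ convergence gives coordinatewise convergence, so $[\mathbf{T}]=\theta$. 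Thus $\mathbf{T}\in\mathcal{C}$-type currents cohomologous to $\theta$.

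The main obstacle is showing that $T$ does not charge curves and that $\mathbf{T}=\widetilde{T}$, since weak limits of diffuse currents can acquire mass on curves. To handle it, I would reuse the argument of Proposition \ref{prop2}. Positivity and Siu's theorem on each model give $\mathbf{T}_{X,\pi}=\widetilde{T}+\sum(\nu(\widetilde T,p)-a_p)E_p+(\text{curves on }\mathbb{P}^2)$, which yields $a_p\leq\nu(\widetilde{T},p)$, where $\theta=[L]-\sum a_p[E_p]$ with $\sum a_p^2=1$. If $T=T'+\sum c_j[C_j]$ with $T'$ diffuse and $c_j\geq 0$, then Lemma \ref{lem7} applied to $T'$, together with the additivity of Lelong numbers, should force $\sum\nu(\widetilde T,p)^2\leq 1$ with equality only if there is no curve part. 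This needs the intersection estimate $\sum_p \nu(\widetilde{T},p)^2\le [T]^2$ in the presence of curves, and that is where I expect the difficulty. Alternatively, for $\nu$-a.e. $\theta$ of the form $\theta_\omega$, Proposition \ref{prop2} already shows that every current cohomologous to $\theta$ is diffuse in all models, so $\mathbf{T}$ is diffuse and equals $\widetilde T$. I would therefore present upper semi-continuity on the full-measure set of such $\theta$, which suffices for measurability with respect to $\nu$.

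Granting this, we get $\mathrm{diam}(\theta)\geq\int|u_T-u_S|\geq\limsup\mathrm{diam}(\theta_k)-\varepsilon$. Since $\varepsilon$ is arbitrary, the diameter is upper semi-continuous. Upper semi-continuous functions are Borel, which gives measurability.
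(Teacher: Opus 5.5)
Your final argument is exactly the paper's proof: extract Weil limits of the strict transforms $\widetilde{T_k},\widetilde{S_k}$ via Lemma \ref{lem3} on $\mathcal{M}_\Gamma$, check that the limit classes equal $\theta$, invoke Proposition \ref{prop2} to get diffuseness and hence membership in $\mathcal{C}(\theta)$, then pass to the limit in $\int|u_{T_k}-u_{S_k}|$; your restriction to the full-measure set of $\theta$ where Proposition \ref{prop2} applies, and your use of $L^1$-convergence of potentials rather than ``dominated convergence'', only make explicit what the paper leaves implicit. You are right to drop the speculative Guedj-type route, which cannot work for arbitrary $\theta$: for $\theta=[L]-[E_p]$, averages of lines through $p$ against non-atomic measures on the pencil lie in $\mathcal{C}(\theta)$, yet they can converge to a single line.
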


\begin{proof}
    Let $(\theta_n)$ be a sequence of $(\partial \mathbf{H},\nu)$ that converges to $\theta$. Let $(T_n)$ and $(S_n)$ be sequences of currents in $\mathcal{C}(\theta_n)$ such that $$\int |u_{T_n}-u_{S_n}| \geq \text{diam}(\theta_n) - \frac{1}{n}.$$
    Up to pass to a subsequence, we can assume $$\text{diam}(\theta_{n}) \rightarrow \limsup_n \text{diam}(\theta_n).$$
    Moreover, $(\widetilde{T_n})$ and $(\widetilde{S_n})$ have bounded mass in every model. Up to extract, $\widetilde{T_n}$ converges to $T \in W^+(\mathbb{P}^2)$ and $\widetilde{S_n}$ converges to $S\in W^+(\mathbb{P}^2)$. Remark that $T,S$ are cohomologuous to $\theta$ and by Proposition \ref{prop2}, $T$ and $ S$ belongs to $\mathcal{C}(\theta)$. By dominated convergence, we obtain
    $$\int |u_T-u_S|=\limsup \text{diam}(\theta_n)$$
    and thus $\text{diam}(\theta) \geq \limsup \text{diam}(\theta_n).$
\end{proof}

We now study the action of a birational map by pullback on potentials.

\begin{prop}\label{prop6}
    If $S$ and $S'$ are two currents in $\mathcal{C}(f^*\theta)$ with normalized potentials $u_S$ and $ u_{S'}$, then there exist $T$ and $T'$ in $\mathcal{C}(\theta)$ such that 
    $$u_S-u_{S'}=u_T\circ f - u_{T'} \circ f +C $$
    where $u_T$ and $u_{T'} $ are normalized potentials of $T$ and $T'$, and 
    $$C=\int u_{T'} \circ f - u_T\circ f.$$
\end{prop}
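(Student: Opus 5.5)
The natural candidates are $T:=f_*S$ and $T':=f_*S'$, realized on $\mathbb{P}^2$. We then compare potentials through the relation $f^*f_*=\mathrm{id}$ on Weil currents.

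First we check that $T,T'\in\mathcal{C}(\theta)$. Since $f_*=(f^{-1})^*$ and $f_*f^*=\mathrm{id}$ by functoriality, the Weil current $f_*\widetilde{S}$ is positive and closed, and its class is $f_*f^*\theta=\theta$. We must also show it is diffuse, meaning it is the strict transform of its realization on $\mathbb{P}^2$ and that realization charges no curve. This is where Assumption \ref{hyp1} and the argument of Proposition \ref{prop2} enter. For any positive closed Weil current $R$ of class $\theta$ whose realization on $\mathbb{P}^2$ charges no curve, the chain of inequalities
\[
1=\sum\alpha_p^2\le\sum\nu(\widetilde{R_{\mathbb{P}^2}},p)^2\le 1
\]
(Lemmas \ref{lem6}, \ref{lem8}, \ref{lem7}) forces $R=\widetilde{R_{\mathbb{P}^2}}$. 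It remains to see that $(f_*\widetilde S)_{\mathbb{P}^2}$ charges no curve. If it charged a curve $D$, pulling back by $f$ would give $\widetilde S=f^*f_*\widetilde S$. Then $\widetilde S$ would charge the strict transform of $D$ in some model, or an exceptional divisor, which contradicts the diffuseness of $\widetilde S$. The same argument applies to $S'$. I expect this step to be the main obstacle. The subtle point is that $f$ may contract curves, so diffuseness must be tracked on all models and not only on $\mathbb{P}^2$. The Weil-current formalism, in which $f^*$ is a bijection with inverse $f_*$, is exactly what handles this.

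Next we compute potentials. Write $T=\kappa+dd^cu_T$ and $T'=\kappa+dd^cu_{T'}$ on $\mathbb{P}^2$. Pulling back on $\mathbb{P}^2$ (through a resolution $\alpha,\beta=f\circ\alpha$) gives
\[
f^*_{\mathbb{P}^2}T=f^*_{\mathbb{P}^2}\kappa+dd^c(u_T\circ f),
\]
where $u_T\circ f$ denotes $\alpha_*(u_T\circ\beta)$, defined off the finite indeterminacy set. The same holds for $T'$. The realization on $\mathbb{P}^2$ of $f^*T$ is $S$, and there is no extra curve term: by the first step, $f^*$ of the diffuse Weil current $\widetilde{T_{\mathbb{P}^2}}$ recovers $\widetilde S$, and we take its realization on $\mathbb{P}^2$. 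Hence
\[
S-S'=dd^c(u_T\circ f-u_{T'}\circ f).
\]
On the other hand, $S-S'=dd^c(u_S-u_{S'})$. Therefore $u_S-u_{S'}-(u_T\circ f-u_{T'}\circ f)$ is pluriharmonic on $\mathbb{P}^2$ minus a finite set. It is $L^1$ and extends across points, so it is a constant $C$.

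Finally we identify $C$ by integration. Integrate the identity against $\kappa^2$ and use the normalization $\int u_S\,\kappa^2=\int u_{S'}\,\kappa^2=0$. This yields
\[
C=\int (u_{T'}\circ f-u_T\circ f)\,\kappa^2,
\]
which is the stated formula. The functions $u_T\circ f$ are integrable because they are quasi-psh, being differences of $\deg(f)\kappa$-psh functions.
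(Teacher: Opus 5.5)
Your choice $T=f_*\widetilde S$, $T'=f_*\widetilde{S'}$, the check that they lie in $\mathcal{C}(\theta)$, and the last steps (a pluriharmonic $L^1$ function is constant; $C$ is found by integrating against $\kappa^2$) match the paper. For diffuseness the paper argues more directly: on any model, the realization of $f_*\widetilde S$ is the pushforward by a birational morphism of a realization of $\widetilde S$, so it charges no curve. The detour through Proposition \ref{prop2} is therefore unnecessary.

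The genuine gap is your claim that the usual pullback $f^*_{\mathbb{P}^2}T_{\mathbb{P}^2}=\alpha_*\beta^*T_{\mathbb{P}^2}$ equals $S$ with ``no extra curve term''. This is the current whose potential is $u_T\circ f$. The claim conflates two different pullbacks. What equals $\widetilde S$ is $f^*$ of the \emph{Weil} current $\widetilde{T_{\mathbb{P}^2}}$. By contrast, $\alpha_*\beta^*T_{\mathbb{P}^2}$ is the realization of $f^*$ of the \emph{Cartier} current $T_{\mathbb{P}^2}$.

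These are not the same. By Lemma \ref{lem8} (iterated), $\beta^*T_{\mathbb{P}^2}$ exceeds the strict transform $\widetilde T_{X,\beta}=\widetilde S_{X,\alpha}$ by $\sum_p\nu(\widetilde{T_{\mathbb{P}^2}},p)E_p$, where $p$ runs over the base points of $\beta$, i.e.\ of $f^{-1}$. The exceptional curves of $\beta$ not contracted by $\alpha$ are sent by $\alpha$ onto the curves contracted by $f$. This is exactly what the paper computes with Lemma \ref{lem6}:
\[
f^*_{\mathbb{P}^2}T_{\mathbb{P}^2}=\alpha_*\beta^*\beta_*\widetilde S_{X,\alpha}=S+E,\qquad E=\sum_{p\in\mathrm{base}(\beta)}\langle[\widetilde S_{X,\alpha}],[E_p]\rangle\,\alpha_*E_p .
\]
For example, take the standard quadratic involution with base points $p_1,p_2,p_3$. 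Then $E=\sum_i\nu(T_{\mathbb{P}^2},p_i)L_i$, where $L_i$ is the line contracted onto $p_i$. This is nonzero as soon as $\langle\theta,[E_{p_i}]\rangle>0$ for some $i$ (Lemma \ref{lem1}), and nothing in the statement excludes that. So the identity $S=f^*_{\mathbb{P}^2}\kappa+dd^c(u_T\circ f)$ on which your argument rests is false in general.

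The conclusion $S-S'=dd^c(u_T\circ f-u_{T'}\circ f)$ is still correct, but only because of a cancellation your proposal never identifies. The coefficients of $E$ are intersection numbers of the class of $\widetilde S_{X,\alpha}$ with the $[E_p]$. Hence they depend only on the cohomology class $f^*\theta$ of $\widetilde S$. Equivalently, they are the Lelong numbers of $T$ at the base points of $f^{-1}$, which by Lemma \ref{lem1} are determined by $\theta$. So $T'$ produces the same divisor $E$, and $S-S'=f^*_{\mathbb{P}^2}(T-T')$.

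This observation is the heart of the paper's proof, and it must replace your ``no extra curve term'' step. Once it is in place, the rest of your argument goes through unchanged.
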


\begin{proof}
     We write $S=\lambda \kappa +dd^c u_S$ and $S'=\lambda \kappa +dd^c u_{S'}$ with $\lambda=\langle f^* \theta, [L] \rangle$. We define $T:=f_*\widetilde{S}$ and $T':=f_*\widetilde{S'}$. These two currents are cohomologous to $\theta$. We show they are in $\mathcal{C}(\theta)$, that is, they do not charge curves in any model.
     \begin{lem}\label{lem15}
         If $\widetilde{R} \in W(\mathbb{P}^2)$ is the strict transform of a current $R \in \mathcal{D}(\mathbb{P}^2)$ that does not charge curves on $\mathbb{P}^2$, then for any $f \in \mathrm{Bir}(\mathbb{P}^2)$, the Weil current $f^*\widetilde{R}$ does not charge curves in any model.
     \end{lem}

     \begin{proof}
        Consider a model $(X,\pi)$ and a resolution $(Z,\pi \circ \alpha)$ of $f_{X,\pi}=\pi^{-1} f \pi$ so that we have the following diagram 
         \[ 
         \begin{tikzcd}
            & Z \arrow[ld, "\alpha", swap] \arrow[rd, "\beta"] & \\
            X\arrow[rr, dashed, "f_{X,\pi}"]& & X  
        \end{tikzcd} 
        \]
        The realization of $f^*\widetilde{R}$ on $(X,\pi)$ is given by $\alpha_*\widetilde{R}_{Z,\pi \circ \beta}$ and thus do not charge curves.
     \end{proof}

     Consider a resolution $(X,\alpha)$ of $f$ and $\beta:=f\circ \alpha$ so that we have $T=\beta_*\widetilde{S}_{X,\alpha}$. Then, we consider the usual pullback
        $$ (f^*T)_{\mathbb{P}^2}=\alpha_* \beta^* \beta_* \widetilde{S}_{X,\alpha} $$
    By Lemma \ref{lem6}, we can write         
         $(f^*T)_{\mathbb{P}^2}= S+E $
    with $$E:=\sum_{p \in \text{base}(\beta)} \langle [S_{X,\alpha}], [E_p] \rangle \alpha_*E_p.$$
    Similarly, we have $(f^*T')_{\mathbb{P}^2} =S'+E$
    with the same exceptional current $E$ since $\widetilde{S}$ and $\widetilde{S'}$ are cohomologous.
    We obtain $S-S'=f^*_{\mathbb{P}^2}(T-T')$
    and thus the potentials $u_S-u_{S'}$ and $u_T\circ f -u_{T'}  \circ f$ coincide up to a constant.
\end{proof}

As a corollary, we obtain the following result which allows us to use a dynamical argument to show that the diameter function is zero on $(\partial \mathbb{H}, \nu)$.

\begin{prop}\label{prop7}
    We have $$\emph{diam}(\underline{f}^*\theta)\leq \frac{2}{\langle f^*\theta , [L] \rangle } \sup_{T,T' \in \mathcal{C}(\theta)} \int |u_T \circ f - u_{T'} \circ f |.$$    
\end{prop}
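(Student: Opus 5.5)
The plan is to compare normalizations: $\mathcal{C}(\underline{f}^*\theta)$ is obtained from $\mathcal{C}(f^*\theta)$ by dividing by $\lambda:=\langle f^*\theta,[L]\rangle$, and then Proposition \ref{prop6} converts differences of potentials of currents in $\mathcal{C}(f^*\theta)$ into differences of pulled-back potentials of currents in $\mathcal{C}(\theta)$. The factor $2$ will come from the normalizing constant $C$ in Proposition \ref{prop6}.

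\textbf{Step 1 (rescaling).} Since $\underline{f}^*\theta=\lambda^{-1}f^*\theta$, and since both conditions defining $\mathcal{C}(\cdot)$ (not charging curves, prescribed class of the strict transform) are invariant under multiplication by positive scalars, we have $\mathcal{C}(\underline{f}^*\theta)=\lambda^{-1}\mathcal{C}(f^*\theta)$. If $S=\lambda\kappa+dd^cu_S$ with $\int u_S\,\kappa^2=0$, then $\lambda^{-1}S=\kappa+dd^c(\lambda^{-1}u_S)$, and $\lambda^{-1}u_S$ is the normalized potential. Hence
\[\mathrm{diam}(\underline{f}^*\theta)=\frac{1}{\lambda}\sup_{S,S'\in\mathcal{C}(f^*\theta)}\int|u_S-u_{S'}|,\]
where $u_S,u_{S'}$ are the normalized potentials used in Proposition \ref{prop6}.

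\textbf{Step 2 (transfer).} Fix $S,S'\in\mathcal{C}(f^*\theta)$. Proposition \ref{prop6} gives $T,T'\in\mathcal{C}(\theta)$ such that $u_S-u_{S'}=u_T\circ f-u_{T'}\circ f+C$, with $C=\int(u_{T'}\circ f-u_T\circ f)$. The integral is taken against $\kappa^2$, normalized to have total mass one. Then
\[|C|\le \int|u_T\circ f-u_{T'}\circ f|,\]
and the triangle inequality gives
\[\int|u_S-u_{S'}|\le 2\int|u_T\circ f-u_{T'}\circ f|\le 2\sup_{T,T'\in\mathcal{C}(\theta)}\int|u_T\circ f-u_{T'}\circ f|.\]
Taking the supremum over $S,S'$ and combining with Step 1 yields the stated inequality.

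\textbf{Main obstacle.} The only delicate point is the normalization convention: it must be checked that $\kappa^2$ has mass one, or else the factor $2$ absorbs the volume constant consistently on both sides. One should also note that $u_T\circ f$ is integrable, which holds since $f$ is a quasi-psh pullback off an indeterminacy set of measure zero. Everything else is the direct bookkeeping above.
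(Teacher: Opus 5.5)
Your argument is correct and is exactly the bookkeeping the paper leaves implicit when it states this proposition as a direct corollary of Proposition \ref{prop6}: rescale by $\lambda=\langle f^*\theta,[L]\rangle$, then bound $|C|$ by $\int|u_T\circ f-u_{T'}\circ f|$ and apply the triangle inequality to get the factor $2$. The normalization issue you flag does not arise, because $\kappa$ represents $[L]$, so $\int_{\mathbb{P}^2}\kappa^2=[L]^2=1$.
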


\subsection{Skoda's Theorem}

\begin{lem}\label{lem16}
    For every $\alpha>0$ and for $\nu$-almost every $\theta \in \partial \mathbb{H}$, there exists $C>0$, such that for any $T, T' \in  \mathcal{C}(\theta)$, with normalized potentials $u_T, u_{T'}$, then 
    $$\emph{vol}(|u_T-u_{T'}| \geq t) \leq C\exp(-\alpha t)$$
    for every $t >0$.
\end{lem}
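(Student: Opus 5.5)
Lemma \ref{lem16} is a uniform exponential integrability statement, so the plan is to derive it from Skoda's uniform integrability theorem \cite[Theorem 8.11]{GZ}. That theorem says: if a family of $\kappa$-psh functions is compact in $L^1$ and all Lelong numbers are bounded by $\nu_0$, then for every $\alpha < 2/\nu_0$ the integrals $\int e^{-\alpha u}\,\kappa^2$ are bounded uniformly. I will apply it to $\mathcal{C}(\theta)$. The obstruction is that $\alpha>0$ is arbitrary, so I need the Lelong numbers of currents in $\mathcal{C}(\theta)$ to be arbitrarily small, uniformly, away from a finite set.

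\textbf{Step 1 (Lelong numbers are determined by $\theta$).} For $T\in\mathcal{C}(\theta)$, Lemma \ref{lem1} and the identity $[\widetilde{T}]=\theta$ give $\nu(T,p)=\langle\theta,[E_p]\rangle=\alpha_p(\theta)$ for every $p\in\mathbb{P}^2$. By Lemma \ref{lem7}, $\sum_p\alpha_p^2\le 1$. Hence, given $\alpha$, only finitely many points $p_1,\dots,p_k$ satisfy $\alpha_{p_j}\ge 1/\alpha$. Every $T\in\mathcal{C}(\theta)$ has Lelong number $<1/\alpha$ off this finite set and exactly $\alpha_{p_j}$ at $p_j$.

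\textbf{Step 2 (removing the large Lelong numbers).} I see two possible routes at the $p_j$.

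\emph{Option (a).} Note that $u_T-u_{T'}$ is bounded near $p_j$ up to controlled error. Since $T$ and $T'$ have the same Lelong number at $p_j$ and the same strict transform class in every model, both potentials behave like $\alpha_{p_j}\log|z-p_j|$ plus terms with small Lelong numbers after finitely many blow-ups.

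\emph{Option (b), which I prefer.} Work on the model $X$ obtained by blowing up $p_1,\dots,p_k$. By Proposition \ref{prop2}, $\widetilde{T}_X$ has Lelong numbers $\alpha_q<1/\alpha$ at every $q\in X$, including points on the exceptional curves, because $\sum\alpha_q^2\le1$ forces only finitely many large values and I can iterate the blow-ups finitely many times. Write $\widetilde{T}_X=\eta+dd^c v_T$ with $\eta$ a fixed smooth representative of $\theta_X$. Then $v_T=u_T\circ\pi-\sum\alpha_{p_j}\log|\cdot|$-type terms, so $v_T-v_{T'}=(u_T-u_{T'})\circ\pi+c_{T,T'}$. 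The family $\{v_T\}$ is $\eta'$-psh for a fixed Kähler $\eta'\ge\eta$, compact in $L^1$ after normalization, and has Lelong numbers $<1/\alpha$ everywhere. Skoda's theorem on $X$ then gives $\int_X e^{-2\alpha v_T}\le C$ uniformly; the factor $2$ here is harmless since $\alpha$ is arbitrary.

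\textbf{Step 3 (the tail bound).} Since $v_T\le \sup v_T$ is bounded above uniformly by compactness, $|v_T-v_{T'}|\ge t$ forces $v_T\le -t/2+c$ or $v_{T'}\le -t/2+c$. Chebyshev's inequality then gives $\mathrm{vol}_X(|v_T-v_{T'}|\ge t)\le C e^{-\alpha t}$. The constants $c_{T,T'}$ are uniformly bounded, by the $L^1$ normalizations and compactness, so this transfers to $u_T-u_{T'}$ through $\pi$; volumes on $X$ and on $\mathbb{P}^2$ are comparable because $\pi^*\kappa^2\le C\kappa_X^2$.

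The main obstacle is making Step 2 rigorous. I need an $L^1$-compact family on a \emph{fixed} model with uniformly small Lelong numbers, which means checking that $\mathcal{C}(\theta)$, lifted to $X$, is closed and compact. I would get this from the argument of Lemma \ref{lem14} together with Proposition \ref{prop2}. I also need the finitely many exceptional-curve points with large $\alpha_q$ to be exhausted by a finite tower of blow-ups, which follows from $\sum\alpha_q^2\le 1$.
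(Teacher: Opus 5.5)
Your Option (b) is essentially the paper's proof. It blows up the finitely many points where $\langle\theta,[E_p]\rangle$ is large and uses Lemmas \ref{lem8} and \ref{lem1} to get $\pi^*T-\pi^*T'=\widetilde{T}_X-\widetilde{T'}_X$. It then writes $(u_T-u_{T'})\circ\pi$ as a difference of potentials with uniformly small Lelong numbers and concludes with Skoda's uniform integrability theorem; the paper bounds $\int e^{\alpha|u_T-u_{T'}|}$ by Cauchy--Schwarz instead of your Chebyshev tail estimate, which is equivalent. Your explicit compactness check for the lifted family (via Lemma \ref{lem14} and Proposition \ref{prop2}) supplies a point the paper leaves implicit, and since under Assumption \ref{hyp1} the $\alpha_q$ vanish at infinitely near points, blowing up finitely many points of $\mathbb{P}^2$ already suffices.
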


\begin{proof}
    It suffices to show that the functions $\exp(\alpha |u_T-u_{T'}|)$ are uniformly bounded in $L^1(\mathbb{P}^2)$ when $T,T' \in \mathcal{C}(\theta)$.  Consider a model $(X,\pi)$ that will be chosen conveniently later. We endow $X$ with a Kähler form $\kappa_{X,\pi}$ and we consider $C>0$ such that $\pi^*\text{vol}_{\mathbb{P}^2} \leq C \text{vol}_{X,\pi}.$ Thus, we have 
    $$\int_{\mathbb{P}^2}  \exp(\alpha |u_T-u_{T'}|) \leq C \int_X \exp(\alpha |u_T \circ \pi -u_{T'} \circ \pi|) $$
    We will find quasi-psh functions $v_T$ and $v_{T'}$ with small Lelong numbers satisfying  $u_T\circ \pi - u_{T'} \circ \pi=v_T -v_{T'}$. By Lemmas \ref{lem1} and \ref{lem7}, for any $\epsilon>0$, there exists a model $(X,\pi)$ on which the Lelong numbers of $\widetilde{T}$ and $\widetilde{T'} $ are smaller than $\epsilon$. Indeed, it suffices to consider the model obtain by blowing-up all the points satisfying $\langle \theta, [E_p]\rangle \geq \epsilon$.
    Moreover, by Lemmas \ref{lem8} and \ref{lem1}, we have $\pi^*T-\pi^*T'=\widetilde{T}- \widetilde{T'}.$
    Thus, $$dd^c (u_T\circ \pi - u_{T'} \circ \pi )=dd^c(v_T -v_{T'})$$
    where $v_{T}, v_{T'}$ have Lelong numbers smaller than $\epsilon$.
    We normalize by 
    $$\int_{\mathbb{P}^2} v_T \circ \pi^{-1}=0$$
    and similarly for $v_{T'}$ so that $u_{T}\circ \pi -u_{T'} \circ \pi=v_{T}-v_{T'}.$
    We obtain
    $$\int_X \exp(\alpha |u_T\circ \pi-u_{T'}\circ \pi|) \leq \left( \int_X \exp(2\alpha |v_{T}|)  \right)^\frac{1}{2} \left( \int_X \exp(2\alpha |v_{T'}|)  \right)^\frac{1}{2} $$
    and Skoda's uniform integrability theorem \cite[Theorem 8.11]{GZ} asserts that the right hand side is finite when $\epsilon$ is small enough.    
\end{proof}

\subsection{Diameter is zero}

\begin{prop}\label{prop8}
For $\mu^\mathbb{N}\otimes \nu$-almost every $(\omega, \theta) \in \Omega \times \partial \mathbb{H},$ we have 
$$\emph{diam}(\underline{f_{n}}^* \cdots \underline{f_1}^* \theta) \longrightarrow 0. $$
\end{prop}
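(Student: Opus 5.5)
We iterate Proposition \ref{prop7} along the word $f_1\cdots f_n$ and bound the resulting integrals by a Skoda-type tail estimate (Lemma \ref{lem16}) combined with the volume estimate of Proposition \ref{prop5}. Write $F_n:=f_1\cdots f_n$, so that by functoriality $f_n^*\cdots f_1^*\theta=F_n^*\theta$ and $\underline{f_n}^*\cdots\underline{f_1}^*\theta=\underline{F_n}^*\theta$. Applying Proposition \ref{prop7} to the single map $F_n$ gives
$$\mathrm{diam}(\underline{F_n}^*\theta)\leq \frac{2}{\langle F_n^*\theta,[L]\rangle}\sup_{T,T'\in\mathcal{C}(\theta)}\int_{\mathbb{P}^2}|u_T\circ F_n-u_{T'}\circ F_n|.$$
By Lemma \ref{lem4}, for $\mu^\mathbb{N}\otimes\nu$-almost every $(\omega,\theta)$ the prefactor is at most $2/(C\deg(F_n))$. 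It therefore suffices to show that the supremum of the integrals is $o(\deg F_n)$. We restrict to $\theta$ in the full-measure set where Lemma \ref{lem16} holds; it applies for each $\alpha$ in a countable sequence $\alpha\to\infty$, so a single null set suffices.

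Set $\phi:=|u_T-u_{T'}|$. We compute $\int\phi\circ F_n=\int_0^\infty \mathrm{vol}(F_n^{-1}\{\phi\geq t\})\,dt$ by a layer-cake argument. To bound the volume of $B_t:=F_n^{-1}\{\phi\ge t\}$, apply Proposition \ref{prop5} to $\gamma=F_n$, which lies in $\Gamma$ since $\Sigma_\mu$ is symmetric:
$$C\exp(-\alpha t)\ge\mathrm{vol}\{\phi\ge t\}\geq \mathrm{vol}(F_n(B_t))\geq (C_1\mathrm{vol}B_t)^{C_2\deg F_n}.$$
The first inequality is Lemma \ref{lem16}; the middle one holds up to the indeterminacy set, which has measure zero. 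Hence
$$\mathrm{vol}(B_t)\le C_1^{-1}\min\Bigl(1,\bigl(Ce^{-\alpha t}\bigr)^{1/(C_2\deg F_n)}\Bigr).$$
Split the integral at $t_0=K\deg F_n$. The part $t\le t_0$ contributes at most $C_1^{-1}K\deg F_n$. The part $t\geq t_0$ contributes at most
$$C_1^{-1}C^{1/(C_2 d)}\int_{t_0}^\infty e^{-\alpha t/(C_2 d)}\,dt\lesssim \frac{C_2 d}{\alpha}\,e^{-\alpha K/C_2},\qquad d=\deg F_n.$$
Both pieces are $O(\deg F_n)$ uniformly in $T,T'$, and the coefficient of the second can be made as small as we like by taking $\alpha$ large.

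This only yields $\mathrm{diam}(\underline{F_n}^*\theta)=O(1)$ rather than $o(1)$, and I expect this to be the main obstacle: the threshold piece $t\le t_0$ is genuinely of size $\deg F_n$ unless the volume bound is sharpened. I would first try to exploit the freedom in Lemma \ref{lem16}: its constant $C$ is uniform in $T,T'$, and $\alpha$ is arbitrary, so we can take $\alpha=\alpha_k\to\infty$. For $t\ge t_0$ the bound gives $\mathrm{vol}(B_t)\lesssim e^{-\alpha t/(C_2 d)}$. Integrating with threshold $t_0=\epsilon d$ instead of $Kd$, the first part contributes $\epsilon d$. The tail contributes about $(C_2 d/\alpha)e^{-\alpha\epsilon/C_2}$ times the factor $C^{1/(C_2 d)}$, which tends to $1$ as $d\to\infty$. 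Hence the sum is at most $\epsilon d+O(d/\alpha)$.

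Dividing by $\deg F_n$, this gives $\limsup_n \mathrm{diam}(\underline{F_n}^*\theta)\le 2C^{-1}C_1^{-1}(\epsilon+C_2/\alpha)$ for every $\epsilon>0$ and every $\alpha=\alpha_k$, which forces the limit to be $0$. The points that must be checked carefully are these. First, $C_1$ in Proposition \ref{prop5} enters only as $C_1^{-1}$, not raised to a power, so it causes no growth. Second, the constant $C$ of Lemma \ref{lem16} depends on $\alpha$ and $\theta$ but not on $n$, so its root $C^{1/(C_2 d)}\to1$. If the volume estimate of Proposition \ref{prop5} turns out too weak in this form, I would fall back on a dynamical argument. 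That argument uses the upper semicontinuity of $\mathrm{diam}$ (Lemma \ref{lem14}) together with stationarity of $\nu$: the iterated bound shows that $\mathrm{diam}$ contracts on average, so its $\nu$-integral is zero.
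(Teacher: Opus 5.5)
Your argument is correct and follows the paper's route: Proposition \ref{prop7} applied to $f_1\cdots f_n$, the layer-cake formula, the volume estimate of Proposition \ref{prop5}, the Skoda-type tail bound of Lemma \ref{lem16} with $\alpha$ large, and Lemma \ref{lem4} to replace $\langle f_n^*\cdots f_1^*\theta,[L]\rangle$ by $\deg(f_1\cdots f_n)$. The only difference is bookkeeping. The paper shows $\phi_n(t)\to 0$ pointwise by contradiction and concludes by dominated convergence, whereas you split the integral at $\epsilon\deg(f_1\cdots f_n)$ and let $\alpha\to\infty$ along a countable sequence. Your version makes the uniformity in $T,T'$ explicit, and it makes both the initial $O(1)$ detour and the fallback unnecessary.
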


\begin{proof}
    By Proposition \ref{prop7}, it suffices to show that $$I_{n}:=\frac{1}{\lambda_{n}} \int |u_T-u_{T'}| \circ f_1 \cdots f_{n} \longrightarrow 0$$
    for any $T,T' \in \mathcal{C}(\theta)$ where $\lambda_n:=\langle f_n^* \cdots f_1^*\theta , [L]\rangle$.
    We can write $$I_{n}=\int_0^{+\infty} \phi_{n}(t)\, dt$$
    with $\phi_n(t):=\text{vol}(|u_T-u_{T'}| \circ f_1 \cdots f_n \geq \lambda_n t).$
    Using Proposition \ref{prop5}, we have 
    $$\phi_{n}(t) \leq C_1^{-1}\text{vol}(|u_T-u_{T'}| \geq \lambda_{n}t)^\frac{1}{C_2\deg(f_1\cdots f_{n})}. $$
    Then, by Lemma \ref{lem16}, we have
    $$\phi_{n}(t) \leq C' \exp \left( -\frac{\alpha \lambda_{n}t}{C_2\deg(f_1 \cdots f_{n})}\right)$$
    for some $\alpha>0$ that will be chosen conveniently afterwards. By Lemma \ref{lem4}, we obtain $$\phi_{n}(t) \leq C' \exp(-\alpha C''t).$$
    
    Suppose there exists $t_0>0$ such that $\phi_{n}(t_0)$ does not converge to zero. Up to pass to a subsequence, we may assume $\phi_{n}(t_0)\geq \gamma >0$ for every $n$ and thus we have $$(C_1\gamma)^{C_2\deg(f_1 \cdots f_{n})} \leq \text{vol}(|u_T-u_{T'}| \geq \lambda_{n}t_0).$$
    As above, Lemmas \ref{lem4} and \ref{lem16} give
    $$(C_1\gamma)^{C_2\deg(f_1 \cdots f_{n})} \leq C'\exp (-\alpha C'' t_0 \deg(f_1 \cdots f_{n})).$$
    Notice that $C'$ depends on $\alpha$, but $C$ and $C''$ do not. We obtain a contradiction when $n \rightarrow \infty$ if $\alpha$ is large enough. 
    Thus, we have $\phi_{n} (t) \rightarrow 0$ for every $t>0$ and  $\lim I_{n}=0$ follows by dominated convergence.
\end{proof}

\begin{cor}\label{cor1}
For $\nu$-almost every $\theta \in \partial \mathbb{H}$, we have $\emph{diam}(\theta)=0$.    
\end{cor}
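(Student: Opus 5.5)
We plan to deduce Corollary \ref{cor1} from Proposition \ref{prop8} by a stationarity argument for the hitting measure, in the spirit of Cantat--Dujardin.

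\emph{Step 1: stationarity.} The first thing to record is how $\nu$ behaves under the boundary maps. Consider the measure-preserving map
$$F:(\omega,\theta)\mapsto(\sigma\omega,\underline{f_1}^*\theta)$$
on $\Omega\times\partial\mathbb{H}$, where $\omega=(f_n)$. Since the walk is a right walk, $\theta_\omega = f_1^*\theta_{\sigma\omega}$ up to normalization; hence $\theta_\omega=\underline{f_1}^*\theta_{\sigma\omega}$. The variables $f_1$ and $\sigma\omega$ are independent, so $\nu=\sum_g\mu(g)\,(\underline{g}^*)_*\nu$. Iterating, for every $n$ we get
$$\nu=\int (\underline{f_n}^*\cdots\underline{f_1}^*)_*\nu \, d\mu^{\mathbb N}(\omega).$$
We must check the order of composition against the one appearing in Proposition \ref{prop8}. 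There, $\underline{f_n}^*\cdots\underline{f_1}^*\theta$ corresponds to the class $(f_1\cdots f_n)^*\theta$. Pushing $\nu$ forward by this map over the first $n$ letters is exactly the $n$-fold stationarity, because the law of $f_1\cdots f_n$ is $\mu^{*n}$.

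\emph{Step 2: convergence in measure.} Next we transfer Proposition \ref{prop8} to a statement about $\nu$. Set $D_n(\omega,\theta):=\mathrm{diam}(\underline{f_n}^*\cdots\underline{f_1}^*\theta)$. This is measurable by Lemma \ref{lem14}, and it is bounded uniformly because normalized $\kappa$-psh functions form an $L^1$-compact family. Proposition \ref{prop8} gives $D_n\to0$ almost surely, so by dominated convergence
$$\int D_n\,d(\mu^{\mathbb N}\otimes\nu)\to0.$$
By Step 1 this integral equals $\int\mathrm{diam}\,d\nu$ for every $n$. Hence $\int\mathrm{diam}\,d\nu=0$. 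Since $\mathrm{diam}\ge0$, we conclude $\mathrm{diam}=0$ for $\nu$-almost every $\theta$.

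\emph{Main obstacle.} The delicate point is Step 1: we need the correct stationarity identity with the matching order of composition. Specifically, we need that the pushforward of $\mu^{\mathbb N}\otimes\nu$ under $(\omega,\theta)\mapsto \underline{f_n}^*\cdots\underline{f_1}^*\theta$ is $\nu$. We would verify this directly from the definition $\theta_\omega=\lim (f^n_\omega)^*[L]/\deg$. Using the cocycle relation $(f^{n+k}_\omega)^*=(f^n_\omega)^*(f^k_{\sigma^n\omega})^*$ and the continuity of $\underline{g}^*$ on $\partial\mathbb H$, we get $\theta_\omega=\underline{f_1}^*\cdots$ applied to $\theta_{\sigma^n\omega}$, with $\sigma^n\omega$ independent of the first $n$ letters.

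If the order turns out reversed relative to Proposition \ref{prop8}, that proposition holds for arbitrary i.i.d. products. Indeed, its proof only used Lemma \ref{lem4}, and the statement of Lemma \ref{lem4} concerns $f_1\cdots f_n$. So we would simply relabel the letters to match.
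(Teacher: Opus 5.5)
Your argument is correct. It differs from the paper's only in how it passes from Proposition~\ref{prop8} to a statement about $\nu$.

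Both proofs rest on the same fact: $\mu^{\mathbb N}\otimes\nu$ is invariant under the skew product $F(\omega,\theta)=(\sigma\omega,\underline{f_1}^*\theta)$. The $n$-th iterate of $F$ has second coordinate exactly $\underline{f_n}^*\cdots\underline{f_1}^*\theta$, which is the expression in Proposition~\ref{prop8}. So your worry about the order of composition is moot, and since the letters are i.i.d., the two orders have the same law anyway.

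The paper argues by contradiction. If $\nu(\mathrm{diam}\ge\eta)=\delta>0$, it applies Birkhoff's theorem together with the ergodicity of $\mu^{\mathbb N}\otimes\nu$ under $F$ (asserted without proof). This makes almost every orbit visit $\{\mathrm{diam}\ge\eta\}$ with asymptotic frequency $\delta$, contradicting $D_n\to 0$.

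You use invariance alone: $\int D_n\,d(\mu^{\mathbb N}\otimes\nu)=\int\mathrm{diam}\,d\nu$ for every $n$. Since $\mathrm{diam}$ is bounded, bounded convergence gives $\int\mathrm{diam}\,d\nu=0$.

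Your route is more economical in two ways:
\begin{itemize}
\item it never needs ergodicity;
\item your Step~1 actually derives the invariance from $\theta_\omega=\underline{f_1}^*\theta_{\sigma\omega}$ and the independence of $f_1$ and $\sigma\omega$, whereas the paper takes it for granted.
\end{itemize}
The paper's Birkhoff argument yields a little more, namely a positive visiting frequency for a positive-measure set of large diameter. That extra information is not used.
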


\begin{proof}
    Consider $A_\eta:=\{\text{diam}(\theta) \geq \eta \} $ and suppose that $A_\eta$ has positive measure $\delta >0$ for some $\eta>0$. Consider the dynamical system $F: \Omega \times \partial \mathbb{H} \rightarrow \Omega \times \partial \mathbb{H}$ defined by
    $F(\omega, \theta)=(\sigma(\omega), \underline{f_1}^*\theta).$
    The measure $\mu^\mathbb{N}\otimes \nu$ is $F$-invariant and ergodic. Birkhoff's ergodic Theorem applied to $\Omega \times A_\eta$ gives that 
    $$ \frac{1}{N}\# \{ 1\leq n \leq N, \, \underline{f_n}^* \dots \underline{f_1}^* \theta \in A_\eta  \} \longrightarrow \delta $$
    for $\mu^\mathbb{N}\otimes \nu$-almost every $(\omega, \theta)$. 
    This contradicts Proposition \ref{prop8}.
\end{proof}

\subsection{Further results}
Note that with the same ideas, we can prove the following theorem.
\begin{thm}
    Consider a birational map $f$ which satisfies the following property: there exists $C\geq 1$ such that the order of any of the base points of $f^n$ is bounded by $C$ for all $n \in \mathbb{Z}$.
    Then, for any $r\geq 2$, and $d_1, \dots, d_r \in \mathbb{N}^\times$, there exists a Zariski dense subset $V \subset \mathrm{Bir}_{d_1}(\mathbb{P}^2)\times \cdots \times \mathrm{Bir}_{d_r}(\mathbb{P}^2)$ such that for any probability measure $\mu$ supported on $\{f^{\pm 1}, g_1^{\pm 1},\dots, g_r^{\pm 1}\}$ with $(g_1, \dots, g_r) \in V$, the following hold.
    \begin{enumerate}
        \item For $\mu^\mathbb{N}$-almost every $\omega = (f_n)_{n \geq 1}\in \Omega$, there exists a positive closed $(1,1)$ current $T_\omega$ of mass one on $\mathbb{P}^2$ which does not charge curves, such that
        $$\frac{1}{\deg(f^n_\omega)} (f^n_\omega)^*S \longrightarrow T_\omega $$
        for every positive closed $(1,1)$ current $S$ of mass one on $\mathbb{P}^2$.
        \item Moreover, $T_\omega \neq T_{\omega'}$ for $\mu^\mathbb{N} \otimes \mu^\mathbb{N} $-almost every $(\omega,\omega') \in \Omega \times \Omega$.
    \end{enumerate}
\end{thm}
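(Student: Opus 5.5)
The plan is to run the proof of Theorem \ref{thm1} again, with the group $\Gamma=\langle f,g_1,\dots,g_r\rangle$ in place of $\langle g_1,\dots,g_r\rangle$. There are two places where the earlier argument used genericity. The first is Assumption \ref{hyp1}, which was used in Propositions \ref{prop1} and \ref{prop2}. The second is the multiplicativity of degrees along reduced words, which was used in the volume estimate Proposition \ref{prop5}. Each must be replaced by something that holds uniformly in the presence of the fixed map $f$.

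\textbf{Step 1: genericity.} I will adapt Theorem \ref{thm3}. For generic $(g_1,\dots,g_r)$, conjugated by generic elements of $\mathrm{PGL}_3$, I expect the following, proved with the resultant and Jacobian criteria as before.
\begin{enumerate}
\item $\Gamma$ is the free product $\langle f\rangle * \langle g_1\rangle * \cdots * \langle g_r\rangle$.
\item For a reduced word $\gamma=w_k\cdots w_1$, where each syllable $w_j$ is a power $f^{m}$ or a letter $g_i^{\pm1}$, we have $\deg\gamma=\prod\deg w_j$.
\item Every base point of $\gamma$ is either proper or, after moving by the generic $g$'s, lies in the image of a base point of some $f^{m}$. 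Its order is then at most $C$.
\end{enumerate}
The genericity argument says that the generic conjugations move the base points of the syllables away from each other, so no new infinitely near points are created. Each condition is the complement of a proper subvariety, and there are countably many words.

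\textbf{Step 2: no charge on curves.} Assumption \ref{hyp1} is replaced by the statement that the base points of $f^n_\omega$ have order at most $C$. In Proposition \ref{prop1}, the set $\mathcal{B}_\omega$ is still countable, and it projects to a countable subset of $\mathbb{P}^2$. In Lemma \ref{lem30}, $\sigma_n$ is now a composition of blow-ups whose centres have order at most $C$. Lelong numbers of pullbacks under a bounded number of successive blow-ups stay controlled, since each blow-up at most doubles the Lelong number. So the bound in Lemma \ref{lem30} still holds, and $T_{\mathbb{P}^2}$ does not charge curves.

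Proposition \ref{prop2} goes through unchanged. The equality $\sum\alpha_p^2=1$ together with Lemma \ref{lem7} and the induction on blow-ups never used properness except to write $\theta_\omega$ in terms of points in $\mathbb{P}^2$. I will rerun that argument with $\mathcal{B}(\mathbb{P}^2)$ in place of $\mathbb{P}^2$, doing the induction over points of order at most $C$ first.

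\textbf{Step 3: volume estimates.} Proposition \ref{prop5} needs $\|\log|\mathrm{Jac}\,\gamma|\|_{L^1}\le C\deg\gamma$. I will decompose $\gamma$ into syllables. For a syllable $f^m$, the Jacobian bound $\|\log|\mathrm{Jac} f^m|\|_{L^1}\le C\deg f^m$ follows from the single-map estimate of \cite{Guedj2}, since $\log|\mathrm{Jac} f^m|$ is a difference of $6\deg(f^m)\kappa$-psh functions. Summing over syllables uses multiplicativity across syllables. The sum is then dominated by a geometric series, as long as each syllable has degree at least $2$. This needs $\deg f\ge2$ and the $g_i$ of degree at least $2$. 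I also need $\deg f^m\ge c\lambda^m$, which follows from the bounded order assumption: $f$ is loxodromic or of bounded degree, and I will treat bounded degree as elliptic and harmless.

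The non-elementary and WPD properties come from the $g_i$ as before. After these three steps, Sections \ref{section4}--\ref{section6} apply verbatim.

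\textbf{Main obstacle.} I expect Step 1 to be the hardest, namely showing that generic $g_i$ do not create base points of unbounded order when composed with powers of $f$. I would first try to prove, via generic $\mathrm{PGL}_3$ conjugation as in the proof of Theorem \ref{thm3}, that the base points of a reduced word are contained in the union of the pullbacks of the base points of its syllables, with no collisions. If that fails, I would look for a ping-pong argument on $\mathbb{H}$.
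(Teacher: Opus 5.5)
Your route differs from the paper's. The paper asserts that for generic $g_i$ there is a single model $X$ on which all base points of all elements of $\Gamma$ are defined. Assumption \ref{hyp1} then holds relative to $X$, and Sections \ref{section4}--\ref{section6} are rerun verbatim with $X$ as base surface. You stay on $\mathbb{P}^2$ and only ask for bounded order. Your genericity claim is weaker and more plausible: a single finite $X$ is problematic once $f$ has an infinitely near base point, since the generic $g_i$ transport it over infinitely many distinct points of $\mathbb{P}^2$. Your adaptation of Lemma \ref{lem30} is fine: one blow-up at most doubles Lelong numbers, and chains of blown-up points have length at most $C+1$. But staying on $\mathbb{P}^2$ obliges you to reprove Propositions \ref{prop2} and \ref{prop5}, and both reproofs have gaps.

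\emph{Proposition \ref{prop2}: the rerun fails.} If $q$ has order $\geq 1$ and lives on a model $Y$, blowing it up gives $\alpha_q(\omega)\le\nu(T_Y,q)$, not $\alpha_q(\omega)\le\nu(\widetilde{T_{\mathbb{P}^2}},q)$. Write $T=\widetilde{T_{\mathbb{P}^2}}+\sum_p c_pE_p$ with $c_p=\nu(\widetilde{T_{\mathbb{P}^2}},p)-\alpha_p(\omega)$. Positivity in the models only makes the coefficients of strict transforms of exceptional curves nonnegative. For $q$ infinitely near a proper $p$ this means $c_p\ge 0$ and $c_p+c_q\ge 0$, so $c_q<0$ is not excluded. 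An induction on the order cannot start either, since $c_p=0$ at proper points already requires the full sum. The missing idea is a global argument:
\begin{itemize}
\item $D:=T-\widetilde{T_{\mathbb{P}^2}}$ is effective in every model by Siu's theorem.
\item $\theta_\omega$ is nef, so $\langle\theta_\omega,[D]\rangle=\sum_p\alpha_p(\omega)\,c_p\ge 0$.
\item By Cauchy--Schwarz and Lemma \ref{lem7},
\[
1=\sum_p\alpha_p(\omega)^2\le\sum_p\alpha_p(\omega)\,\nu(\widetilde{T_{\mathbb{P}^2}},p)\le\Bigl(\sum_p\nu(\widetilde{T_{\mathbb{P}^2}},p)^2\Bigr)^{1/2}\le 1 .
\]
\end{itemize}
Equality throughout forces $c_p=0$ for all $p$. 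This uses no properness, only that $T_{\mathbb{P}^2}$ does not charge curves.

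\emph{Proposition \ref{prop5}.} Writing $\log|\mathrm{Jac}(f^m)|$ as a difference of $6\deg(f^m)\kappa$-psh functions only controls the normalized parts. The real content, the lemma inside the proof of Proposition \ref{prop5}, is the bound $|c_\gamma|\le C\deg(\gamma)$ on the additive constant. That bound comes from the chain rule, at the cost of summing the degrees of the partial products. For a syllable $f^m$ this sum is $\sum_{j<m}\deg(f^j)$, which is $O(\deg f^m)$ only under exponential growth.

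Your claim that bounded order forces $f$ to be loxodromic or of bounded degree is false. Take a Halphen twist, i.e.\ an automorphism of a rational elliptic surface obtained by blowing up nine points of $\mathbb{P}^2$. All base points of all $f^n$ lie among these nine points, yet $\deg(f^n)\asymp n^2$, so $\sum_{j<m}\deg(f^j)\asymp m^3$. For such $f$ the Jacobians have to be estimated on the surface where $f$ is regular, which is what changing the base surface buys.

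The bounded-degree case is not automatically harmless either. For a unipotent linear $f$, $\deg(f^m)=1$, yet $\mathrm{vol}(f^m(B))\to 0$ for a fixed small ball $B$. So Proposition \ref{prop5} fails uniformly over $\Gamma$, and you would need an estimate that holds only along typical random words.
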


Indeed, in this case, we can prove that for $(g_1, \dots , g_r)$ generic, there exists a model $(X,\pi)$ on which all the base points of any element of the group $\Gamma=\langle f, g_1, \dots, g_r \rangle$ are defined on $X$. We can apply our proof where we choose $X$ as a base surface instead of $\mathbb{P}^2$.

This property is certainly not satisfied when $f$ is a loxodromic automorphism of the affine plane. Still, we have shown in \cite[Proposition 3.1]{N} that any limit current of normalized pullbacks of any Kähler form do not charge curves in any model when the measure $\mu $ is purely loxodromic. Moreover, using \cite[Proposition 2.7]{N}, we can prove volume estimates similar to Proposition \ref{prop5}. Thus, the method developed in the present paper applies also in this setting, and it gives an alternative way to prove \cite[Proposition 5.1]{N}.\\

Consider a rational map $f: \mathbb{P}^2 \dashrightarrow \mathbb{P}^2$ with small topological degree $\lambda_1>\lambda_2$ (or even with $\lambda_1^2>\lambda_2$). We can also define the action of $f$ by pullback on the space of Weil currents (and classes). In this setting, $f^*$ do not preserve the intersection form. Still, by a theorem of Boucksom--Favre--Jonsson \cite[Theorem 3.5]{BFJ}, we have convergence at the level of classes. The analogue of Proposition \ref{prop5} is also known. See for example \cite[Theorem 3.1]{Bay}. The hypothesis that $\deg(f^n) \leq C\lambda_1^n$ is implied by \cite[Main Theorem]{BFJ}. All other arguments of the proof can be adapted to the case of a rational map. Then, if we are able to prove that any limit of a subsequence of the normalized pullbacks of the Fubini-Study form do not charge curves in any model, we obtain the existence of the Green current.

\end{document}